\documentclass[12pt, reqno]{amsart}
\usepackage[margin=1in]{geometry}
\usepackage{amssymb,latexsym,amsmath,amscd,amsfonts}
\usepackage{latexsym}
\usepackage[mathscr]{eucal}
\usepackage{bm}
\usepackage{tabularx}
\usepackage{mathptmx}
\usepackage{amssymb}
\usepackage{amsthm}
\usepackage{dcolumn}
\usepackage[all]{xy}
\usepackage{enumitem}
\usepackage[utf8]{inputenc}
\usepackage{mathtools}

\def \qed {\hfill \vrule height6pt width 6pt depth 0pt}
\def\textmatrix#1&#2\\#3&#4\\{\bigl({#1 \atop #3}\ {#2 \atop #4}\bigr)}
\def\dispmatrix#1&#2\\#3&#4\\{\left({#1 \atop #3}\ {#2 \atop #4}\right)}
\newcommand{\beg}{\begin{equation}}
	\newcommand{\eeg}{\end{equation}}
\newcommand{\ben}{\begin{eqnarray*}}
	\newcommand{\een}{\end{eqnarray*}}

\newcommand{\A}{\mathbb{A}}
\newcommand{\CA}{\overline{\mathbb{A}}}
\newcommand{\HS}{\mathcal{H}}
\newcommand{\KS}{\mathcal{K}}

\newcommand{\C}{\mathbb{C}}
\newcommand{\T}{\mathbb{T}}
\newcommand{\Z}{\mathbb{Z}}
\newcommand{\D}{\mathbb{D}}
\newtheorem{thm}{Theorem}[section]
\newtheorem{cor}[thm]{Corollary}

\newtheorem{prop}[thm]{Proposition}
\numberwithin{equation}{section} \theoremstyle{definition}
\newtheorem{defn}[thm]{Definition}

\newtheorem{eg}[thm]{Example}

\def\textmatrix#1&#2\\#3&#4\\{\bigl({#1 \atop #3}\ {#2 \atop #4}\bigr)}
\def\dispmatrix#1&#2\\#3&#4\\{\left({#1 \atop #3}\ {#2 \atop #4}\right)}
\begin{document}
	\title{Regular quantum annulus unitary dilation and applications}
	\author{NITIN TOMAR}

	\address[Nitin Tomar]{Mathematics Department, Indian Institute of Technology Bombay, Powai, Mumbai-400076, India.} \email{tomarnitin414@gmail.com, tnitin@math.iitb.ac.in}		
	
	\keywords{Quantum annulus, regular dilation, doubly commuting operators, complete $K$-spectral set}	
	
	\subjclass[2020]{47A20, 47A63}	
	
	\begin{abstract}
		Consider the annulus $\mathbb{A}_r=\{z \in \C: r^{-1} <|z|<r\}$ for $r>1$ and define
		\[
		Q\mathbb{A}_r=\{T:\ T \text{ is an invertible operator and } \ \|T\|, \|T^{-1}\|\leq r\}.
		\]
		The collection $Q\mathbb{A}_r$ is called the \textit{quantum annulus}. McCullough and Pascoe proved that $T \in Q\A_r$ if and only if $\beta(T^*, T)=(r^2+r^{-2})-T^*T-(T^*T)^{-1} \geq 0$. We call an invertible operator $T$ a quantum annulus unitary if $\beta(T^*, T)=0$. In this article, we present an explicit construction of a doubly commuting $d$-tuple of quantum annulus unitaries that simultaneously extends a given doubly commuting $d$-tuple of operators in $Q\mathbb{A}_r$. We introduce the notion of a regular quantum annulus unitary dilation and prove that the dilation arising from our construction is a regular one. Examples are presented to illustrate the notion of regular quantum annulus unitary dilation. As an application of the dilation theorem, we show that $\overline{\mathbb A}_r$ is a complete $K_t$-spectral set for operators in $Q\A_r$ and $\CA_r^d$ is a complete $K_{dc}^{(d)}$-spectral set for doubly commuting $d$-tuples of operators in $Q\A_r$, where
		\[
		K_t=2\left(1+\frac{2r^2}{(r^2+1)\sqrt{r^4-1}}\right) \quad \text{and} \quad 
		 K_{dc}^{(d)}=\left[2\left(1+\frac{2r^2}{(r^2+1)\sqrt{r^4-1}}\right)\right]^d.
		 \] 
		 We also prove that every doubly commuting tuple of operators in $Q\A_r$ is similar to a commuting tuple of operators having $\overline{\mathbb{A}}_r^d$ as a complete spectral set. Let $K_{dc}(\CA_r^d)$ and $K_{dc}^{\mathrm{full}}(\CA_r^d)$ be the smallest constants for which $\CA_r^d$ is a $K_{dc}(\CA_r^d)$-spectral set and a complete $K_{dc}^{\mathrm{full}}(\CA_r^d)$-spectral set, respectively, for every doubly commuting $d$-tuple of operators in $Q\A_r$. Then the resulting bounds are given by 
		 \[
		 2^d \leq K_{dc}(\CA_r^d) \leq K_{dc}^{\text{full}}(\CA_r^d) \leq K_{dc}^{(d)},
		 \] 
which further imply that  $\underset{r \to \infty}{\lim} \ K_{dc}(\CA_r^d)=\underset{r \to \infty}{\lim} \ K_{dc}^{\text{full}}(\CA_r^d)=2^d$.	En route, we provide an alternative characterization of operators in $Q\A_r$ and quantum annulus unitaries. We also prove that $\overline{\mathbb{A}}_r^d$ is a $K$-spectral set for a certain subclass of commuting $d$-tuples of operators in $Q\A_r$.

	\end{abstract}	
	
	\maketitle
	
	\noindent 
	
	\section{Introduction}\label{sec_intro}
	
	\vspace{0.1cm}
	
	\noindent Throughout the paper, all operators are bounded linear maps defined on complex Hilbert spaces. A contraction is an operator with norm at most $1$. We shall use the following notations: $\mathbb{C}$ denotes the complex plane, $\mathbb{D}$ is the open unit disc $\{z:|z|<1\}$, and $\mathbb{T}$ is the unit circle $\{z:|z|=1\}$. For a compact set $X\subseteq\mathbb{C}^d$, we denote by $\text{Rat}(X)$ the algebra of rational functions with singularities outside $X$, equipped with the supremum norm $\|\cdot\|_{\infty,X}$. For $K>0$, we say that a compact set $X\subseteq\mathbb{C}^d$ is a \textit{$K$-spectral set} for a commuting operator tuple $\underline{T}=(T_1,\dots,T_d)$ if its Taylor joint spectrum $\sigma_T(\underline{T})$ is contained in $X$ and for all $f\in\text{Rat}(X)$,
	\begin{align}\label{eqn_spec}
		\|f(\underline{T})\|\leq K\|f\|_{\infty,X}.
	\end{align}
	Such a constant $K$ is called a \textit{spectral constant}. If \eqref{eqn_spec} holds for all matricial functions, that is, 
	\[
	\|[f_{ij}(\underline{T})]_{i, j =1}^n\|\leq K\|[f_{ij}]_{i, j=1}^n\|_{\infty, X}=K \cdot \sup\left\{ \|[f_{ij}(x)]_{i, j=1}^n\| : x \in X \right\}
	\] 
	for all $n \in \mathbb{N}$ and $[f_{ij}]$ in $M_n(\text{Rat}(X))$, then $X$ is called a \textit{complete $K$-spectral set} for $\underline{T}$. The set $X$ is called a \textit{spectral set} (respectively, a \textit{complete spectral set}) for $\underline{T}$ if it is a $K$-spectral set (respectively, a complete $K$-spectral set) with $K=1$. For $r>1$, consider the annulus 
	\[
	\mathbb{A}_r=\{z \in \mathbb{C}: r^{-1}<|z|<r\}.
	\] 
Let $Q\mathbb{A}_r$ be the collection of all invertible operators $T$ that satisfies $\|T\|, \|T^{-1}\| \leq r$. The class $Q\A_r$ is known as the \textit{quantum annulus}. A fundamental characterization of operators in $Q\mathbb{A}_r$ was obtained by McCullough and Pascoe in \cite{Pas-McCull}. For an invertible operator $T$, let us define
\[
\beta(T^*, T):= (r^2+r^{-2})-T^*T-(T^*T)^{-1}.
\] 
It was proved in \cite{Pas-McCull} that an invertible operator $T \in Q\mathbb{A}_r$ if and only if $\beta(T^*, T)\geq 0$. They further established a dilation theorem for operators in $Q\A_r$, which states that for every $T \in Q\A_r$ acting on a Hilbert space $\HS$, there exists an invertible operator $J$ acting on a Hilbert space $\KS \supseteq \HS$ such that $\beta(J^*, J)=0$ and $T^n=P_{\HS}J^n|_{\HS}$ for every $n \in \Z$. Here, $P_\HS$ denotes the orthogonal projection of $\KS$ onto $\HS$. An invertible operator $J$ satisfying $\beta(J^*, J) = 0$ is referred to as a \textit{quantum annulus unitary}. An explicit construction of such a dilation was obtained in \cite{Pal_Pascoe}.

\begin{thm}[\cite{Pal_Pascoe}, Theorem 2.1]\label{thm_01}
	Every operator $T \in Q\A_r$ extends to a quantum annulus unitary 
	\[
	J=\begin{bmatrix}
		T & T(T^*T)^{-1\slash 2}\beta(T^*, T)^{1\slash 2}\\
		0 & T^{-*}
	\end{bmatrix}.
	\]
\end{thm}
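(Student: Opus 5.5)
The plan is to check directly that the block operator $J$ on $\HS\oplus\HS$ has the three required properties: it is invertible, it satisfies $\beta(J^*,J)=0$, and it extends $T$. Extension is immediate, since $J$ is upper triangular with $(1,1)$ entry $T$, so $\HS\oplus 0$ is invariant and $J|_{\HS}=T$. The entry $\beta(T^*,T)^{1/2}$ makes sense because $\beta(T^*,T)\geq 0$ by the McCullough--Pascoe characterization of $Q\A_r$.

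For invertibility, note that $J$ is upper triangular with invertible diagonal entries $T$ and $T^{-*}$. Hence $J^{-1}$ exists and is given by the usual upper-triangular inverse formula.

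For the identity $\beta(J^*,J)=0$, set $P=T^*T$, $c=r^2+r^{-2}$, $B=\beta(T^*,T)^{1/2}$ and $X=TP^{-1/2}B$. The key observation is that $B=(cI-P-P^{-1})^{1/2}$ is a continuous function of $P$. Therefore $P$, $P^{\pm1/2}$ and $B$ all lie in one commutative $C^*$-algebra. Computing $J^*J$ blockwise:
\begin{itemize}
\item $T^*X=P\,P^{-1/2}B=P^{1/2}B$;
\item $X^*X=BP^{-1/2}PP^{-1/2}B=B^2=cI-P-P^{-1}$;
\item $(T^{-*})^*T^{-*}=T^{-1}T^{-*}=P^{-1}$.
\end{itemize}
This gives
\[
M:=J^*J=\begin{bmatrix} P & P^{1/2}B\\ P^{1/2}B & cI-P\end{bmatrix},
\]
a $2\times 2$ matrix whose entries lie in a commutative algebra.

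We then treat $M$ as a matrix over that commutative algebra. Its trace is $cI$. Its determinant is
\[
P(cI-P)-PB^2=P(cI-P)-P(cI-P-P^{-1})=I.
\]
The Cayley--Hamilton theorem holds for matrices over a commutative ring, so $M^2-cM+I=0$. Since $M=J^*J$ is invertible, multiplying by $M^{-1}$ gives $M+M^{-1}=cI$, which is exactly $\beta(J^*,J)=0$.

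The only delicate point is the commutativity needed to apply Cayley--Hamilton (equivalently, to multiply out $M^2$ entrywise). It holds because every entry of $M$ is a function of the single positive invertible operator $P$.
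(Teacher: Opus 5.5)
Your proof is correct and is essentially the same verification the paper carries out, in tensored form, in the proof of Theorem \ref{thm_02}; the paper itself only cites this result from \cite{Pal_Pascoe}. The paper likewise computes $J^*J=\begin{bmatrix} P & P^{1/2}B\\ BP^{1/2} & cI-P\end{bmatrix}$, simply writes down $(J^*J)^{-1}=cI-J^*J$ and adds the two, whereas your Cayley--Hamilton argument over the commutative $C^*$-algebra generated by $P$ (trace $cI$, determinant $I$) justifies that same inverse formula.
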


The dilation space in Theorem \ref{thm_01} is simply $\HS \oplus \HS$. The dilation obtained in \cite{Pas-McCull} was different. If $\sigma((T^*T)^{1/2})$ is a finite subset of the open interval $(r^{-1},r)$, the dilation acts on $L^2(\mathbb{T})\otimes \HS$, whereas in the general case the dilation space is obtained through Stinespring's dilation theorem. Also, a dilation theorem for doubly commuting tuples of operators in $Q\A_r$ was proved in \cite{Tomar}. Recall that a commuting tuple of invertible operators $(T_1, \dotsc, T_d)$ on a Hilbert space $\HS$ is said to admit a dilation to a tuple of  invertible operators $(J_1, \dotsc, J_d)$  on a Hilbert space $\KS \supseteq \HS$ if
\[
T_1^{m_1}\dotsc T_d^{m_d}=P_\HS J_1^{m_1}\dotsc J_d^{m_d}|_\HS
\] 
for all integers $m_1, \dotsc, m_d$. We now state the following multivariable dilation theorem from \cite{Tomar}.

\begin{thm}[\cite{Tomar}, Theorem 1.3]\label{thm_mainQAr}
	Let $\underline{T}=(T_1, \dotsc, T_d)$ be a doubly commuting tuple of invertible operators acting on a Hilbert space $\HS$. Then $T_1, \dotsc, T_d \in Q\A_r$ if and only if $\underline{T}$ admits a dilation to a doubly commuting $d$-tuple of quantum annulus unitaries.
\end{thm}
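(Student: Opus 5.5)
The necessity direction is the easier half, so I would settle it first. Suppose $\underline{T}$ dilates to a doubly commuting tuple $(J_1,\dots,J_d)$ of quantum annulus unitaries on $\KS\supseteq\HS$. Fix $i$ and apply the one–variable result of McCullough and Pascoe to $J_i$. The condition $\beta(J_i^*,J_i)=0$ says that $J_i^*J_i$ is a positive invertible operator $X$ with $X+X^{-1}=r^2+r^{-2}$. By the spectral theorem, $\sigma(X)\subseteq\{r^2,r^{-2}\}$, so $\|J_i\|,\|J_i^{-1}\|\le r$. Compressing gives $T_i^{\pm1}=P_\HS J_i^{\pm1}|_\HS$, hence $\|T_i\|\le\|J_i\|\le r$ and $\|T_i^{-1}\|\le r$. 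Therefore $T_i\in Q\A_r$.

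For sufficiency, I would build the dilation inductively using the explicit $2\times 2$ extension of Theorem \ref{thm_01}, done one coordinate at a time. Set $\HS_0=\HS$. Extend $T_1$ to
\[
J_1=\begin{bmatrix} T_1 & T_1(T_1^*T_1)^{-1/2}\beta(T_1^*,T_1)^{1/2}\\ 0 & T_1^{-*}\end{bmatrix}
\]
on $\HS\oplus\HS$. For $j\ge 2$, extend $T_j$ to $T_j\oplus T_j$. Since each $T_j$ doubly commutes with $T_1$, it commutes with $T_1$, $T_1^*$, $T_1^{-*}$, and with the continuous functions $(T_1^*T_1)^{-1/2}$ and $\beta(T_1^*,T_1)^{1/2}$. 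The same holds for $T_j^*$. So $T_j\oplus T_j$ doubly commutes with $J_1$, and the new tuple on $\HS\oplus\HS$ again consists of doubly commuting operators. Its entries $T_j\oplus T_j$ remain in $Q\A_r$, and $J_1$ is a quantum annulus unitary. Iterating, at step $k$ I replace the current $k$-th operator by its Theorem \ref{thm_01} extension and replace the others $A$ by $A\oplus A$. After $d$ steps I obtain $J_1,\dots,J_d$ on $\HS^{2^d}$ that are doubly commuting quantum annulus unitaries.

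I also need to check that a direct sum $J\oplus J$ of a quantum annulus unitary is still one. This holds because $\beta$ acts entrywise on direct sums.

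It remains to check the dilation identity for all integer exponents. Each $J_i$ is upper triangular with respect to the decomposition $\HS\oplus\HS^{\perp}$ at every stage, because $\HS$ sits as the first summand and is invariant. Hence $\HS$ is invariant under every $J_i$. It is also invariant under $J_i^{-1}$: the inverse of the block upper triangular invertible operator $\begin{bmatrix}T&X\\0&T^{-*}\end{bmatrix}$ is upper triangular with diagonal $T^{-1}, T^*$. Therefore $J_1^{m_1}\cdots J_d^{m_d}|_\HS=T_1^{m_1}\cdots T_d^{m_d}$ for all $m_i\in\Z$, which gives an extension and in particular a dilation.

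The main obstacle is the bookkeeping in the inductive step. I must verify that double commutativity is preserved, meaning that $T_j\oplus T_j$ commutes with the adjoint of the off-diagonal block. This reduces to Fuglede-type commutation of $T_j$ with functions of $T_1^*T_1$, which follows from the continuous functional calculus since $T_j$ commutes with both $T_1$ and $T_1^*$. I also need the analogous statement at later stages, where the off-diagonal block involves $T_k\oplus\cdots$.
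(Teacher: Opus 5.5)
Your proposal is correct and follows essentially the paper's route. The paper only cites \cite{Tomar} for this statement, but its Theorem \ref{thm_gen} gives the sufficiency direction by exactly your construction: your iterated doubling $B\mapsto B\oplus B$, $B_k\mapsto\Lambda(B_k)$ produces the operators $I^{\otimes(i-1)}\otimes\Lambda_i\otimes I^{\otimes(d-i)}$ on $\HS^{2^d}$ up to a reordering of the $\C^2$ factors, and you verify them with the same three checks (entrywise commutation via the functional calculus of $T_j^*T_j$, $\beta$ acting factorwise, and block upper triangularity of $J_i^{\pm1}$), while your necessity argument via $\sigma(J_i^*J_i)\subseteq\{r^{-2},r^2\}$ and compression is the standard one and is fine.
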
 	  

While Theorem \ref{thm_mainQAr} guarantees the existence of a dilation to a doubly commuting tuple of quantum annulus unitaries, the proof does not provide an explicit construction of the dilating tuple. Indeed, the dilation obtained in \cite{Tomar} acts on the space $L^2(\T)\otimes \HS$ when each $\sigma((T_j^*T_j)^{1/2})$ is a finite subset of the open interval $(r^{-1},r)$ and in the general case, the dilation space is obtained using Stinespring's dilation theorem. It is therefore natural to ask whether the dilation in Theorem \ref{thm_mainQAr} admits an explicit construction analogous to the one-variable construction in Theorem \ref{thm_01}. One of the main objectives of the present article is to answer this question. In fact, we refine Theorem \ref{thm_mainQAr} in several significant ways in Section \ref{sec_03}. First, we provide an explicit construction of a doubly commuting tuple of quantum annulus unitaries dilating a given doubly commuting tuple $(T_1, \dotsc, T_d)$ of operators in $Q\A_r$ acting on a Hilbert space $\HS$. Second, the resulting dilation acts on the space
\[
\underbrace{\HS \oplus \dotsb \oplus \HS}_{2^d \text{-times}},
\]
which is substantially smaller and more concrete than the dilation space arising in \cite{Tomar}. Furthermore, we show that every doubly commuting tuple of operators in $Q\A_r$ admits a simultaneous extension, and hence a dilation, to a doubly commuting tuple of quantum annulus unitaries. In fact, our construction gives more than a simultaneous extension. To explain this point, we recall the classical notion of regular unitary dilation of doubly commuting contractions from \cite{Brehmer, NagyFoias6}. Given a commuting tuple $\underline{T}=(T_1,\dots, T_d)$ of contractions acting on a Hilbert space $\HS$, we say that $\underline{T}$ admits a \textit{regular unitary dilation} if there exists a commuting tuple $(U_1,\dots,U_d)$ of unitaries on a Hilbert space $\KS \supseteq \HS$ such that
\begin{equation}\label{eqn_1001}
	P_{\HS}
	\left(
	\prod_{i\in S} U_i^{*m_i}
	\prod_{i\notin S} U_i^{m_i}
	\right)\Big|_{\HS}
	=
	\prod_{i\in S} T_i^{*m_i}
	\prod_{i\notin S} T_i^{m_i}
\end{equation}
for every subset $S\subseteq \{1,\dotsc,d\}$ and non-negative integers $m_1,\dotsc,m_d$. It is well-known that every doubly commuting tuple of contractions admits a regular unitary dilation, see, e.g., \cite{NagyFoias6} for further details. For operators in $Q\mathbb{A}_r$, it is natural to allow arbitrary integers in \eqref{eqn_1001} instead of only non-negative integers. Our construction shows that the dilating tuple of quantum annulus unitaries for doubly commuting tuple of operators in $Q\A_r$ satisfies the equality as in \eqref{eqn_1001} for integers $m_1, \dotsc, m_d$. We call such a dilation a regular quantum annulus unitary dilation. 

\smallskip 

Dilation theory has played an important role in the study of spectral constant estimates for operators in $Q\A_r$. This connection is particularly evident in the work of \cite{Pal_Pascoe}, where the dilation theorem was used to obtain an alternative proof of spectral constant estimate from \cite{Pascoe}. Motivated by this interplay, we study the classical problem of determining optimal spectral constant
\[
K(\CA_r)=\inf\{K:\ \CA_r \text{ is a } K\text{-spectral set for every } T\in Q\A_r\}.
\] 
This problem has attracted considerable attention, e.g., see \cite{Badea, CrouI, CrouII, Pal_Pascoe, Pascoe, Paulsen, Shields, TsikalasII}
and the references therein. The best known lower bound is due to Tsikalas \cite{TsikalasII} who proved that $K(\CA_r) \geq 2$, improving upon earlier bounds obtained in \cite{Badea}. On the other hand, Crouzeix and Greenbaum \cite{CrouII} showed that $K(\CA_r)\le 1+\sqrt{2}$. Pascoe \cite{Pascoe} proved that $	K(\CA_r)\leq K_p$, where
\begin{equation}\label{eqn_ub}
K_p=
	2\left[1+2r^2(r^4-1)^{-1}\right].
\end{equation}
An alternative proof of this estimate was subsequently provided by the authors of \cite{Pal_Pascoe}. Note that the bound $K_p$ is strictly smaller than $1+\sqrt{2}$ for $r \geq 4$. The estimate in \eqref{eqn_ub} is also important from an asymptotic viewpoint as it converges to $2$ as $r\to \infty$, matching the lower bound by Tsikalas \cite{TsikalasII}. Thus, $K(\CA_r) \to 2$ as $r \to \infty$. Recently, Crouzeix \cite{CrouIII} obtained a sharper upper bound for finite-dimensional quantum annulus operators satisfying $\|T\|,\|T^{-1}\|<r$. Motivated by these results, one may ask for analogous estimates in the multivariable setting. The spectral set estimates for doubly commuting tuples of operators in $Q\A_r$ were studied by the authors of \cite{Pal_Pascoe}. They proved that $\CA_r^d$ is a $K_{dc}$-spectral set for doubly commuting $d$-tuples of operators in $Q\A_r$, where
\[
K_{dc}=\left[(3r^2-1)\slash (r^2-1)\right]^d.
\]
Moreover, if $K_{dc}(\CA_r^d)$ denotes the smallest constant for which $\CA_r^d$ is a $K_{dc}(\CA_r^d)$-spectral set for all doubly commuting $d$-tuples in $Q\A_r$, then
$2^d \leq K_{dc}(\CA_r^d)\leq K_{dc}$ and $2^d\leq
\underset{r\to\infty}{\lim}K_{dc}(\CA_r^d)
\leq 3^d$. Although these estimates establish the existence of a uniform spectral constant in the doubly commuting setting, the upper bound is considerably larger than its one-variable counterpart. Indeed,
\[
\lim_{r \to \infty}\left[(3r^2-1)\slash (r^2-1)\right]=3 \quad \text{and}
\quad 
\lim_{r\to \infty}	2\left[1+2r^2(r^4-1)^{-1}\right]=2.
\]
This suggests that the bound $\left[(3r^2-1)\slash (r^2-1)\right]^d$ is not optimal and naturally leads to the problem of determining sharper multivariable spectral constant estimates. The alternative proof of Pascoe's estimate in \cite{Pal_Pascoe} capitalizes on the dilation theorem for $Q\A_r$. Thus, it is natural to investigate whether the dilation theorem for doubly commuting tuples of quantum annulus operators can be used to obtain sharper spectral constant estimates in the multivariable setting. To begin with, we refine in Section \ref{sec_02} the spectral constant $K_p$ as in \eqref{eqn_ub} for operators in $Q\A_r$. We show that the constant $K_p$ can be improved to  
\begin{equation}\label{eqn_Kt}
K_t = 2\left(1+\frac{2r^2}{(r^2+1)\sqrt{r^4-1}}\right).
\end{equation}
Clearly, $K_t \lneq K_p$ for all $r>1$. Moreover, we prove that $\CA_r$ is a complete $K_t$-spectral set for every operator $T\in Q\A_r$. Let $K(\CA_r)$ and $K^{\text{full}}(\CA_r)$ be the smallest constants such that $\CA_r$ is a $K(\CA_r)$-spectral set and a complete $K^{\text{full}}(\CA_r)$-spectral set, respectively, for every operator in $Q\A_r$. Then 
\[
2 \leq K(\CA_r) \leq K^{\text{full}}(\CA_r) \leq K_t \quad \text{and} \quad \lim_{r \to \infty}K(\CA_r)=\lim_{r \to \infty}K^{\text{full}}(\CA_r)=2.
\]
Also, we provide an alternative characterization for operators in $Q\A_r$. For an invertible operator $T$, we consider $\Phi_r(T)=r(1+r^2)^{-1}(T+T^{-*})$. We prove that $T \in Q\A_r$ if and only if $\|\Phi_r(T)\| \leq 1$, and $T$ is a quantum annulus unitary if and only if $\Phi_r(T)$ is a unitary. This gives a relationship between $Q\A_r$ and the class of contractions. We present in Section \ref{sec_04} estimates for the optimal spectral and complete spectral constants associated with doubly commuting tuples of operators in $Q\A_r$. We show that $\CA_r^d$ is a complete $K_{dc}^{(d)}$-spectral set for such tuples, where
\[
K_{dc}^{(d)}=\left[2\left(1+\frac{2r^2}{(r^2+1)\sqrt{r^4-1}}\right)\right]^d.
\]
Moreover, we prove in Theorem \ref{thm_sim} that every doubly commuting tuple of operators in $Q\A_r$ is similar to a commuting tuple of operators having $\CA_r^d$ as a complete spectral set. Let $K_{dc}(\CA_r^d)$ and $K_{dc}^{\mathrm{full}}(\CA_r^d)$ be the smallest constants such that $\CA_r^d$ is a $K_{dc}(\CA_r^d)$-spectral set and a complete $K_{dc}^{\mathrm{full}}(\CA_r^d)$-spectral set, respectively, for doubly commuting $d$-tuples of operators in $Q\A_r$. For these optimal constants, it is proved that $2^d \le K_{dc}(\CA_r^d) \le K_{dc}^{\mathrm{full}}(\CA_r^d)
	\leq K_{dc}^{(d)}$, which in turn implies that
\[
	\underset{r\to\infty}{\lim} K_{dc}(\CA_r^d)
	=
	\underset{r\to\infty}{\lim} K_{dc}^{\mathrm{full}}(\CA_r^d)
	=
	2^d.
	\]
In Section \ref{sec_05}, we study commuting tuples of operators in $Q\A_r$. Unlike the doubly commuting case, the methods developed in Section \ref{sec_04} do not directly apply to arbitrary commuting tuples. Moreover, even in the classical setting of commuting contractions, it remains unknown whether von Neumann's inequality for commuting triples holds up to a universal constant, e.g., see \cite{Hartz, Paulsen}. This leads us to consider the class  $Q\A_r^{\text{von}}$ consisting of all commuting tuples $(T_1, \dotsc, T_d)$ of operators in $Q\A_r$ such that 
\[
\|f(T_1^{\mu(1)}, \dotsc, T_d^{\mu(d)})\| \leq \|f\|_{\infty, (r\overline{\D})^d}
\]
for every holomorphic function $f$ on $(r\overline{\D})^d$ and every map $\mu: \{1, \dotsc, d\} \to \{1, -1\}$. We prove in Theorem \ref{thm_502} that $\CA_r^d$ is a $K(r, d)$-spectral set for $d$-tuples in $Q\A_r^{\text{von}}$, where 
\[
K(r, d)=\left(2+\frac{1+r^2}{\sqrt{r^4-1}}\right)^d.
\]

\section{Regular dilation of doubly commuting operators in $Q\A_r$}\label{sec_03}
	
	\vspace{0.1cm}
	
	\noindent In this section, we introduce the notion of regular quantum annulus unitary dilation and explicitly construct such a dilation for doubly commuting tuples of operators in $Q\mathbb{A}_r$.
	
	\begin{defn} 
		Let $\underline{T}=(T_1,\dotsc,T_d)$ be a commuting tuple of
		operators in $Q\mathbb{A}_r$ acting on a Hilbert space $\HS$.
		We say that $\underline{T}$ admits a \textit{regular quantum annulus unitary
			dilation} if there exist a Hilbert space $\KS \supseteq \HS$
		and a commuting tuple
		$(J_1,\dotsc,J_d)$ of quantum annulus unitaries
		on $\KS$ such that
		\[
		P_{\HS}
		\left(
		\prod_{i\in S} J_i^{*m_i}
		\prod_{i\notin S} J_i^{m_i}
		\right)\Big|_{\HS}
		=
		\prod_{i\in S} T_i^{*m_i}
		\prod_{i\notin S} T_i^{m_i}
		\]
		for every subset $S\subseteq \{1,\dotsc,d\}$ and every
		$m_1,\dotsc,m_d\in\mathbb{Z}$. 
	\end{defn} 
	
	We now present an example of a commuting pair (which is not doubly commuting) of operators in $Q\A_r$ that admits a regular quantum annulus unitary dilation. The primary idea of this example came from the $L^p$ and $H^p$ spaces of annulus that appeared in Sarason's seminal work \cite{SarasonI}. 
	
	\begin{eg}\label{eg1}
		Consider the space
		$
		L^2(\partial\mathbb{A}_r)=L^2(r^{-1}\mathbb{T})\oplus L^2(r\mathbb{T}),
		$
		where $L^2(r^{-1}\mathbb{T})$ and $L^2(r\mathbb{T})$ are endowed with normalized Lebesgue measure and the norm on this space is given by
		\begin{equation*}
			\|f\|^2=\frac{1}{2\pi}\overset{2\pi}{\underset{0}{\int}}|f(r^{-1}e^{it})|^2dt+\frac{1}{2\pi}\overset{2\pi}{\underset{0}{\int}}|f(re^{it})|^2dt.    
		\end{equation*}
		For $0 < \alpha < 1$ and $n \in \Z$, define a map $w_{\alpha+n}$ on $\partial \mathbb{A}_r$ as
		$
		w_{\alpha+n}(r^{-1}e^{it})=r^{-(\alpha+n)}e^{i(\alpha+n) t}$ and $  w_{\alpha+n}(re^{it})=r^{\alpha+n} e^{i(\alpha+n) t}$ with $0< t \leq 2 \pi$. It is easy to see that $\{w_{\alpha+n} :n \in \Z \}$ consists of mutually orthogonal maps in $L^2(\partial \mathbb{A}_r)$. Fix $\alpha \in (0, 1)$. Define a closed subspace of $L^2(\partial \A_r)$ as
		\[
		H_{\alpha}^2(\partial \mathbb{A}_r)=\overline{\text{span}}\{w_{\alpha+n} \ : \ n \in \Z\}.
		\] 
		Consider the map $A:H_{\alpha}^2(\partial \mathbb{A}_r) \to H_{\alpha}^2(\partial \mathbb{A}_r)$ given by $Aw_{\alpha+n}=w_{\alpha+n+1}$. Clearly, $A$ is invertible and $A^{-1}w_{\alpha+n}=w_{\alpha+n-1}$. Note that
		$\langle A^*(w_{\alpha+n}), w_{\alpha+m}\rangle =\langle w_{\alpha+n}, Aw_{\alpha+m}\rangle
		=\langle w_{\alpha+n}, w_{\alpha+m+1}\rangle$. Hence,
		\[ 
		\langle A^*(w_{\alpha+n}), w_{\alpha+m}\rangle= \left\{
		\begin{array}{ll}
			0 & n \ne m+1 \\
			r^{-2(\alpha+n)}+r^{2(\alpha+n)} & n=m+1 \; ,\\
		\end{array} 
		\right. 
		\]    
		which implies that $A^*(w_{\alpha+n})$ is orthogonal to $\overline{\text{span}}\{w_{\alpha+m} : m \in \mathbb{Z}, \ n\ne m+1\}$. Therefore, $A^*(w_{\alpha+n})$ is in $\mbox{span}\{w_{\alpha+n-1}\}$ and so,   $A^*(w_{\alpha+n})=r(\alpha;n)w_{\alpha+n-1}$ for some $r(\alpha;n) \in \C$. Note that
		\[
		r^{-2(\alpha+n)}+r^{2(\alpha+n)}= \langle A^*(w_{\alpha+n}), w_{\alpha+n-1}\rangle = r(\alpha;n)\|w_{\alpha+n-1}\|^2
		=r(\alpha;n)(r^{-2(\alpha+n-1)}+r^{2(\alpha+n-1)})
		\]	
		and thus, 
		\begin{equation*}
			r(\alpha;n)= \frac{r^{-2(\alpha+n)}+r^{2(\alpha+n)}}{r^{-2(\alpha+n-1)}+r^{2(\alpha+n-1)}}
		\end{equation*}
		for every $n \in \Z$. Also, $A$ is not a normal operator. To see this, note that $AA^*w_{\alpha+n}=r(\alpha; n)w_{\alpha+n}$ and $A^*Aw_{\alpha+n}=r(\alpha; n+1)w_{\alpha+n}$ for every $n \in \Z$. For $A$ to become normal, it is necessary and sufficient that $r(\alpha; n)=r(\alpha; n+1)$ for every $n \in \Z$, which is not possible since $r>1$. We now construct a quantum annulus unitary dilation of $A$, which in particular shows that $A \in Q\A_r$. Consider an operator $S: L^2(\partial\mathbb{A}_r) \to L^2(\partial\mathbb{A}_r)$ defined by
		$
		S(f)(z)=zf(z).
		$
		Then 
		\begin{align*}
			Sf&=g_1 \oplus g_2, \quad \text{where} \quad g_1(r^{-1}e^{it})=r^{-1}e^{it}f_1(r^{-1}e^{it}) \quad \text{and}  \quad g_2(re^{it})=re^{it}f_2(re^{it});\\
			S^{*}f&=k_1 \oplus k_2, \ \quad \text{where} \quad  k_1(r^{-1}e^{it})=r^{-1}e^{-it}f_1(r^{-1}e^{it}) \ \   \text{and} \quad  k_2(re^{it})=re^{-it}f_2(re^{it})
		\end{align*}
		for every $f=f_1\oplus f_2 \in L^2(r^{-1}\mathbb{T})\oplus L^2(r\mathbb{T})$. Note that $S$ is a quantum annulus unitary. To see this, let
		$f=f_1\oplus f_2 \in L^2(r^{-1}\mathbb{T})\oplus L^2(r\mathbb{T})$. Then
		\begin{align*}
			\beta(S^*, S)f
			&=(r^2+r^{-2})f-S^*Sf-(S^*S)^{-1}f\\
			&=
			(r^2+r^{-2})I_{L^2(\partial \A_r)}
			-
			\left(
			r^{-2}I_{L^2(r^{-1}\mathbb{T})}
			\oplus
			r^2I_{L^2(r\mathbb{T})}
			\right)-
			\left(
			r^2I_{L^2(r^{-1}\mathbb{T})}
			\oplus
			r^{-2}I_{L^2(r\mathbb{T})}
			\right)\\
			&=0.
		\end{align*}
		Clearly, $Sw_\alpha=w_{\alpha+1}$ and $S^{-1}w_\alpha=w_{\alpha-1}$. Consequently, $S^m=A^m|_{H^2_\alpha(\partial \A_r)}$ for every $m \in \Z$. Putting everything together, $A \in Q\A_r$ and $S$ is a quantum annulus unitary dilation of $A$. Define a commuting pair $(T_1, T_2)=(A, -A)$ of operators in $Q\A_r$ acting on $H^2_\alpha(\partial \A_r)$. Since $A$ is not normal, $(T_1, T_2)$ is not doubly commuting. Evidently, $(T_1, T_2)$ admits simultaneous extension, hence dilation, to the commuting pair $(J_1, J_2)=(S, -S)$ of quantum annulus unitaries acting on $L^2(\partial \A_r)$. Furthermore, $(J_1, J_2)$ is a regular quantum annulus unitary dilation of $(T_1, T_2)$. \qed 
	\end{eg}
	
	Next, we give an example of a commuting pair of operators in $Q\mathbb{A}_r$ which admits dilation to a commuting pair of quantum annulus unitaries that is not regular.
	
	\begin{eg}
		We continue with the notation of Example \ref{eg1}. Fix $\alpha \in (0, 1)$. Consider the commuting pairs $(M_1, M_2)=(A^*, -A^*)$ and $(N_1, N_2)=(S^*, -S^*)$ of operators in $Q\A_r$ acting on $H^2_\alpha(\partial \A_r)$ and $L^2(\partial \A_r)$, respectively. Since $\beta(S^*, S)=0$, it follows that
		\[
		\beta(S, S^*)=(r^2+r^{-2})-SS^*-(SS^*)^{-1}=S\left((r^2+r^{-2})-S^*S-(S^*S)^{-1}\right)S^{-1}=S\beta(S^*, S)S^{-1}=0
		\]
		and so, $(N_1, N_2)$ is a commuting pair of quantum annulus unitaries. By Example \ref{eg1}, we have that
		\[
		P_{H^2_\alpha(\partial \A_r)}N_1^{n}N_2^{m}|_{H^2_\alpha(\partial \A_r)}=(-1)^mP_{H^2_\alpha(\partial \A_r)}(S^{n+m})^{*}|_{H^2_\alpha(\partial \A_r)}=(-1)^m(A^{n+m})^*=M_1^nM_2^m.
		\]
		Thus, $(N_1, N_2)$ is a quantum annulus unitary dilation of $(M_1, M_2)$. We show that $(N_1, N_2)$ is not a regular quantum annulus unitary dilation of $(M_1, M_2)$. By Example \ref{eg1}, $A^*(w_{\alpha+n})=r(\alpha;n)w_{\alpha+n-1}$, where 
		\[
		r(\alpha;n)= \frac{r^{-2(\alpha+n)}+r^{2(\alpha+n)}}{r^{-2(\alpha+n-1)}+r^{2(\alpha+n-1)}}
		\]
		for every $n \in \Z$. Suppose $(N_1, N_2)$ is a regular quantum annulus unitary dilation of $(M_1, M_2)$. Then 
		\begin{equation}\label{eqn_2002}
			AA^*=-M_1^*M_2=-P_{H^2_\alpha(\partial \A_r)}N_1^*N_2|_{H^2_\alpha(\partial \A_r)}=P_{H^2_\alpha(\partial \A_r)}SS^*|_{H^2_\alpha(\partial \A_r)}=P_{H^2_\alpha(\partial \A_r)}S^*S|_{H^2_\alpha(\partial \A_r)},
		\end{equation}
		where the last equality holds since $S$ is a normal operator. The normality of $S$ follows from the fact that $S^*S=r^{-2}I_{L^2(r^{-1}\mathbb{T})}
		\oplus
		r^2I_{L^2(r\mathbb{T})}=SS^*$. Evidently, $AA^*w_{\alpha+n}=r(\alpha; n)w_{\alpha+n}$ for every $n \in \Z$ and   
		\[
		P_{H^2_\alpha(\partial \A_r)}S^*Sw_{\alpha+n}=\sum_{k \in \Z} \frac{\langle S^*Sw_{\alpha+n}, w_{\alpha+k}\rangle}{\|w_{\alpha+k}\|^2}w_{\alpha+k}=\sum_{k \in \Z} \frac{\langle w_{\alpha+n+1}, w_{\alpha+k+1}\rangle}{\|w_{\alpha+k}\|^2}w_{\alpha+k}=\frac{\|w_{\alpha+n+1}\|^2}{\|w_{\alpha+n}\|^2}w_{\alpha+n}.
		\]
		It follows from \eqref{eqn_2002} that $\displaystyle r(\alpha; 0)w_{\alpha}=AA^*w_{\alpha}=P_{H^2_\alpha(\partial \A_r)}S^*Sw_\alpha=\frac{\|w_{\alpha+1}\|^2}{\|w_{\alpha}\|^2}w_{\alpha}$ and we have
		\[
		\frac{r^{-2\alpha}+r^{2\alpha}}{r^{-2(\alpha-1)}+r^{2(\alpha-1)}}=\frac{r^{-2(\alpha+1)}+r^{2(\alpha+1)}}{r^{-2\alpha}+r^{2\alpha}}.
		\]
	A straightforward calculation shows that the above equality holds if and only if $r=1$, which is a contradiction. Thus, $(N_1, N_2)$ is not a regular quantum annulus unitary dilation of $(M_1, M_2)$. \qed 
	\end{eg}
	
	For clarity, we first prove the dilation theorem for doubly commuting pairs of operators in $Q\mathbb{A}_r$.

	\begin{thm}\label{thm_02}
		Let $\underline{T}=(T_1, T_2)$ be a doubly commuting pair of operators in $Q\A_r$ acting on a Hilbert space $\HS$. Consider the operators $J_1, J_2$ on $\HS \oplus \HS \oplus \HS \oplus \HS$ given by
		\[
		J_1= \begin{bmatrix}
			T_1 & 0 & D_1 & 0 \\
			0 & T_1 & 0 & D_1 \\
			0 & 0 & T_1^{-*} & 0 \\
			0 & 0 & 0 & T_1^{-*}
		\end{bmatrix} \qquad \text{and} \qquad J_2= \begin{bmatrix}
			T_2 & D_2 & 0 & 0 \\
			0 & T_2^{-*} & 0 & 0 \\
			0 & 0 & T_2 & D_2 \\
			0 & 0 & 0 & T_2^{-*}
		\end{bmatrix},
		\]
		where $D_i=T_i(T_i^*T_i)^{-1\slash 2}\beta(T_i^*, T_i)^{1\slash 2}$ for $i=1, 2$. Then $(J_1, J_2)$ is a doubly commuting pair of quantum annulus unitaries such that $(J_1, J_2)$ simultaneously extends $(T_1, T_2)$, that is, 
		\[
		T_1^{n_1}T_2^{n_2}=J_1^{n_1}J_2^{n_2}|_\HS \quad \text{for all $n_1, n_2 \in \Z$}.
		\] 
Moreover, $(J_1, J_2)$ is a regular quantum annulus unitary dilation of $(T_1, T_2)$.
	\end{thm}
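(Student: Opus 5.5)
The plan is to write both operators as tensor-like block matrices. Identify $\HS^{4}$ with $\mathbb{C}^2\otimes\mathbb{C}^2\otimes\HS$ (first tensor factor for the index of $J_1$, second for $J_2$). Let $V_i$ denote the one-variable dilation of Theorem \ref{thm_01}, i.e. the $2\times 2$ operator matrix with rows $(T_i, D_i)$ and $(0, T_i^{-*})$. Then $J_1$ is "$V_1$ acting in the first slot'' and $J_2$ is "$V_2$ acting in the second slot''. Concretely, $J_1=\begin{bmatrix} T_1\otimes I_2 & D_1\otimes I_2\\ 0 & T_1^{-*}\otimes I_2\end{bmatrix}$ after reordering, and $J_2=I_2\otimes V_2$ with entries $T_2, D_2, T_2^{-*}$ placed diagonally.

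\textbf{Step 1: each $J_i$ is a quantum annulus unitary.} Since $J_1$ is unitarily equivalent to $V_1\oplus V_1$ via a permutation of summands, and $J_2 = V_2\oplus V_2$, we get $\beta(J_i^*,J_i)=\beta(V_i^*,V_i)\oplus\beta(V_i^*,V_i)=0$ by Theorem \ref{thm_01}.

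\textbf{Step 2: double commutativity.} First, $T_2$ doubly commutes with $T_1$. Hence $T_2$ and $T_2^*$ commute with $T_1^*T_1$, with every continuous function of it, and therefore with $\beta(T_1^*,T_1)^{1/2}$, $(T_1^*T_1)^{-1/2}$, $D_1$, $D_1^*$, $T_1^{-*}$ and $T_1^{-1}$. By symmetry, every entry of $V_1$ and $V_1^*$ commutes with every entry of $V_2$ and $V_2^*$. Because $J_1$ acts blockwise on the first index and $J_2$ on the second, the products $J_1J_2$ and $J_2J_1$ have $(\,(a,b),(c,d)\,)$ entry $(V_1)_{ac}(V_2)_{bd}$ and $(V_2)_{bd}(V_1)_{ac}$, respectively. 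These agree by the entrywise commutation. The same argument with $J_1^*$, whose entries are $(V_1)_{ca}^*$, gives $J_1^*J_2=J_2J_1^*$.

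\textbf{Step 3: extension and regularity.} Both $J_i$ are upper triangular with respect to $\HS\oplus(\HS^{3})$, with $(1,1)$ entry $T_i$. Hence $J_i$ and $J_i^{-1}$ leave $\HS$ (the first summand) invariant, with $J_i^{\pm1}|_\HS=T_i^{\pm1}$. This gives $J_1^{n_1}J_2^{n_2}|_\HS=T_1^{n_1}T_2^{n_2}$ for all integers. For regularity with $S\neq\emptyset$, we need the compression of $J_1^{*m_1}J_2^{m_2}$ (and the analogous products). Write $J_i^{m}$ in the tensor picture as $V_i^m$ in slot $i$. Then $P_\HS\, J_1^{*m_1}J_2^{m_2}|_\HS$ is the $((0,0),(0,0))$ entry, which equals $\sum_{c,d}\big((V_1^{m_1})_{c0}\big)^*\,(V_2^{m_2})_{0d}\,\delta_{\ldots}$. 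More precisely, since $J_1^{*m_1}=(V_1^{m_1})^*$ in slot 1 and $J_2^{m_2}=V_2^{m_2}$ in slot 2, the $((0,0),(0,0))$ entry of their product is $((V_1^{m_1})_{00})^*(V_2^{m_2})_{00}=T_1^{*m_1}T_2^{m_2}$. The point is that the two slots are independent, so the compression factors as a product of the one-slot compressions. The one-slot compressions are $P_\HS V_i^{\pm m}|_\HS=T_i^{\pm m}$, since $V_i$ is upper triangular, and $P_\HS V_i^{*m}|_\HS=(T_i^{m})^*$. The remaining cases of $S$ follow the same way.

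\textbf{Main obstacle.} The step needing the most care is this factorization of the compression. I would verify it by writing $\KS=\mathbb{C}^2\otimes\mathbb{C}^2\otimes\HS$ and noting that $J_1^{*m_1}$ and $J_2^{m_2}$ have the forms $\sum_{a,c}E_{ac}\otimes I\otimes X_{ac}$ and $\sum_{b,d}I\otimes E_{bd}\otimes Y_{bd}$. Their product's $(00,00)$ entry is then $X_{00}Y_{00}$, with the entrywise commutation from Step 2 ensuring that order is irrelevant.
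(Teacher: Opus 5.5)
Your proof is correct and follows essentially the paper's argument. It uses the same commutation relations among $T_i, D_j, D_j^*$ (the paper's \eqref{eqn_2003}), the same one-variable dilation $V_i$, and the same upper-triangularity argument for the extension and regularity claims. The only difference is bookkeeping: you use the Kronecker/slot structure, as the paper itself does for Theorem \ref{thm_gen}, and cite Theorem \ref{thm_01} for $\beta(V_i^*,V_i)=0$, whereas the paper multiplies out the $4\times 4$ block matrices and recomputes $J_i^*J_i$ and $(J_i^*J_i)^{-1}$ explicitly.
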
	
	
	\begin{proof}
		Since $T_1, T_2$ are doubly commuting operators, it follows that $T_i(T_j^*T_j)=(T_j^*T_j)T_i$ for $1 \leq i, j \leq 2$ with $i \ne j$. An application of spectral theorem gives the commutativity of $\beta(T_j^*, T_j)^{1\slash 2}$ with $T_i$ and $(T_i^*T_i)^{1\slash 2}$ for $i \ne j$. Consequently, it follows that
		\begin{equation}\label{eqn_2003}
			T_iD_j=D_jT_i, \quad T_iD_j^*=D_j^*T_i, \quad D_iD_j=D_jD_i \quad \text{and} \quad D_iD_j^*=D_j^*D_i
		\end{equation} 
		for $1 \leq i, j \leq 2 $ with $i \ne j$. A routine computation shows that
		\[
		J_1J_2=\begin{bmatrix}
			T_1T_2 & T_1D_2 & D_1T_2 & D_1D_2\\
			0 & T_1T_2^{-*} & 0 & D_1T_2^{-*}\\
			0 & 0 & T_1^{-*}T_2 & T_1^{-*}D_2\\
			0 & 0 & 0 & T_1^{-*}T_2^{-*}
		\end{bmatrix} \quad \text{and} \quad J_2J_1=\begin{bmatrix}
			T_2T_1 & D_2T_1 & T_2D_1 & D_2D_1\\
			0 & T_2^{-*}T_1 & 0 & T_2^{-*}D_1\\
			0 & 0 & T_2T_1^{-*} & D_2T_1^{-*}\\
			0 & 0 & 0 & T_2^{-*}T_1^{-*}
		\end{bmatrix}.
		\]	
		Furthermore,
		\[
		J_1J_2^*=\begin{bmatrix}
			T_1T_2^* & 0 & D_1T_2^* & 0\\
			T_1D_2^* & T_1T_2^{-1} & D_1D_2^* & D_1T_2^{-1}\\
			0 & 0 & T_1^{-*}T_2^* & 0\\
			0 & 0 & T_1^{-*}D_2^* & T_1^{-*}T_2^{-1}
		\end{bmatrix} \quad \text{and} \quad J_2^*J_1=\begin{bmatrix}
			T_2^*T_1 & 0 & T_2^*D_1 & 0\\
			D_2^*T_1 & T_2^{-1}T_1 & D_2^*D_1 & T_2^{-1}D_1\\
			0 & 0 & T_2^*T_1^{-*} & 0\\
			0 & 0 & D_2^*T_1^{-*} & T_2^{-1}T_1^{-*}
		\end{bmatrix}.
		\]	
		Consequently, it follows from \eqref{eqn_2003} that $(J_1, J_2)$ is a doubly commuting pair. Moreover,
		\[
		J_1^{-1}=\begin{bmatrix}
			T_1^{-1} & 0 & -D_1^* & 0 \\
			0 & T_1^{-1} & 0 & -D_1^* \\
			0 & 0 & T_1^{*} & 0 \\
			0 & 0 & 0 & T_1^{*}
		\end{bmatrix} \quad \text{and} \quad J_2^{-1}=\begin{bmatrix}
			T_2^{-1} & -D_2^* & 0 & 0 \\
			0 & T_2^{*} & 0 & 0 \\
			0 & 0 & T_2^{-1} & -D_2^* \\
			0 & 0 & 0 & T_2^{*}
		\end{bmatrix}.
		\]
		A straightforward calculation shows that
		{\small 
			\begin{align*}
				J_1^*J_1
				&=\begin{bmatrix}
					T_1^*T_1 & 0 & T_1^*D_1 & 0\\
					0 & T_1^*T_1 & 0 & T_1^*D_1\\
					D_1^*T_1 & 0 & D_1^*D_1+(T_1^*T_1)^{-1} & 0\\
					0 & D_1^*T_1 & 0 & D_1^*D_1+(T_1^*T_1)^{-1}
				\end{bmatrix}\\
				&=\begin{bmatrix}
					T_1^*T_1 & 0 & (T_1^*T_1)^{1\slash 2}\beta(T_1^*, T_1)^{1\slash 2} & 0\\
					0 & T_1^*T_1 & 0 & (T_1^*T_1)^{1\slash 2}\beta(T_1^*, T_1)^{1\slash 2}\\
					\beta(T_1^*, T_1)^{1\slash 2}(T_1^*T_1)^{1\slash 2} & 0 & (r^2+r^{-2})-T_1^*T_1 & 0\\
					0 & \beta(T_1^*, T_1)^{1\slash 2}(T_1^*T_1)^{1\slash 2} & 0 & (r^2+r^{-2})-T_1^*T_1
				\end{bmatrix},
			\end{align*}
		}
		where the last equality follows from the facts that $T_1^*D_1=(T_1^*T_1)^{1\slash 2}\beta(T_1^*, T_1)^{1\slash 2}$ and $D_1^*D_1+(T_1^*T_1)^{-1}=(r^2+r^{-2})-T_1^*T_1$. Again, a simple calculation shows that $(J_1^*J_1)^{-1}$ equals
		{\small 
			\begin{align*}
				\begin{bmatrix}
					(r^2+r^{-2})-T_1^*T_1 & 0 & -\beta(T_1^*, T_1)^{1\slash 2}(T_1^*T_1)^{1\slash 2} & 0\\
					0 & (r^2+r^{-2})-T_1^*T_1 & 0 & -\beta(T_1^*, T_1)^{1\slash 2}(T_1^*T_1)^{1\slash 2}\\
					-(T_1^*T_1)^{1\slash 2}\beta(T_1^*, T_1)^{1\slash 2} & 0 & T_1^*T_1 & 0\\
					0 & -(T_1^*T_1)^{1\slash 2}\beta(T_1^*, T_1)^{1\slash 2} & 0 & T_1^*T_1
				\end{bmatrix}
			\end{align*}
		}
		and so, $J_1^*J_1+(J_1^*J_1)^{-1}=(r^2+r^{-2})I$. In a similar fashion, we have
		{\small 
			\begin{align*}
				J_2^*J_2&=\begin{bmatrix}
					T_2^*T_2 & T_2^*D_2 & 0 & 0 \\
					D_2^*T_2 & D_2^*D_2+(T_2^*T_2)^{-1} & 0 & 0 \\
					0 & 0 & T_2^*T_2 & T_2^*D_2 \\
					0 & 0 & D_2^*T_2 & D_2^*D_2+(T_2^*T_2)^{-1}
				\end{bmatrix}\\
				&=\begin{bmatrix}
					T_2^*T_2 & (T_2^*T_2)^{1\slash 2}\beta(T_2^*, T_2)^{1\slash 2} & 0 & 0 \\
					\beta(T_2^*, T_2)^{1\slash 2}(T_2^*T_2)^{1\slash 2} & (r^2+r^{-2})-T_2^*T_2 & 0 & 0 \\
					0 & 0 & T_2^*T_2 & \beta(T_2^*, T_2)^{1\slash 2}(T_2^*T_2)^{1\slash 2} \\
					0 & 0 & (T_2^*T_2)^{1\slash 2}\beta(T_2^*, T_2)^{1\slash 2} & (r^2+r^{-2})-T_1^*T_1
				\end{bmatrix}.
			\end{align*}
		}
		Also, $(J_2^*J_2)^{-1}$ equals
		{\small 
		\begin{align*}
			\begin{bmatrix}
				(r^2+r^{-2})-T_2^*T_2 &  -\beta(T_2^*, T_2)^{1\slash 2}(T_2^*T_2)^{1\slash 2} & 0 & 0\\
				-(T_2^*T_2)^{1\slash 2}\beta(T_2^*, T_2)^{1\slash 2} & T_2^*T_2 & 0 & 0\\
				0 & 0 & (r^2+r^{-2})-T_2^*T_2 &  -\beta(T_2^*, T_2)^{1\slash 2}(T_2^*T_2)^{1\slash 2}\\
				0 & 0 & -(T_2^*T_2)^{1\slash 2}\beta(T_2^*, T_2)^{1\slash 2} & T_2^*T_2
			\end{bmatrix}.
		\end{align*}}
		Thus, $J_2^*J_2+(J_2^*J_2)^{-1}=(r^2+r^{-2})I$, and $(J_1, J_2)$ is a doubly commuting pair of quantum annulus unitaries such that 
		$
		J_1^mJ_2^n=T_1^mT_2^n|_{\HS}
		$ 
		for every $m, n \in \Z$. Since the pair $(J_1, J_2)$ admits simultaneously an upper triangular form with respect to $\HS\oplus \HS \oplus \HS \oplus \HS$, we have that
		\[
		J_i^{*n}J_j^m=\begin{bmatrix}
			T_i^{*n} & 0 & 0 & 0 \\
			* & * & * & * \\
			* & * & * & * \\
			* & * & * & * 
		\end{bmatrix}\begin{bmatrix}
			T_j^m & * & * & * \\
			0 & * & * & * \\
			0 & * & * & * \\
			0 & * & * & *
		\end{bmatrix}=\begin{bmatrix}
			T_i^{*n}T_j^m & * & * & * \\
			* & * & * & * \\
			* & * & * & * \\
			* & * & * & *
		\end{bmatrix} \ \ \text{and so,}  \ \ P_{\HS}J_i^{*n}J_j^m|_{\HS}=T_i^{*n}T_j^m
		\]
		for every $m, n \in \Z$ and $1 \leq i, j \leq 2$ with $i \ne j$. The proof is now complete.
	\end{proof}	
	
	The above construction of doubly commuting quantum annulus unitaries for a doubly commuting pair of operators in $Q\A_r$ reveals a pattern that we intend to capitalize to find generalization from $d=2$ case to higher. We now illustrate the same procedure for doubly commuting triples before proceeding to the general $d$-tuple case. Let $\underline{T}=(T_1,T_2,T_3)$ be a doubly commuting triple of operators in $Q\A_r$ acting on a Hilbert space $\HS$. For $ 1\leq i \leq 3$, we denote by
	$
	D_i=T_i(T_i^*T_i)^{-1/2}\beta(T_i^*,T_i)^{1/2}.
	$ 
	Moreover, $J_1,J_2$ and $J_3$ with their inverses are given by
	{\small 
		\setlength{\arraycolsep}{3pt}
		\[
		J_1=
		\begin{bmatrix}
			T_1 & 0 & 0 & 0 & D_1 & 0 & 0 & 0\\
			0 & T_1 & 0 & 0 & 0 & D_1 & 0 & 0\\
			0 & 0 & T_1 & 0 & 0 & 0 & D_1 & 0\\
			0 & 0 & 0 & T_1 & 0 & 0 & 0 & D_1\\
			0 & 0 & 0 & 0 & T_1^{-*} & 0 & 0 & 0\\
			0 & 0 & 0 & 0 & 0 & T_1^{-*} & 0 & 0\\
			0 & 0 & 0 & 0 & 0 & 0 & T_1^{-*} & 0\\
			0 & 0 & 0 & 0 & 0 & 0 & 0 & T_1^{-*}
		\end{bmatrix}, \ \ J_1^{-1}=
		\begin{bmatrix}
			T_1^{-1} & 0 & 0 & 0 & -D_1^* & 0 & 0 & 0\\
			0 & T_1^{-1} & 0 & 0 & 0 & -D_1^* & 0 & 0\\
			0 & 0 & T_1^{-1} & 0 & 0 & 0 & -D_1^* & 0\\
			0 & 0 & 0 & T_1^{-1} & 0 & 0 & 0 & -D_1^*\\
			0 & 0 & 0 & 0 & T_1^{*} & 0 & 0 & 0\\
			0 & 0 & 0 & 0 & 0 & T_1^{*} & 0 & 0\\
			0 & 0 & 0 & 0 & 0 & 0 & T_1^{*} & 0\\
			0 & 0 & 0 & 0 & 0 & 0 & 0 & T_1^{*}
		\end{bmatrix},
		\] 
		\[ 
		J_2=
		\begin{bmatrix}
			T_2 & 0 & D_2 & 0 & 0 & 0 & 0 & 0\\
			0 & T_2 & 0 & D_2 & 0 & 0 & 0 & 0\\
			0 & 0 & T_2^{-*} & 0 & 0 & 0 & 0 & 0\\
			0 & 0 & 0 & T_2^{-*} & 0 & 0 & 0 & 0\\
			0 & 0 & 0 & 0 & T_2 & 0 & D_2 & 0\\
			0 & 0 & 0 & 0 & 0 & T_2 & 0 & D_2\\
			0 & 0 & 0 & 0 & 0 & 0 & T_2^{-*} & 0\\
			0 & 0 & 0 & 0 & 0 & 0 & 0 & T_2^{-*}
		\end{bmatrix}, \ \ J_2^{-1}=
		\begin{bmatrix}
			T_2^{-1} & 0 & -D_2^* & 0 & 0 & 0 & 0 & 0\\
			0 & T_2^{-1} & 0 & -D_2^* & 0 & 0 & 0 & 0\\
			0 & 0 & T_2^{*} & 0 & 0 & 0 & 0 & 0\\
			0 & 0 & 0 & T_2^{*} & 0 & 0 & 0 & 0\\
			0 & 0 & 0 & 0 & T_2^{-1} & 0 & -D_2^* & 0\\
			0 & 0 & 0 & 0 & 0 & T_2^{-1} & 0 & -D_2^*\\
			0 & 0 & 0 & 0 & 0 & 0 & T_2^{*} & 0\\
			0 & 0 & 0 & 0 & 0 & 0 & 0 & T_2^{*}
		\end{bmatrix},
		\]
		and
		\[
		J_3=
		\begin{bmatrix}
			T_3 & D_3 & 0 & 0 & 0 & 0 & 0 & 0\\
			0 & T_3^{-*} & 0 & 0 & 0 & 0 & 0 & 0\\
			0 & 0 & T_3 & D_3 & 0 & 0 & 0 & 0\\
			0 & 0 & 0 & T_3^{-*} & 0 & 0 & 0 & 0\\
			0 & 0 & 0 & 0 & T_3 & D_3 & 0 & 0\\
			0 & 0 & 0 & 0 & 0 & T_3^{-*} & 0 & 0\\
			0 & 0 & 0 & 0 & 0 & 0 & T_3 & D_3\\
			0 & 0 & 0 & 0 & 0 & 0 & 0 & T_3^{-*}
		\end{bmatrix},\ \ J_3^{-1}=
		\begin{bmatrix}
			T_3^{-1} & -D_3^* & 0 & 0 & 0 & 0 & 0 & 0\\
			0 & T_3^{*} & 0 & 0 & 0 & 0 & 0 & 0\\
			0 & 0 & T_3^{-1} & -D_3^* & 0 & 0 & 0 & 0\\
			0 & 0 & 0 & T_3^{*} & 0 & 0 & 0 & 0\\
			0 & 0 & 0 & 0 & T_3^{-1} & -D_3^* & 0 & 0\\
			0 & 0 & 0 & 0 & 0 & T_3^{*} & 0 & 0\\
			0 & 0 & 0 & 0 & 0 & 0 & T_3^{-1} & -D_3^*\\
			0 & 0 & 0 & 0 & 0 & 0 & 0 & T_3^{*}
		\end{bmatrix}.
		\]
	}
	Clearly, $(T_1,T_2,T_3)$ simultaneously extends to $(J_1,J_2,J_3)$. Moreover, $(J_1,J_2,J_3)$ is a doubly commuting triple of quantum annulus unitaries and forms a regular quantum annulus unitary dilation of $(T_1,T_2,T_3)$. A proof of these facts follows from the general $d$-tuple case presented below, which is one of the main results of this article.
	
	\begin{thm}\label{thm_gen}
		Let $\underline{T}=(T_1, \dotsc, T_d)$ be a doubly commuting tuple of operators in $Q\A_r$ acting on a Hilbert space $\HS$. Consider the operators $J_1, \dotsc, J_d$ defined on $\underbrace{\HS \oplus \dotsb \oplus \HS}_{2^d \text{-times}}$ by
		\[
		J_i=I^{\otimes (i-1)} \otimes \begin{bmatrix}
			T_i & T_i(T_i^*T_i)^{-1\slash 2}\beta(T_i^*, T_i)^{1\slash 2}\\
			0 & T_i^{-*}
		\end{bmatrix} \otimes I^{\otimes (d-i)},
		\]
		where $I^{\otimes k}$ denotes the $k$-fold tensor product of the identity operator on $\HS \otimes \C^2 \equiv \HS \oplus \HS$ with itself. Then $\underline{J}=(J_1, \dotsc, J_d)$ is a  doubly commuting tuple of quantum annulus unitaries that simultaneously extends $\underline{T}$, that is, 
		\[
		J_1^{m_1}\dotsc J_d^{m_d}=T_1^{m_1}\dotsc T_d^{m_d}|_\HS 
		\]
		for every $m_1, \dotsc, m_d \in \Z$. Furthermore, $\underline{J}$ is a regular quantum annulus unitary dilation of $\underline{T}$. 
	\end{thm}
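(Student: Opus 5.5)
We identify $\HS^{\oplus 2^d}$ with $\HS\otimes(\C^2)^{\otimes d}$ and index the $2^d$ summands by $\varepsilon=(\varepsilon_1,\dotsc,\varepsilon_d)\in\{0,1\}^d$. The copy of $\HS$ with $\varepsilon=(0,\dotsc,0)$ is the embedded $\HS$. In this picture $J_i$ acts as the $2\times 2$ block matrix $V_i=\begin{bmatrix} T_i & D_i\\ 0 & T_i^{-*}\end{bmatrix}$, with $D_i=T_i(T_i^*T_i)^{-1/2}\beta(T_i^*,T_i)^{1/2}$, in the $i$-th $\C^2$ slot, and as the identity in the other slots. Its operator entries act on the common factor $\HS$; this is not a tensor product in the strict sense, since all entries act on the same $\HS$. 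Explicitly, the $(\varepsilon,\varepsilon')$ entry of $J_i$ is $(V_i)_{\varepsilon_i\varepsilon_i'}\prod_{k\ne i}\delta_{\varepsilon_k\varepsilon_k'}$. The case $d=2$ is Theorem~\ref{thm_02}, and the general proof follows the same four steps.

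\emph{Step 1 (commutation relations).} As in the proof of Theorem~\ref{thm_02}, double commutativity gives that $T_i$ commutes with $T_j^*T_j$ for $i\ne j$. The spectral theorem and Fuglede--Putnam type arguments then show that every entry in $\{T_i,T_i^*,T_i^{-1},T_i^{-*},D_i,D_i^*\}$ commutes with every entry in the corresponding set for $j$. This is exactly \eqref{eqn_2003} together with the inverse and adjoint versions.

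\emph{Step 2 (double commutativity of $\underline J$).} For $i\ne j$, the $(\varepsilon,\varepsilon')$ entry of $J_iJ_j$ is $(V_i)_{\varepsilon_i\varepsilon_i'}(V_j)_{\varepsilon_j\varepsilon_j'}\prod_{k\ne i,j}\delta_{\varepsilon_k\varepsilon'_k}$. For $J_jJ_i$ the same product appears with the factors in the opposite order, so Step~1 gives $J_iJ_j=J_jJ_i$. The same computation, with $V_j^*$ in place of $V_j$, gives $J_iJ_j^*=J_j^*J_i$.

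\emph{Step 3 (each $J_i$ is a quantum annulus unitary).} We have $J_i^*J_i=I\otimes\cdots\otimes V_i^*V_i\otimes\cdots\otimes I$, with the same slot structure. Theorem~\ref{thm_01} says $V_i$ is a quantum annulus unitary, so $V_i^*V_i+(V_i^*V_i)^{-1}=(r^2+r^{-2})I$. Inverse and sum both respect the slot structure, because the construction is a $*$-homomorphism $M_2(B(\HS))\to M_{2^d}(B(\HS))$ sending $X$ to its block-diagonal amplification. Hence $\beta(J_i^*,J_i)=0$.

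\emph{Step 4 (extension and regularity).} Each $J_i$ and $J_i^{-1}$ is block upper triangular with respect to the ordering of $\{0,1\}^d$ by the product order. Moreover, the column of $J_i^{\pm1}$ indexed by $\varepsilon=0$ has only the entry $T_i^{\pm1}$ at position $0$. So $\HS=\HS_0$ is invariant under each $J_i^{\pm1}$, with $J_i^{\pm1}|_\HS=T_i^{\pm1}$. It follows that $J_1^{m_1}\cdots J_d^{m_d}|_\HS=T_1^{m_1}\cdots T_d^{m_d}$ for all integers $m_i$.

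For regularity, take $S\subseteq\{1,\dotsc,d\}$. Write the product $\prod_{i\in S}J_i^{*m_i}\prod_{i\notin S}J_i^{m_i}$ and compress it to $\HS$. The factor $\prod_{i\notin S}J_i^{m_i}$ maps $\HS$ into $\HS$ and equals $\prod_{i\notin S} T_i^{m_i}$ there. Since $\HS$ is invariant under each $J_i^{m_i}$, we have $P_\HS J_i^{*m_i}=(J_i^{m_i}P_\HS)^*$, whose compression to $\HS$ is $T_i^{*m_i}$. Composing, and using commutativity of the $J_i$, gives $P_\HS\prod_{i\in S}J_i^{*m_i}\prod_{i\notin S}J_i^{m_i}|_\HS=\prod_{i\in S}T_i^{*m_i}\prod_{i\notin S}T_i^{m_i}$.

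The main obstacle is the bookkeeping in Steps~2 and~3, because the "tensor" notation is only formal. I would handle it by writing the entries explicitly in the index form $(V_i)_{\varepsilon_i\varepsilon_i'}\prod_{k\ne i}\delta_{\varepsilon_k\varepsilon_k'}$. With that, the products $J_iJ_j$ and $J_iJ_j^*$, as well as the functional calculus in Step~3, reduce to the corresponding $2\times2$ and $4\times4$ identities already verified in Theorem~\ref{thm_01} and Theorem~\ref{thm_02}.
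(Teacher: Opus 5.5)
Your proposal is correct and follows essentially the same route as the paper. Both arguments use the same three ingredients:
\begin{itemize}
\item the commutation relations among $T_i, T_i^{*}, D_i, D_i^{*}$ and $T_j, T_j^{*}, D_j, D_j^{*}$ for $i\neq j$, which come from double commutativity and the spectral theorem;
\item a slotwise computation giving $\beta(J_i^*,J_i)=0$ from $\beta(V_i^*,V_i)=0$;
\item block upper triangularity of $J_i^{\pm1}$ with $(1,1)$-entry $T_i^{\pm1}$, which gives both the simultaneous extension and the regularity.
\end{itemize}
Your $\{0,1\}^d$-indexing, the slot-$i$ $*$-homomorphism $X\mapsto\Psi_i(X)$, and the co-invariance argument for compressing $\prod_{i\in S}J_i^{*m_i}$ are a more explicit write-up of the paper's formal tensor notation and its $(1,1)$-entry computation.
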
	
	
	\begin{proof} 	
		Suppose $(T_1, \dotsc, T_d)$ is a doubly commuting tuple of operators in $Q\A_r$ acting on a Hilbert space $\HS$. For $1 \leq i \leq d$, define $D_i=T_i(T_i^*T_i)^{-1\slash 2}\beta(T_i^*, T_i)^{1\slash 2}$. It follows from Theorem \ref{thm_01} that a quantum annulus unitary extension of each $T_i$ is given by
		$
		\Lambda_i=\begin{bmatrix}
			T_i & D_i\\
			0 & T_i^{-*}
		\end{bmatrix}.
		$
		Consider the Hilbert space $\displaystyle \KS=\HS \otimes (\C^2)^{\otimes d}\equiv \HS \oplus \dotsc \oplus \HS$ ($2^d$-times) and define
		\[
		J_i=I^{\otimes (i-1)} \otimes \Lambda_i \otimes I^{\otimes (d-i)} \quad (1 \leq i \leq d).
		\]
		Identify $\HS$ with the subspace $\HS \oplus \{0\} \oplus \dotsc \oplus \{0\} \subseteq \KS$. Since $\Lambda_i$ is a quantum annulus unitary, we have that $\beta(\Lambda_i^*,\Lambda_i)=0$ for $1 \leq i \leq d$. A routine calculation shows that $\beta(J_i^*,J_i)=
		(r^2+r^{-2})-J_i^*J_i-(J_i^*J_i)^{-1}=
		I^{\otimes (i-1)}
		\otimes
		\beta(\Lambda_i^*,\Lambda_i)
		\otimes
		I^{\otimes (d-i)}
		=0$.
		Thus, $(J_1, \dotsc, J_d)$ is a tuple of quantum annulus unitaries. Since $T_1, \dotsc, T_d$ are doubly commuting operators, it follows that $(T_i, T_j^*T_j)$ is a doubly commuting pair for $1 \leq i, j \leq d$ with $i \ne j$. Now, one can apply spectral theorem to show that $T_i$ doubly commute with $(T_j^*T_j)^{1\slash 2}$ and $\beta(T_j^*, T_j)^{1\slash 2}$ with $i \ne j$. Consequently, we have that
		\begin{align}\label{eqn_03}
			T_iD_j=D_jT_i, \quad T_iD_j^*=D_j^*T_i, \quad D_iD_j=D_jD_i \quad \text{and} \quad D_iD_j^*=D_j^*D_i
		\end{align}
		for $1 \leq i, j \leq d$ with $i \ne j$. The operators $J_i$ and $J_j$ act non-trivially only in the $i$-th and $j$-th tensor coordinates, respectively. Hence, to prove that $J_i$ and $J_j$ doubly commute, it suffices to verify that the operator entries of $\Lambda_i$ commute with those of $\Lambda_j$, which directly follows from \eqref{eqn_03} together with the doubly commutativity of $(T_1, \dots, T_d)$. In other words, using \eqref{eqn_03} and arguing as in the proof of Theorem \ref{thm_02}, it follows that $\underline{J}=(J_1,\dotsc,J_d)$ is a doubly commuting tuple. It is easy to see that each $J_i$ admits a block upper triangular matrix representation of size $2^d\times 2^d$ with respect to the decomposition $\underbrace{\HS \oplus \dotsb \oplus \HS}_{2^d \text{-times}}$. In particular, each $J_i$ can be written as
		\[
		J_i=\begin{bmatrix}
			T_i& * & * & \dotsc & * \\
			0 & * & * & \dotsc & *\\
			\vdots & \vdots & \vdots & \dotsc & \vdots\\
			0 & * & * & \dotsc & * 
		\end{bmatrix}.
		\]
		Since the inverse of an invertible block upper triangular operator is again block upper triangular, it follows that $J_i^{m_i}$ is a block upper triangular with $(1,1)$-entry $T_i^{m_i}$ for every $m_i \in \Z$ and $1 \leq i \leq d$. Consequently, we have for every subset $S\subseteq \{1,\dotsc,d\}$ and 
		$m_1,\dotsc,m_d\in\Z$ that
			\begin{align*}
				\left(\prod_{i\in S}J_i^{*m_i}\right)
				\left(\prod_{i\notin S}J_i^{m_i}\right)
				&=
				\begin{bmatrix}
					\left(\underset{i\in S}{\prod}T_i^{*m_i}\right)
					\left(\underset{i\in S}{\prod}T_i^{m_i}\right)
					& * & * & \dotsc & * \\
					* & * & * & \dotsc & *\\
					\vdots & \vdots & \vdots & \dotsc & \vdots\\
					* & * & * & \dotsc & * 
				\end{bmatrix}
			\end{align*}
		and so, 
		\[
		\displaystyle P_{\HS}
		\left(
		\prod_{i\in S}J_i^{*m_i}
		\prod_{i\notin S}J_i^{m_i}
		\right)\Big|_{\HS}
		=
		\left(
		\prod_{i\in S}T_i^{*m_i}
		\prod_{i\notin S}T_i^{m_i}
		\right).
		\]
		Therefore, $\underline{J}$ is a regular quantum annulus unitary dilation of $\underline{T}$.  The proof is now complete.
	\end{proof}

	\section{Annulus as a complete $K$-spectral set for quantum annulus}\label{sec_02}

\noindent Pascoe \cite{Pascoe} proved that $\CA_r$ is a $K_p$-spectral set for operators in $Q\A_r$ with $K_p = 2\left(1+\frac{2r^2}{r^4-1}\right)$, i.e., 
\begin{align}\label{eqn_PB}
	\|g(T)\| \leq 2\left(1+\frac{2r^2}{r^4-1}\right) \|g\|_{\infty, \CA_r}
\end{align}
for every $g \in \text{Rat}(\CA_r)$. An alternative proof of this fact was recently provided in \cite{Pal_Pascoe}. As mentioned by Pascoe in \cite{Pascoe}, Hartz pointed out to him that Shields \cite{Shields} obtained slightly sharper bounds for the norms in the decomposition into two functions on $\D$. This suggests a possible improvement in the spectral constant $K_p$ appearing in \eqref{eqn_PB}. In this section, we present a refinement of the inequality \eqref{eqn_PB} by showing that $\CA_r$ is a complete $K_t$-spectral set for operators in $Q\A_r$, where  
\[
K_t=2\left(1+\frac{2r^2}{(r^2+1)\sqrt{r^4-1}}\right).
\]
Also, we present alternative characterizations for operators in $Q\A_r$ and quantum annulus unitaries. To begin with, we present the following result.

\begin{thm}\label{thm_206}
	$\overline{\mathbb{A}}_r$ is a complete $K_t$-spectral set for every operator in $Q\A_r$, where 
	\[
	K_t=
	2\left(1+\frac{2r^2}{(r^2+1)\sqrt{r^4-1}}\right).
	\]
\end{thm}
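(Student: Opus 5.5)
Since each $T\in Q\A_r$ extends to the quantum annulus unitary $J$ of Theorem \ref{thm_01}, and extension is compatible with the rational functional calculus ($f(T)=f(J)|_\HS$ for $f\in\text{Rat}(\CA_r)$, and the same for matrices $[f_{ij}]$, because $J$ is upper triangular with $(1,1)$-entry $T$ and $\sigma(J)\subseteq\sigma(T)\cup\sigma(T^{-*})\subseteq\CA_r$), I reduce to proving the complete $K_t$ bound for a quantum annulus unitary $J$. For such a $J$, write $J^*J=P$; the relation $P+P^{-1}=(r^2+r^{-2})$ forces $\sigma(P)\subseteq\{r^{-2},r^2\}$. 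Hence $\KS=\KS_-\oplus\KS_+$ with $|J|=r^{-1}$ on $\KS_-$ and $|J|=r$ on $\KS_+$. Writing the polar decomposition $J=U|J|$ with $U$ unitary, the set $\{J: \beta(J^*,J)=0\}$ is the orbit of block-structured operators. The key observation is that $\Phi_r(J)=r(1+r^2)^{-1}(J+J^{-*})=U\cdot r(1+r^2)^{-1}(|J|+|J|^{-1})=U$ is unitary.

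Next I use the Laurent/Shields decomposition. Given $[f_{ij}]\in M_n(\text{Rat}(\CA_r))$ with sup norm $\le 1$, write $f=f_+ + f_-$, where $f_+$ is holomorphic on $r\D$ and $f_-$ is holomorphic on $\{|z|>r^{-1}\}\cup\{\infty\}$, vanishing at $\infty$. Shields' sharper estimate (the refinement mentioned before the theorem) should give $\|f_+\|_{\infty,r\overline{\D}}+\|f_-\|_{\infty,\{|z|\ge r^{-1}\}}\le K_t$, or at least a bound of the form $\|f_\pm\|\le 1+\frac{2r^2}{(r^2+1)\sqrt{r^4-1}}$. I will verify that this holds matricially: the decomposition is given by Cauchy integrals over the two boundary circles, so it acts entrywise. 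I would rederive the norm estimate from a Fourier/Poisson-kernel argument in which the kernel's $L^1$ norm is computed, rather than from a scalar maximum-modulus argument, so that it passes to $M_n$.

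Then $\|f(J)\|\le\|f_+(J)\|+\|f_-(J)\|$, with $f_-(J)=g(J^{-1})$ for $g(w)=f_-(1/w)$ holomorphic on $r\D$. Since $\|J\|,\|J^{-1}\|\le r$, von Neumann's inequality applied to the contractions $J/r$ and $J^{-1}/r$ gives complete contractivity of $h\mapsto h(J)$ on $A(r\overline{\D})$, and likewise for $J^{-1}$. Therefore $\|[f_{ij}(J)]\|\le\|[f_{+,ij}]\|_{\infty,r\overline\D}+\|[g_{ij}]\|_{\infty,r\overline\D}$. This already works for $T$ itself, with no dilation needed. The whole burden thus rests on the decomposition constants, with each piece bounded by $1+\frac{2r^2}{(r^2+1)\sqrt{r^4-1}}$ and the sum by $K_t$.

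The main obstacle is obtaining the sharp constant $\frac{2r^2}{(r^2+1)\sqrt{r^4-1}}$ rather than Pascoe's $\frac{2r^2}{r^4-1}$. I would first try to choose the splitting constant optimally: add and subtract a suitable multiple of the mean $\hat f_0$, or more generally shift between $f_+$ and $f_-$ a function $c$ built from the quantum annulus unitary structure. If the direct estimate is insufficient, I would exploit the dilation $J$ instead. The operator $J$ is unitarily equivalent to an operator built from $U$ together with the projections onto $\KS_\pm$, and compressions to the two circles $r^{\pm1}\T$ suggest bounding $f(J)$ through the Poisson-type kernel of the annulus at the two radii. Optimizing over a one-parameter family of splittings should produce the factor $(r^2+1)^{-1}\sqrt{r^4-1}^{-1}\cdot 2r^2$, reflecting $\|\Phi_r\|$-type normalizations $r/(1+r^2)$.
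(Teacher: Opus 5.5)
\textbf{There is a genuine gap: the route you actually pursue cannot reach $K_t$.} Your reduction to a quantum annulus unitary $J$ is correct, and so is your observation that $\Phi_r(J)$ is unitary. The trouble is the next step. You bound $\|f(T)\|$ (or $\|f(J)\|$) by $\|f_+(T)\|+\|f_-(T)\|$ and apply von Neumann's inequality to $T/r$ and $T^{-1}/r$. By Liouville, the splitting $f=f_++f_-$ is unique up to an additive constant. So the best constant this method can give is
\[
\sup_f \inf_c \frac{\|f_+ + c\|_{\infty,r\overline{\D}}+\|f_- - c\|_{\infty,\{|z|\ge r^{-1}\}}}{\|f\|_{\infty,\CA_r}}.
\]
This quantity does not tend to $2$. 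Take $k(w)=(w+s)/(1+sw)$ with $0<s<1$, and $f(z)=k(z/r)+k(1/(rz))-s$. Then $\|f\|_{\infty,\CA_r}\le 1+O(r^{-2})$. However, since $k$ maps $\T$ onto $\T$, for every $c$ the two sup norms equal exactly $1+|c|$ and $1+|s+c|$, so their sum is at least $2+s$.

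Because $K_t\to 2$, for large $r$ no choice of splitting constant, no Shields-type refinement and no kernel computation can produce $K_t$ along this route. Consistently with this, the Parseval--Cauchy--Schwarz bounds for the natural split give only $P_r+Q_r=2+\frac{1+r^2}{\sqrt{r^4-1}}\to 3$, where $P_r=1+\frac{1}{\sqrt{r^4-1}}$ and $Q_r=1+\frac{r^2}{\sqrt{r^4-1}}$. Your remark that the argument ``already works for $T$ itself, with no dilation needed'' is the symptom: the sharp constant genuinely needs the dilation. Your fallback ideas (Poisson-type kernels, a one-parameter family of splittings) do not supply the missing step.

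\textbf{The missing idea is to conjugate by the unitary $U=\Phi_r(J)$, not merely to note that it is unitary.} Set $a_r=r/(1+r^2)$ and $\widetilde U=I_n\otimes U$, and let $J$ stand for $I_n\otimes J$. Then $U^*=a_r(J^*+J^{-1})$ and $UU^*=a_r(UJ^*+J^{-*}U^*)$. Writing $G(z)=A_0+zG^+(z)+z^{-1}G^-(z^{-1})$, one gets $\|G(J)\|=\|\widetilde UG(J)\widetilde U^*\|$ and
\[
\widetilde U G(J)\widetilde U^*=a_r\big[\widetilde U(A_0+JG^+(J))J^*+J^{-*}(A_0+J^{-1}G^-(J^{-1}))\widetilde U^*+\widetilde UG^+(J)+G^-(J^{-1})\widetilde U^*\big].
\]
Now estimate each term:
\begin{itemize}
\item The two terms carrying $A_0$ are each bounded by $rP_r\|G\|_{\infty,\CA_r}$. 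This uses von Neumann's inequality, $\|J\|,\|J^{-1}\|\le r$, and Parseval plus Cauchy--Schwarz applied to $\langle G(\cdot)x,y\rangle$; it passes to matrices directly.
\item For the other two terms, $\|G^{\pm}\|_{\infty,r\T}\le Q_r\|G\|_{\infty,\CA_r}/r$.
\end{itemize}
The total is therefore $\frac{2r^2P_r+2Q_r}{1+r^2}\|G\|_{\infty,\CA_r}=K_t\|G\|_{\infty,\CA_r}$. In effect, the conjugation gives the $P_r$-terms weight $r^2/(1+r^2)$ and demotes the bad $Q_r$-terms to weight $1/(1+r^2)$, instead of weight $1$ each. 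This redistribution is unavailable for $T$ itself, since $\Phi_r(T)$ is only a contraction.
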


\begin{proof}
	Let $T\in Q\mathbb{A}_r$ be acting on a Hilbert space $\HS$. By Theorem \ref{thm_01}, there exists a Hilbert space $\KS\supseteq \HS$ and an operator $J\in Q\mathbb{A}_r$ acting on $\KS$ such that $\beta(J^*,J)=0$	and $g(T)=g(J)|_{\HS}$ for all $g\in \text{Rat}(\overline{\mathbb{A}}_r)$. Fix $n\geq 1$ and let $G=[g_{ij}]_{i, j=1}^n \in M_n(\text{Rat}(\CA_r))$. Then $G(T)=[g_{ij}(T)]_{i, j=1}^n$ acts on $\mathbb{C}^n\otimes \HS$ and
	$G(J)=[g_{ij}(J)]_{i, j=1}^n$ acts on $\mathbb{C}^n\otimes \KS$. For every $1\leq i,j\leq n$, it follows that $G(T)=G(J)\,
	\big|_{\mathbb{C}^n\otimes \HS}$. Consequently, $\|G(T)\| \leq \|G(J)\|$. Now define
	\begin{equation*}
		U=\frac{r}{1+r^2}(J+J^{-*}).
	\end{equation*}
	Since $\beta(J^*,J)=0$, it follows that $U$ is a unitary. Set $\widetilde U=I_n\otimes U$, which is a unitary on $\mathbb{C}^n\otimes \KS$. Therefore,
	$\|G(J)\|=
	\|\widetilde U G(J)\widetilde U^*\|$. Since each $g_{ij}$ is a rational function on $\CA_r$, we can write
	\[
	g_{ij}(z)
	=
	a_{ij}+zg_{ij}^+(z)+z^{-1}g_{ij}^-(z^{-1}),
	\]
	where $a_{ij}+zg_{ij}^+(z)$ is analytic in $|z|<r$ and $
	z^{-1}g_{ij}^-(z^{-1})$ is the principal part. Define
	\[
	A_0=[a_{ij}]_{i, j=1}^n,
	\qquad
	G^+(z)=[g_{ij}^+(z)]_{i, j=1}^n,
	\qquad
	G^-(z)=[g_{ij}^-(z)]_{i, j=1}^n.
	\]
	Then $G(z)=A_0+zG^+(z)+z^{-1}G^-(z^{-1})$. Clearly, $z \mapsto G^+(z)$ and $z \mapsto G^-(z)$ are matrix-valued holomorphic functions on $r\overline{\D}$. Then $G^+(J), G^-(J^{-1}) \in \mathcal{B}(\C^n \otimes \KS)$. For the sake of brevity, we write $(I_n \otimes J)G^+(J)=JG^+(J)$ and $(I_n \otimes J^{-1})G^-(J)=J^{-1}G^-(J^{-1})$, where $I_n$ is the identity matrix in $M_n(\C)$. Thus, $G(J)= A_0+JG^+(J)+J^{-1}G^-(J^{-1})$ and
	\[
		\widetilde U G(J)\widetilde U^*
		=
		\widetilde U
		\left(
		A_0+JG^+(J)+J^{-1}G^-(J^{-1})
		\right)
		\widetilde U^*.
		\]
	Evidently, $
	\widetilde U
	=
	r(1+r^2)^{-1}
	(I_n\otimes J+I_n\otimes J^{-*})$. Using the fact that $G^+(J)$ and $G^-(J^{-1})$ commute with $I_n\otimes J$ and $I_n\otimes J^{-1}$, we have that
	\begin{align*}
		\widetilde U G(J)\widetilde U^*
		=
		\frac{r}{1+r^2}
		\Big[
		\widetilde U(JG^+(J)+A_0)J^*
		+
		J^{-*}(J^{-1}G^-(J^{-1})+A_0)\widetilde U^*
		+
		\widetilde U G^+(J)
		+
		G^-(J^{-1})\widetilde U^*
		\Big].
	\end{align*}
	Since $\|J\|, \|J^{-1}\| \leq r$, it follows that $r\overline{\D}$ is a complete spectral set for $J$ and $J^{-1}$.	Consequently, 
	\begin{align}\label{eqn_2005}
		& \quad \|G(T)\| \notag \\
		& \leq \frac{r}{1+r^2}
		\left[
		\|\widetilde U(JG^+(J)+A_0)J^*\|
		+
		\|J^{-*}(J^{-1}G^-(J^{-1})+A_0)\widetilde U^*\|
		+
		\|\widetilde U G^+(J)\|
		+
		\|G^-(J^{-1})\widetilde U^*\|
		\right] \notag \\
		&	 \leq 
		\frac{r}{1+r^2}
		\left[
		r\|JG^+(J)+A_0\|
		+
		r\|J^{-1}G^-(J^{-1})+A_0\|
		+
		\|G^+(J)\|
		+
		\|G^-(J^{-1})\|
		\right] \notag \\
	& \leq  
\frac{r}{1+r^2}
\left[
r\|zG^+(z)+A_0\|_{\infty, r\overline{\D}}
+
r\|wG^-(w)+A_0\|_{\infty, r\overline{\D}}
+
\|G^+(z)\|_{\infty, r\overline{\D}}
+
\|G^-(w)\|_{\infty, r\overline{\D}}
\right] \notag \\
		& = 
		\frac{r}{1+r^2}
		\left[
		r\|zG^+(z)+A_0\|_{\infty, r\T}
		+
		r\|wG^-(w)+A_0\|_{\infty, r\T}
		+
		\|G^+(z)\|_{\infty, r\T}
		+
		\|G^-(w)\|_{\infty, r\T}
		\right],
	\end{align}	
where the last equality follows from maximum modulus theorem for Banach space valued holomorphic functions on domains in $\C$. We now provide estimates for each norm quantity appearing in \eqref{eqn_2005}. To do so, we make use of the Laurent series representation of $G$. Since $G=[g_{ij}]_{i, j=1}^n$ is a matrix-valued function and each $g_{ij}$ is holomorphic on a neighborhood of $\CA_r$, there exist scalars $s_1, s_2$ satisfying $0<s_1<r^{-1}<r<s_2$ such that $G$ is holomorphic on the annulus $A_{s_1,s_2}=\{z\in\mathbb C:s_1<|z|<s_2\}$. It follows from Theorem 1.9.1 in \cite{Gohberg} that $G$ admits a Laurent expansion
	$\displaystyle 
	G(z)
	=
	\sum_{k=0}^{\infty} B_k z^k
	+
	\sum_{k=1}^{\infty} B_{-k} z^{-k},
	$
	where $B_k \in M_n(\C)$ such that 
	$ \displaystyle \sum_{k=-\infty}^{\infty}\|B_k z^k\|<\infty
	$ for every $z \in \CA_r$. Evidently, $B_0=A_0, G^+(z)=\sum_{k=1}^{\infty} B_k z^{k-1}$ and $G^{-}(z)=\sum_{k=1}^{\infty} B_{-k} z^{k-1}$. Fix $z\in r\T$.  Let $x,y\in \mathbb C^n$ be unit vectors. Define
\[
\varphi_{x,y}: \CA_r \to \C \quad \text{as} \quad \varphi_{x, y}(\zeta)
	=	\langle G(\zeta)x,y\rangle
	=
	\sum_{k=-\infty}^{\infty}
	\langle B_kx,y\rangle \zeta^k.
	\] 
An application of Parseval's identity gives that
	\begin{align}\label{eqn_Parseval}
		\sum_{k=-\infty}^{\infty}
		|\langle B_{k}x,y\rangle|^2 r^{-2k}
		=
		\frac1{2\pi}
		\int_0^{2\pi}
		\left|
		\varphi_{x,y}\left(\frac{e^{i\theta}}{r}\right)
		\right|^2\,d\theta 
		=
		\frac1{2\pi}
		\int_0^{2\pi}
		\left|
		\left\langle
		G\left(\frac{e^{i\theta}}{r}\right)x,y
		\right\rangle
		\right|^2\,d\theta 
		\leq
		\|G\|_{\infty,\CA_r}^2 .
	\end{align}
	Then
{\small 	\begin{align*}
		\left|\left\langle (A_0+zG^+(z))x,y\right\rangle\right|
		=
		\left|
		\left\langle
		G(z)x,y
		\right\rangle
		-
		\sum_{k=1}^{\infty}
		\left\langle B_{-k}x,y\right\rangle z^{-k}
		\right|  
		& \leq
		|\langle G(z)x,y\rangle|
		+
		\sum_{k=1}^{\infty}
		\frac{|\langle B_{-k}x,y\rangle|}{r^k}\\
		& \leq \|G\|_{\infty, \CA_r}+	\left(
		\sum_{k=1}^{\infty}
		|\langle B_{-k}x,y\rangle|^2 r^{2k}
		\right)^{1/2}
		\left(
		\sum_{k=1}^{\infty}
		\frac1{r^{4k}}
		\right)^{1/2} \\
		& \leq \|G\|_{\infty, \CA_r}\left[1+\left(\sum_{k=1}^{\infty}
		\frac1{r^{4k}}
		\right)^{1/2} \right] \quad \text{[by \eqref{eqn_Parseval}]}\\
		&=\|G\|_{\infty,\CA_r}
		\left(
		1+\frac{1}{\sqrt{r^4-1}}
		\right) \qquad \text{and} \\
\left|\left\langle zG^+(z)x,y\right\rangle\right|
	=
	\left|
	\left\langle
	G(z)x,y
	\right\rangle
	-
	\sum_{k=0}^{\infty}
	\left\langle B_{-k}x,y\right\rangle z^{-k}
	\right|  
	 & \leq
	|\langle G(z)x,y\rangle|
	+
	\sum_{k=0}^{\infty}
	\frac{|\langle B_{-k}x,y\rangle|}{r^k}\\
	& \leq \|G\|_{\infty, \CA_r}+	\left(
	\sum_{k=0}^{\infty}
	|\langle B_{-k}x,y\rangle|^2 r^{2k}
	\right)^{1/2}
	\left(
	\sum_{k=0}^{\infty}
	\frac1{r^{4k}}
	\right)^{1/2} \\
	& \leq \|G\|_{\infty, \CA_r}\left[1+\left(\sum_{k=0}^{\infty}
	\frac1{r^{4k}}
	\right)^{1/2} \right] \quad \text{[by \eqref{eqn_Parseval}]}\\
	&=\|G\|_{\infty,\CA_r}
	\left(
	1+\frac{r^2}{\sqrt{r^4-1}}
	\right).
\end{align*}  
}
Consequently, 
		\begin{align*}
	\|A_0+zG^+(z)\|&=\sup_{\|x\|=\|y\|=1}\left|
	\left\langle
	(A_0+zG^+(z))x,y
	\right\rangle
	\right|
	\leq
	\|G\|_{\infty,\CA_r}
	\left(
	1+\frac1{\sqrt{r^4-1}}
	\right) \quad \text{and} \\
	\|zG^+(z)\|&=\sup_{\|x\|=\|y\|=1}\left|
	\left\langle
	(zG^+(z))x,y
	\right\rangle
	\right|
	\leq
	\|G\|_{\infty,\CA_r}
	\left(
	1+\frac{r^2}{\sqrt{r^4-1}}
	\right).
	\end{align*}
Let $w \in r\T$. Arguing similarly as above, one can obtain estimates for $\|A_0+wG^-(w)\|$ and $\|wG^-(w)\|$ so that the following holds:
	\begin{align}\label{eqn_2007}
		\|A_0+zG^+(z)\|_{\infty, r\T}, \ \|A_0+wG^-(w)\|_{\infty, r\T} & \ \leq \|G\|_{\infty, \CA_r}\left(1+\frac{1}{\sqrt{r^4-1}}\right) \quad \text{and} \notag \\
		\|zG^+(z)\|_{\infty, r\T}, \ \|wG^-(w)\|_{\infty, r\T} & \ \leq \|G\|_{\infty, \CA_r}\left(1+\frac{r^2}{\sqrt{r^4-1}}\right).
	\end{align} 
	It now follows from \eqref{eqn_2005} and \eqref{eqn_2007} that
	\begin{align*}
		\|G(T)\|
		& \leq  
		\frac{r}{1+r^2}
		\left[
		r\|zG^+(z)+A_0\|_{\infty, r\T}
		+
		r\|wG^-(w)+A_0\|_{\infty, r\T}
		+
		\|G^+(z)\|_{\infty, r\T}
		+
		\|G^-(w)\|_{\infty, r\T}
		\right]\\
		&\leq
		\frac{r}{1+r^2}
		\bigg[
		2r\left(1+\frac{1}{\sqrt{r^4-1}}\right)
		+
		\frac{2}{r}\left(1+\frac{r^2}{\sqrt{r^4-1}}\right)
		\bigg]
		\|G\|_{\infty,\CA_r}\\
		&= 2\left(1+\frac{2r^2}{(r^2+1)\sqrt{r^4-1}}\right)
		\|G\|_{\infty,\CA_r}.
	\end{align*}
	The proof is now complete.
\end{proof}

Next, we discuss some consequences of Theorem \ref{thm_206} on optimal spectral constant and similarity. For the optimal spectral constant $K(\CA_r)$ for operators in $Q\A_r$, the authors of \cite{Pascoe, Pal_Pascoe} proved that $K(\CA_r) \to 2$ as $r \to \infty$. The next result provides sharper bounds for $K(\CA_r)$, from which its asymptotic behavior follows. 

\begin{cor}
	Suppose $K(\CA_r)$ and $K^{\text{full}}(\CA_r)$ are the smallest constants such that $\CA_r$ is a $K(\CA_r)$-spectral set and a complete $K^{\text{full}}(\CA_r)$-spectral set, respectively, for every operator in $Q\A_r$. Then 
	\[
	2 \leq K(\CA_r) \leq K^{\text{full}}(\CA_r) \leq 2\left(1+\frac{2r^2}{(r^2+1)\sqrt{r^4-1}}\right) \quad \text{and so,} \quad \lim_{r \to \infty}K(\CA_r)=\lim_{r \to \infty}K^{\text{full}}(\CA_r)=2.
	\]
\end{cor}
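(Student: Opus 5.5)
The corollary has three parts: the lower bound, the chain of middle inequalities, and the limit. I will treat them in that order.

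\emph{Middle inequalities.} By definition, any complete $K$-spectral set is a $K$-spectral set (take $n=1$), so $K(\CA_r)\leq K^{\text{full}}(\CA_r)$. Theorem \ref{thm_206} shows that $\CA_r$ is a complete $K_t$-spectral set for every $T\in Q\A_r$. Minimality of $K^{\text{full}}(\CA_r)$ then gives $K^{\text{full}}(\CA_r)\leq K_t$. The spectral inclusion $\sigma(T)\subseteq \CA_r$ is automatic from $\|T\|,\|T^{-1}\|\leq r$.

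\emph{Lower bound.} I will not reprove $2\leq K(\CA_r)$; I will invoke Tsikalas \cite{TsikalasII}, as quoted in the introduction. One subtlety needs checking. Tsikalas' bound concerns the optimal constant over the class of operators with $\|T\|,\|T^{-1}\|\leq r$, which is exactly $Q\A_r$, and uses the same function algebra $\text{Rat}(\CA_r)$. So his lower bound transfers verbatim to the $K(\CA_r)$ defined in the corollary. If his result is phrased with $\sup$ over holomorphic functions on $\A_r$ instead, I would note that rational functions are dense by Runge's theorem, so the constants coincide.

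\emph{Limit.} Compute $K_t$ as $r\to\infty$. We have $\frac{2r^2}{(r^2+1)\sqrt{r^4-1}}\sim \frac{2}{r^2}\to 0$, so $K_t\to 2$. Squeezing $2\leq K(\CA_r)\leq K^{\text{full}}(\CA_r)\leq K_t$ yields both limits equal to $2$. For $r>1$ every quantity is finite, so this is legitimate.

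The only real obstacle is the lower bound, specifically confirming that Tsikalas' normalization matches ours. Everything else is immediate from Theorem \ref{thm_206} and elementary asymptotics.
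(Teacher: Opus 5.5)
Your proposal is correct and is essentially the paper's argument: the lower bound $2\le K(\CA_r)$ is quoted from Tsikalas, the upper bounds come from Theorem~\ref{thm_206} together with the trivial inequality $K(\CA_r)\le K^{\text{full}}(\CA_r)$, and the limits follow by squeezing since $K_t\to 2$. Your normalization worry is harmless: if Tsikalas' bound is stated for a rescaled annulus $\{\rho<|z|<1\}$, the substitution $T\mapsto r^{-1}T$ with $\rho=r^{-2}$ identifies the two operator classes and the two function algebras isometrically, so the optimal constants coincide.
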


\begin{proof}
	Tsikalas \cite{TsikalasII} proved that $ 2 \leq K(\CA_r)$. The desired conclusion follows from Theorem \ref{thm_206}.
\end{proof}

Recall that two operators $T_1$ and $T_2$ on a Hilbert space $\HS$ are said to be similar if there exists an invertible operator $S$ on $\HS$ such that $T_2=S^{-1}T_1S$. It follows from the discussion preceding Theorem 11.8 in \cite{Paulsen} that every operator in $Q\A_r$ is similar to an operator having $\CA_r$ as a complete spectral set. Our next result provides a sharper estimate for the similarity constant $\|S\|\cdot \|S^{-1}\|$.

\begin{cor}
	Let $T \in Q\A_r$ be acting on a Hilbert space $\HS$. Then there exists an invertible operator $S$ on $\HS$ such that $\|S\|\cdot \|S^{-1}\| \leq K_t$, where 
	\[
	K_t=2\left(1+\frac{2r^2}{(r^2+1)\sqrt{r^4-1}}\right)
	\] 
	and $S^{-1}TS$ has $\CA_r$ as a complete spectral set. 
\end{cor}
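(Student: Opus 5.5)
The plan is to derive this directly from Theorem \ref{thm_206} together with Paulsen's similarity theorem, the result behind the discussion preceding Theorem 11.8 in \cite{Paulsen}. That theorem says: if $X$ is a compact set and $\rho:\text{Rat}(X)\to \mathcal{B}(\HS)$ is a unital homomorphism that is completely bounded with $\|\rho\|_{cb}\le K$, then there is an invertible $S$ with $\|S\|\,\|S^{-1}\|\le K$ such that $S^{-1}\rho(\cdot)S$ is completely contractive.

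\emph{Step 1: build the homomorphism.} Fix $T\in Q\A_r$. Since $\|T\|,\|T^{-1}\|\le r$, we have $\sigma(T)\subseteq \CA_r$. Hence the rational functional calculus $\rho(g)=g(T)$ defines a unital algebra homomorphism $\rho:\text{Rat}(\CA_r)\to\mathcal{B}(\HS)$.

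\emph{Step 2: bound its cb-norm.} By Theorem \ref{thm_206}, for every $n$ and every $G\in M_n(\text{Rat}(\CA_r))$,
\[
\|\rho^{(n)}(G)\|=\|G(T)\|\le K_t\|G\|_{\infty,\CA_r}.
\]
Thus $\|\rho\|_{cb}\le K_t$.

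\emph{Step 3: apply Paulsen's theorem.} This produces an invertible $S$ with $\|S\|\,\|S^{-1}\|\le\|\rho\|_{cb}\le K_t$ such that $g\mapsto S^{-1}g(T)S$ is completely contractive on $\text{Rat}(\CA_r)$. Since $S^{-1}g(T)S=g(S^{-1}TS)$ and $\sigma(S^{-1}TS)=\sigma(T)\subseteq\CA_r$, this says exactly that $\CA_r$ is a complete spectral set for $S^{-1}TS$.

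The main technical point is Paulsen's theorem itself. It is proved via Wittstock/Haagerup–Paulsen: a completely bounded map $\rho$ factors as $\rho(g)=V^*\pi(g)W$ with $\pi$ a $*$-representation of $C(\partial\CA_r)$ and $\|V\|\|W\|=\|\rho\|_{cb}$. Because $\rho$ is multiplicative, one then constructs a similarity from the semi-invariant subspace structure. I expect simply to cite it rather than reprove it. The only care needed is to note that $\text{Rat}(\CA_r)$ is regarded as an operator algebra inside $C(\partial\CA_r)$, and that its sup norm over $\CA_r$ equals the sup norm over the boundary by the maximum modulus principle.
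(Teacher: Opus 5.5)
Your proposal is correct and follows essentially the same route as the paper: the complete $K_t$-spectral set estimate of Theorem~\ref{thm_206} combined with Paulsen's similarity theorem. The paper simply cites this as Corollary~9.12 in \cite{Paulsen}, and your Steps 1--3 (building $\rho$, bounding $\|\rho\|_{cb}$, conjugating by $S$) unpack that corollary, just as the paper does explicitly later in the proof of Theorem~\ref{thm_sim}.
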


\begin{proof}
	Let $T \in Q\A_r$ be acting on a Hilbert space $\HS$. By Theorem \ref{thm_206}, $\CA_r$ is a complete $K_t$-spectral set. It follows from Corollary 9.12 in \cite{Paulsen} that there exists an invertible operator $S$ such that $\CA_r$ is a
	complete spectral set for $S^{-1}TS$ and $\|S^{-1}\|\cdot \|S\| \leq K_t$, which completes the proof. 
\end{proof}

For an invertible operator $T$, it is proved in \cite{Pas-McCull} that $T \in Q\A_r$ if and only if $\beta(T^*, T)\geq 0$. Moreover, an operator $T$ in $Q\A_r$ dilates to a quantum annulus unitary. By a quantum annulus unitary, we mean an invertible operator $J$ with $\beta(J^*, J)=0$. For a quantum annulus unitary $J$, we considered a unitary in the proof of Theorem \ref{thm_206} given by
\[
U=r(1+r^2)^{-1}(J+J^{-*}).
\] 
Such a unitary corresponding to a quantum annulus unitary $J$ appeared first in \cite{Pal_Pascoe}. In this direction, we define a map $\Phi_r: \C\setminus\{0\} \to \C$ as
\[
\Phi_r(z)=\frac{r}{1+r^2}\left(z+\frac{1}{\overline{z}}\right) \quad \text{and so,} \quad \Phi_r(T)=\frac{r}{1+r^2}(T+T^{-*})
\]
for an invertible operator $T$. The following result characterizes operators in $Q\A_r$ and quantum annulus unitaries in terms of $\Phi_r(T)$.

\begin{thm}\label{thm_204}
	Let $T$ be an invertible operator. Then $T \in Q\A_r$ if and only if $\Phi_r(T)$ is a contraction. Also, $T$ is a quantum annulus unitary if and only if $\Phi_r(T)$ is a unitary. 
\end{thm}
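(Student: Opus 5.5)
The plan is to compute $\Phi_r(T)^*\Phi_r(T)$ directly and compare it with $\beta(T^*,T)$. Write $P=T^*T$, which is positive and invertible. Since
\[
(T+T^{-*})^*(T+T^{-*}) = (T^*+T^{-1})(T+T^{-*}) = T^*T + T^*T^{-*} + T^{-1}T + T^{-1}T^{-*} = P + 2I + P^{-1},
\]
where we used $T^{-1}T^{-*}=(T^*T)^{-1}=P^{-1}$ and $T^*T^{-*}=I$, we obtain
\[
\Phi_r(T)^*\Phi_r(T) = \frac{r^2}{(1+r^2)^2}\left(P+P^{-1}+2I\right).
\]
Since $(1+r^2)^2/r^2 = r^2+r^{-2}+2$, this gives the key identity
\[
I-\Phi_r(T)^*\Phi_r(T) = \frac{r^2}{(1+r^2)^2}\,\beta(T^*,T).
\]

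For the first assertion, $\Phi_r(T)$ is a contraction if and only if $I-\Phi_r(T)^*\Phi_r(T)\geq 0$. By the identity this holds if and only if $\beta(T^*,T)\geq 0$, and by the McCullough–Pascoe characterization \cite{Pas-McCull} that is equivalent to $T\in Q\A_r$. Alternatively, one can avoid citing \cite{Pas-McCull}: by the spectral theorem, $\beta(T^*,T)\ge 0$ if and only if $\sigma(P)\subseteq[r^{-2},r^2]$, because $t+t^{-1}\le r^2+r^{-2}$ exactly for $t\in[r^{-2},r^2]$. This spectral condition is the same as $\|T\|\le r$ and $\|T^{-1}\|\le r$.

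For the second assertion, the identity shows that $\Phi_r(T)$ is an isometry if and only if $\beta(T^*,T)=0$. It remains to get co-isometry. Here I would compute $\Phi_r(T)\Phi_r(T)^*$ in the same way:
\[
(T+T^{-*})(T^*+T^{-1}) = TT^*+2I+(TT^*)^{-1}.
\]
We have $\beta(T,T^*)=T^{-*}\beta(T^*,T)T^{*}$. This follows because $TT^* = T^{-*}(T^*T)T^*$ and $(TT^*)^{-1}=T^{-*}(T^*T)^{-1}T^*$, and it is the same conjugation trick used in the paper's second example. Hence $\beta(T^*,T)=0$ forces $\beta(T,T^*)=0$, so $\Phi_r(T)\Phi_r(T)^*=I$ as well, and $\Phi_r(T)$ is unitary. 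Conversely, if $\Phi_r(T)$ is unitary then it is in particular an isometry, so $\beta(T^*,T)=0$.

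The main obstacle is only bookkeeping. One has to be careful with the cross terms $T^*T^{-*}=I$ and $TT^{-1}=I$ in the expansions, and to remember that in infinite dimensions an isometry need not be unitary. That is why the separate $\Phi_r(T)\Phi_r(T)^*$ computation, via the similarity of $\beta(T,T^*)$ and $\beta(T^*,T)$, is needed.
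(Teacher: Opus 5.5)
Your proof is correct and follows the paper's argument. It uses the same identity $I-\Phi_r(T)^*\Phi_r(T)=\frac{r^2}{(1+r^2)^2}\beta(T^*,T)$ together with the McCullough--Pascoe criterion. For unitarity it uses the same observation that $\beta(T,T^*)$ is similar to $\beta(T^*,T)$; the paper writes this as $\beta(T,T^*)=T\beta(T^*,T)T^{-1}$, which coincides with your $T^{-*}\beta(T^*,T)T^{*}$ because $\beta(T^*,T)$ commutes with $T^*T$. Your added spectral-theorem remark, that $\beta(T^*,T)\geq 0$ iff $\sigma(T^*T)\subseteq[r^{-2},r^2]$, makes the first part self-contained rather than dependent on the citation.
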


\begin{proof}
	Let $a_r=r(1+r^2)^{-1}$. A straightforward calculation shows that
	\begin{equation}\label{eqn_4001}
		I-\Phi_r(T)^*\Phi_r(T)=a_r^2\left[(r^2+r^{-2})-T^*T-(T^*T)^{-1}\right]=a_r^2\beta(T^*, T).
	\end{equation}
	It was proved in \cite{Pas-McCull} that an invertible operator $T \in Q\A_r$ if and only if $\beta(T^*, T) \geq 0$. It follows from \eqref{eqn_4001} that $T \in Q\A_r$ if and only if $I-\Phi_r(T)^*\Phi_r(T) \geq 0$, that is, $\|\Phi_r(T)\| \leq 1$. Now, suppose $J$ is a quantum annulus unitary. Then $J^*$ is also a quantum annulus unitary since $\beta(J, J^*)=J\beta(J^*, J)J^{-1}$. We have by \eqref{eqn_4001} that
	$I-\Phi_r(J)^*\Phi_r(J)=a_r^2\beta(J^*, J)=0$ and $I-\Phi_r(J)\Phi_r(J)^*=a_r^2\beta(J, J^*)=0$. Thus, $\Phi_r(J)$ is a unitary. The converse follows by reversing the above argument.	
\end{proof}

	\section{Complete $K$-spectral set for doubly commuting operators in $Q\A_r$}\label{sec_04}
	
	\noindent The authors of \cite{Pal_Pascoe} proved that the polyannulus $\CA_r^d$ is a $K_{dc}$-spectral set for doubly commuting $d$-tuples of operators in $Q\A_r$, where 
	\[
	K_{dc}=\left(\frac{3r^2-1}{r^2-1}\right)^d.
	\] 
	Also, it was proved as Theorem 4.3 in \cite{Pal_Pascoe} that if $K_{dc}(\CA_r^d)$ denote the smallest constant for which $\CA_r^d$ is a $K_{dc}(\CA_r^d)$-spectral set for every doubly commuting $d$-tuple in $Q\A_r$, then 
	\[
	2^d \leq K_{dc}(\CA_r^d) \leq K_{dc} \quad \text{and so,} \quad 2^d \leq \underset{r \to \infty}{\lim}K_{dc}(\CA_r^d) \leq 3^d.
	\] 
In the case $d=1$, that is, for $T \in Q\A_r$, we have that $\|g(T)\| \leq \left[(3r^2-1)/(r^2-1)\right]\|g\|_{\infty, \CA_r}$ for every $g \in \text{Rat}(\CA_r)$. Comparing the constant $(3r^2-1)/(r^2-1)$ with the constant
\[
K_t= 2\left(1+\frac{2r^2}{(r^2+1)\sqrt{r^4-1}}\right)
\]
obtained in Theorem \ref{thm_206}, we have that
\[
2\left(1+\frac{2r^2}{(r^2+1)\sqrt{r^4-1}}\right) \lneq \frac{3r^2-1}{r^2-1}.
\]
Moreover, as $r \to \infty$, the left-hand side tends to $2$, while the right-hand side tends to $3$. This indicates that the bound $K_{dc}=\left[(3r^2-1)/(r^2-1)\right]^d$ is not optimal in the multivariable setting. This gap arises from the fact that the proof of Theorem 4.3 in \cite{Pal_Pascoe} is independent of dilation theorem for doubly commuting operators in $Q\mathbb{A}_r$, unlike the single-operator case presented in Theorem \ref{thm_206}. This naturally leads to the following questions:

\begin{enumerate}[leftmargin=*] 
	
	\item Is the closed polyannulus $\CA_r^d$ is a complete $K$-spectral set for doubly commuting $d$-tuples of operators in $Q\A_r$? 
	
	\item Can the bound $K_{dc}$ be improved to a smaller constant $K$ so that $\CA_r^d$ becomes a $K$-spectral set (or, a complete $K$-spectral set) for doubly commuting $d$-tuples of operators in $Q\A_r$?

\item Let $K_{dc}(\CA_r^d)$ and $K_{dc}^{\mathrm{full}}(\CA_r^d)$ denote the smallest constants for which $\CA_r^d$ is a $K_{dc}(\CA_r^d)$-spectral set and a complete $K_{dc}^{\mathrm{full}}(\CA_r^d)$-spectral set, respectively, for every doubly commuting $d$-tuple of operators in $Q\A_r$. Then what is the asymptotic behavior of these optimal constants?
\end{enumerate}

In this section, we provide answers to these questions. We first establish that
	\begin{align}\label{eqn_pretext}
	\|G(T_1, \dotsc, T_d)\| \leq \left[2\left(1+\frac{2r^2}{(r^2+1)\sqrt{r^4-1}}\right)\right]^d\|G\|_{\infty, \CA_r^d}
	\end{align}
	for every doubly commuting $d$-tuple of operators in $Q\A_r$ and $G \in M_n(\text{Rat}(\CA_r^d))$. The proof capitalizes on the dilation theorem for doubly commuting operators in $Q\A_r$ presented in Theorems \ref{thm_mainQAr} and \ref{thm_gen}. As a consequence, we obtain sharper bounds for the optimal complete spectral constant  $K_{dc}^{\text{full}}(\CA_r^d)$ showing that
	$
	\underset{r \to \infty}{\lim}K_{dc}^{\text{full}}(\CA_r^d)=2^d.
	$
We also provide analogous estimates for the optimal $K$-spectral constant (not necessarily complete) for doubly commuting tuples in $Q\A_r$. For the convenience of the reader, we first establish \eqref{eqn_pretext} in the case $n=1$, that is, when $G \in \text{Rat}(\CA_r^d)$.
	
	\begin{thm}\label{thm_401}
		Let $(T_1, \dotsc, T_d)$ be a doubly commuting tuple of operators in $Q\A_r$ acting on a Hilbert space $\HS$ and $g \in \text{Rat}(\CA_r^d)$. Then
		\[
		\|g(T_1, \dotsc, T_d)\| \leq \left[2\left(1+\frac{2r^2}{(r^2+1)\sqrt{r^4-1}}\right)\right]^d\|g\|_{\infty, \CA_r^d}.
		\]
	\end{thm}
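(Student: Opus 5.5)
By Theorem \ref{thm_gen}, $\underline{T}$ extends simultaneously to the doubly commuting tuple $\underline{J}=(J_1,\dotsc,J_d)$ of quantum annulus unitaries on $\KS=\HS\otimes(\C^2)^{\otimes d}$. Each $J_i^{\pm1}$ is block upper triangular with $(1,1)$-entry $T_i^{\pm1}$. Hence for every rational $g$ with singularities off $\CA_r^d$ we have $g(\underline{T})=P_\HS g(\underline{J})|_\HS$, and in fact $g(\underline{T})=g(\underline{J})|_\HS$. It therefore suffices to prove $\|g(\underline{J})\|\le K_t^d\|g\|_{\infty,\CA_r^d}$.

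The main step is to iterate the one-variable argument of Theorem \ref{thm_206} one coordinate at a time. Set $U_i=\Phi_r(J_i)$, which is unitary by Theorem \ref{thm_204}. Since the $J_i$ doubly commute, each $U_i$ commutes with $J_j$ and $J_j^{-1}$ for $j\neq i$. Fix $(z_2,\dots,z_d)$ and view $g$ as a function of $z_1$. Write the Laurent splitting $g=a_0+z_1g^+ + z_1^{-1}g^-$ in the first variable, where the coefficients $a_0,g^\pm$ are now operator-valued holomorphic functions of the remaining variables. Rather than fixing scalars, we want an inequality that treats the remaining variables as operators.

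To this end, view $\mathcal{B}(\KS)$-valued calculus through the following functional: $h\mapsto\|h(J_1,\dots,J_k,z_{k+1},\dots,z_d)\|$. The estimate \eqref{eqn_2005} in Theorem \ref{thm_206} used only three facts: $J$ is a quantum annulus unitary, $\Phi_r(J)$ is unitary, and $r\overline{\D}$ is a spectral set for $J^{\pm1}$ (by von Neumann). The coefficient estimates \eqref{eqn_2007} used only Parseval and Cauchy--Schwarz applied to the Laurent coefficients. Both arguments carry over verbatim when the coefficients $B_k$ are replaced by elements of a $C^*$-algebra of operators commuting with $J_1$. To make this rigorous, I would induct on $d$ using the following claim.

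\textbf{Claim.} If $J$ is a quantum annulus unitary and $F$ is holomorphic on a neighbourhood of $\CA_r$ with values in a $C^*$-algebra $\mathcal{C}\subseteq\mathcal{B}(\KS)$ whose elements doubly commute with $J$, then $\|F(J)\|\le K_t\sup_{\zeta\in\CA_r}\|F(\zeta)\|$.

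The proof of the claim copies that of Theorem \ref{thm_206}. The one change is in bounding $\|zF^+(z)+A_0\|$ on $r\T$: use vectors $x,y\in\KS$ and apply Parseval to $\zeta\mapsto\langle F(\zeta)x,y\rangle$. The unitary conjugation step also needs $U=\Phi_r(J)$ to commute with $F^\pm(J)$. This holds because the coefficients commute with $J$ and $J^*$. Von Neumann's inequality for $J$ with operator coefficients from the commutant also requires double commutation; here I would use the fact that $J/r$ is a contraction and dilate it to a unitary commuting with $\mathcal{C}'$. Alternatively, and more concretely, apply the claim with $F(\zeta)=g(\zeta,J_2,\dots,J_d)$. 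By double commutativity, $F$ takes values in the $C^*$-algebra generated by $J_2,\dots,J_d$, which doubly commutes with $J_1$. This gives
\[
\|g(\underline{J})\|\le K_t\sup_{\zeta\in\CA_r}\|g(\zeta,J_2,\dots,J_d)\|.
\]
Applying the same step for each fixed $\zeta$ to the $(d-1)$-tuple $(J_2,\dots,J_d)$ and iterating yields $K_t^d\|g\|_{\infty,\CA_r^d}$.

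The obstacle I expect is the operator-coefficient version of the spectral-set step: $\|\sum_k B_kJ^k\|\le\sup_{|z|\le r}\|\sum_kB_kz^k\|$ when the $B_k$ commute with $J$ and $J^*$. I would handle it by noting that $\|J\|\le r$ and passing to Sz.-Nagy's minimal unitary dilation of $J/r$. The commutant lifting property preserves double commutation, so the coefficients extend to operators commuting with a unitary $W$. The problem then reduces to bounding $\|\sum B_k r^kW^k\|$ via the spectral theorem for $W$ jointly with the commuting coefficients: on the spectral decomposition of $W$, the norm is at most the supremum over $|z|=r$. A simpler alternative, if this becomes delicate, is to use the joint structure directly. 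Since the $J_j$ for $j\ge2$ act on different tensor factors than $J_1$, one can realize $F(\zeta)$ on a tensor factor, which makes the scalar-coefficient von Neumann inequality applicable slice by slice.
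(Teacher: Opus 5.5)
Your plan is essentially the paper's proof. You extend via Theorem \ref{thm_gen}, set $F(z_1)=g(z_1,J_2,\dots,J_d)$, and rerun the argument of Theorem \ref{thm_206} with operator coefficients: Parseval applied to $\langle F(\zeta)x,y\rangle$, plus a von Neumann inequality for coefficients doubly commuting with $J_1^{\pm1}$. The paper quotes that inequality as Hartz's Proposition 2.3, and your lifting of the coefficients to the Sz.-Nagy dilation of $J_1/r$ reproves it. You then iterate in the remaining variables, as the paper does.

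Drop two side remarks. First, $U=\Phi_r(J)$ commutes with $J$ only when $J$ is normal, so in general it does not commute with $F^{\pm}(J)$. Identity \eqref{eqn_4002} never uses this---it only needs $B_0$ to commute with $J^*$ and $U^*$, and $F^{\pm}(J)$ to commute with $J$. Second, your tensor-factor fallback would not work, because $J_1$ and $J_j$ both act on the common $\HS$ leg.
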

	
	\begin{proof}
		We have by Theorem \ref{thm_gen} that there exists a doubly commuting tuple $(J_1, \dotsc, J_d)$ of quantum annulus unitaries acting on a Hilbert space $\KS \supseteq \HS$ such that 
		\[
		g(T_1, \dotsc, T_d)=g(J_1, \dotsc, J_d)|_{\HS}
		\] 
		for every $g \in \text{Rat}(\CA_r^d)$. Let us define
		\[
		(U_1, \dotsc, U_d)=\left(\frac{r}{1+r^2}(J_1+J_1^{-*}), \ \dotsc, \ \frac{r}{1+r^2}(J_d+J_d^{-*}) \right). 
		\]
		It follows from Theorem \ref{thm_204} that each $U_i$ is a unitary on $\KS$. Moreover, $(U_1, \dotsc, U_d)$ is a doubly commuting tuple, which follows from the doubly commutativity of the tuple $(J_1, \dotsc, J_d)$. To clarify the argument, we restrict attention to the case $d=2$ and divide the proof into several steps. The general case follows by an inductive argument.
		
		\medskip 
		
		\noindent \textit{Step 1.} Let $g \in \text{Rat}(\CA_r^2)$. Since $g$ is holomorphic on a neighborhood of $\CA_r^2$, there exist scalars $s_1,s_2$ satisfying $
		0<s_1<r^{-1}<r<s_2$ such that $g$ is holomorphic on $A_{s_1,s_2}^2=A_{s_1,s_2} \times A_{s_1,s_2}$, where
		$A_{s_1,s_2}=\{z\in \mathbb{C}: s_1<|z|<s_2\}$. Thus, $g$ admits the Laurent series representation given by
		\begin{equation}\label{eqn_0301}
			g(z_1, z_2)=\overset{\infty}{\underset{n=-\infty}{\sum}}\ \overset{\infty}{\underset{m=-\infty}{\sum}}a_{n,m} \ z_1^nz_2^m \qquad \text{for} \  (z_1, z_2) \in A_{s_1, s_2}^2.
		\end{equation}
		The above series converges absolutely and uniformly on $\CA_r^2$. Therefore, we can re-write
		\[
		g(z_1, z_2)=\overset{\infty}{\underset{n=-\infty}{\sum}}b_n(z_2)z_1^n,
		\quad \text{where} \quad
		b_n(z_2)=\overset{\infty}{\underset{m=-\infty}{\sum}}a_{n,m}z_2^m.
		\]
		Moreover, each $b_n$ is holomorphic on $\CA_r$ and so, $b_n(J_2)$ is a well-defined operator acting on $\KS$. One can refer to Chapter II in \cite{Range} for further details. Define a holomorphic map $F: \CA_r \to \mathcal{B}(\KS)$ as $F(z_1)=g(z_1, J_2)=\overset{\infty}{\underset{n=-\infty}{\sum}}b_n(J_2)z_1^n$. Using the same decomposition argument as in Theorem \ref{thm_206} to the $\mathcal{B}(\KS)$-valued holomorphic map $F$, we can write
		\[
		F(z_1)=\overset{\infty}{\underset{k=-\infty}{\sum}}B_kz_1^k=\overset{\infty}{\underset{k=0}{\sum}}B_kz_1^k+\overset{\infty}{\underset{k=1}{\sum}}B_{-k}z_1^{-k}=B_0+z_1F^+(z_1)+z_1^{-1}F^-(z_1^{-1}),
		\]
		where $\displaystyle B_k=b_k(J_2), \, F^+(z_1)=\overset{\infty}{\underset{n=1}{\sum}}b_n(J_2)z_1^{n-1}$ and $\displaystyle F^-(z_1)=\overset{\infty}{\underset{n=1}{\sum}}b_{-n}(J_2)z_1^{n-1}$. Let $z_1 \in \CA_r$. Then
		\[
		\overset{\infty}{\underset{k=-\infty}{\sum}}\left\|B_kz_1^k\right\| =\overset{\infty}{\underset{k=-\infty}{\sum}}\left\|b_k(J_2)z_1^k\right\|=\overset{\infty}{\underset{k=-\infty}{\sum}}\left\|\overset{\infty}{\underset{m=-\infty}{\sum}}a_{k,m}J_2^mz_1^k\right\|\leq \overset{\infty}{\underset{k, m=-\infty}{\sum}}r^{|m|}|a_{k, m}|\cdot|z_1|^k <\infty 
		\]
as the Laurent series in \eqref{eqn_0301} converges absolutely on $\CA_r^2$ and $r^{-1} \leq \|J_2\| \leq r$. Next, we claim that
		\begin{align}\label{eqn_4002}
			U_1F(J_1)U_1^*=a_r\left[U_1\left(B_0+J_1F^+(J_1)\right)J_1^*+J_1^{-*}\left(B_0+J_1^{-1}F^-(J_1^{-1})\right)U_1^*+U_1F^+(J_1)+F^-(J_1^{-1})U_1^*\right], \notag \\
		\end{align}
		where $a_r=r(1+r^2)^{-1}$. Also, $F(J_1)=\overset{\infty}{\underset{n=-\infty}{\sum}}b_n(J_2)J_1^n, \  F^+(J_1)=\overset{\infty}{\underset{n=1}{\sum}}b_n(J_2)J_1^{n-1}$ and $ F^-(J_1^{-1})=\overset{\infty}{\underset{n=1}{\sum}}b_{-n}(J_2)J_1^{-n+1}$, which are operators on $\KS$. A routine computation shows that
		\begin{align}\label{eqn_4003}
			& U_1J_1F^+(J_1)U_1^*=a_rU_1J_1F^+(J_1)(J_1^*+J_1^{-1})=a_r\left[U_1J_1F^+(J_1)J_1^*+U_1F^+(J_1)\right], \notag \\
			&
			U_1J_1^{-1}F^-(J_1^{-1})U_1^*=a_r(J_1+J_1^{-*})J_1^{-1}F^-(J_1^{-1})U_1^*=a_r\left[F^-(J_1^{-1})U_1^*+J_1^{-*}J_1^{-1}F^-(J_1^{-1})U_1^*\right] \ \text{and} \notag \\
			&
			U_1U_1^*=a_r^2(J_1+J_1^{-*})(J_1^*+J_1^{-1})=a_r^2(J_1J_1^*+2J_1^{-*}J_1^*+J_1^{-*}J_1^{-1})=a_r\left(U_1J_1^*+J_1^{-*}U_1^*\right).
		\end{align}
		As $J_2$ doubly commutes with $J_1$ and $U_1$, the same holds for $B_0=b_0(J_2)$. Then
		\begin{align*}
			& \quad \ U_1F(J_1)U_1^*\\
			&=U_1\left[B_0+J_1F^+(J_1)+J_1^{-1}F^-(J_1^{-1})\right]U_1^*\\
			&=U_1B_0U_1^*+U_1J_1F^+(J_1)U_1^*+U_1J_1^{-1}F^-(J_1^{-1})U_1^*\\
			&=U_1U_1^*B_0+U_1J_1F^+(J_1)U_1^*+U_1J_1^{-1}F^-(J_1^{-1})U_1^* \quad \quad  &\tag*{[\text{since $B_0, U_1$ doubly commute}]}\\
			&=a_r\left(U_1J_1^*B_0+J_1^{-*}U_1^*B_0+U_1J_1F^+(J_1)J_1^*+U_1F^+(J_1)+F^-(J_1^{-1})U_1^*+J_1^{-*}J_1^{-1}F^-(J_1^{-1})U_1^*\right) \\
			& \tag*{[by \eqref{eqn_4003}]}\\
			&=a_r\left(U_1B_0J_1^*+J_1^{-*}B_0U_1^*+U_1J_1F^+(J_1)J_1^*+U_1F^+(J_1)+F^-(J_1^{-1})U_1^*+J_1^{-*}J_1^{-1}F^-(J_1^{-1})U_1^*\right)\\
			& \tag*{[\text{since $B_0$ doubly commutes with $J_1, U_1$}]}\\
			&=a_r\bigg[U_1\left(B_0+J_1F^+(J_1)\right)J_1^*+J_1^{-*}\left(B_0+J_1^{-1}F^-(J_1^{-1})\right)U_1^*+U_1F^+(J_1)+F^-(J_1^{-1})U_1^*\bigg]
		\end{align*}
		and so the claim in \eqref{eqn_4002} holds. 
		
		\medskip 
		
		\noindent \textit{Step 2.} We have by Proposition 2.3 in \cite{Hartz} that if $p(z)=A_0+\dotsc + A_nz^n$ is a polynomial with operator coefficients, $T$ is a contraction that doubly commutes with each $A_k$ and $p(T)=A_0+A_1T+\dotsc+A_nT^n$, then $\|p(T)\| \leq \|p\|_{\infty, \overline{\D}}=\|p\|_{\infty, \T}$, where the last equality follows from maximum principle for vector-valued holomorphic functions (see Theorem 1.2.1 in \cite{Gohberg}). Using approximation arguments, one can show that if $f(z)=\sum_{k=0}^\infty A_kz^k$ such that the series converges absolutely on $\overline{\D}$ and each $A_k$ doubly commutes with a given contraction $T$, then 
		\[
		\|f(T)\|=\left\|\overset{\infty}{\underset{k=0}{\sum}}A_kT^k\right\| \leq \|f\|_{\infty, \T}.
		\]
		Note that
		\begin{small}\begin{align*}
				B_0+zF^+(z)=\overset{\infty}{\underset{k=0}{\sum}}B_kz^k, \ B_0+wF^-(w)=\overset{\infty}{\underset{k=0}{\sum}}B_{-k}w^{k}, \ F^+(z)=\overset{\infty}{\underset{k=1}{\sum}}B_kz^{k-1} \ \text{and} \ F^-(w)=\overset{\infty}{\underset{k=1}{\sum}}B_{-k}w^{k-1}. 
			\end{align*}
		\end{small}
		Each of the series above converges absolutely on $r\overline{\D}$ and the coefficients $B_k$ doubly commute with $J_1$. Since $\|J_1\|, \|J_1^{-1}\| \leq r$, a re-scaling argument together with Proposition 2.3 in \cite{Hartz} gives
		\begin{align}\label{eqn_4004}
			& \| B_0+J_1F^+(J_1)\| \leq \| B_0+zF^+(z)\|_{\infty, r\T}, \quad \|B_0+J_1^{-1}F^-(J_1^{-1})\| \leq \|B_0+wF^-(w)\|_{\infty, r\T}, \notag \\
			& \|F^+(J_1)\|\leq  \|F^+(z)\|_{\infty, r\T} \quad \text{and} \quad \|F^-(J_1^{-1})\| \leq \|F^-(w)\|_{\infty, r\T}.
		\end{align}
In comparison with the proof of Theorem \ref{thm_206}, we note that the coefficients $B_k$ therein are matrices, whereas in the present setting each $B_k$ is as an operator on $\KS$. Consequently, by following arguments analogous to those used in the proof of Theorem \ref{thm_206}, we obtain the following estimates:
			\begin{align}\label{eqn_4005}
			\|B_0+zF^+(z)\|_{\infty, r\T}, \ \|B_0+wF^-(w)\|_{\infty, r\T} & \ \leq \|F\|_{\infty, \CA_r}\left(1+\frac{1}{\sqrt{r^4-1}}\right) \quad \text{and} \notag \\
			\|zF^+(z)\|_{\infty, r\T}, \ \|wF^-(w)\|_{\infty, r\T} & \ \leq \|F\|_{\infty, \CA_r}\left(1+\frac{r^2}{\sqrt{r^4-1}}\right).
		\end{align}

		\medskip 
		
		\noindent \textit{Step 3.} We are now in a position to prove the final estimate. To do so, note that
		\begin{align*}
			& \ \quad \|F(J_1)\|\\
			&=\|U_1F(J_1)U_1^*\|\\
			&=a_r\left\|\left[U_1\left(B_0+J_1F^+(J_1)\right)J_1^*+J_1^{-*}\left(B_0+J_1^{-1}F^-(J_1^{-1})\right)U_1^*+U_1F^+(J_1)+F^-(J_1^{-1})U_1^*\right]\right\| \tag*{\text{[by \eqref{eqn_4002}]}}\\
			& \leq a_r \left[ r\| B_0+J_1F^+(J_1)\|+r\|B_0+J_1^{-1}F^-(J_1^{-1})\|+\|F^+(J_1)\|+\|F^-(J_1^{-1})\| \right] \smallskip \tag*{[\text{since $\|J_1\| \leq r$}]}\\
			& \leq a_r \left[ r\| B_0+zF^+(z)\|_{\infty, r\T}+r\|B_0+wF^-(w)\|_{\infty, r\T}+\|F^+(z)\|_{\infty, r\T}+\|F^-(w)\|_{\infty, r\T} \right] \tag*{[\text{by \eqref{eqn_4004}}]}\\
		&\leq
		\frac{r}{1+r^2}
		\bigg[
		2r\left(1+\frac{1}{\sqrt{r^4-1}}\right)
		+
		\frac{2}{r}\left(1+\frac{r^2}{\sqrt{r^4-1}}\right)
		\bigg]
		\|F\|_{\infty,\CA_r}\\
		&= 2\left(1+\frac{2r^2}{(r^2+1)\sqrt{r^4-1}}\right)
		\|F\|_{\infty,\CA_r}.
		\end{align*}
		Fix $z_1 \in \CA_r$ and define a holomorphic map $g_{z_1}: \CA_r \to \C$ as $g_{z_1}(z_2)=g(z_1, z_2)$. Since $g_{z_1}$ is a complex-valued holomorphic map on $\CA_r$ and $J_2 \in Q\A_r$, we have by Theorem \ref{thm_206} that
		\begin{align*}
			\|F(z_1)\|=\|g(z_1, J_2)\|
			=\|g_{z_1}(J_2)\|
			& \leq 2\left(1+\frac{2r^2}{(r^2+1)\sqrt{r^4-1}}\right)
			\|g_{z_1}\|_{\infty,\CA_r} \\
			& \leq 2\left(1+\frac{2r^2}{(r^2+1)\sqrt{r^4-1}}\right)
			\|g\|_{\infty,\CA_r^2}
		\end{align*}
		and so, $\|F\|_{\infty, \CA_r} \leq 2\left(1+\frac{2r^2}{(r^2+1)\sqrt{r^4-1}}\right)
		\|g\|_{\infty,\CA_r^2}$. Finally, we have that
		\[
		\|g(J_1, J_2)\| =\|F(J_1)\| \leq 2\left(1+\frac{2r^2}{(r^2+1)\sqrt{r^4-1}}\right)
		\|F\|_{\infty,\CA_r} \leq \left[2\left(1+\frac{2r^2}{(r^2+1)\sqrt{r^4-1}}\right)\right]^2
		\|g\|_{\infty,\CA_r^2}.
		\]
		The proof is now complete.
	\end{proof}
	
If we denote the smallest spectral constant by $K_{dc}(\overline{\mathbb{A}}_r^d)$ for doubly commuting $d$-tuples in $Q\mathbb{A}_r$, then the resulting bounds from \cite{Pal_Pascoe} are given by
\[
2^d \leq K_{dc}(\overline{\mathbb{A}}_r^d) \leq \left(\frac{3r^2-1}{r^2-1}\right)^d
\]
and so, $2^d \leq \underset{r \to \infty}{\lim} K_{dc}(\overline{\mathbb{A}}_r^d) \leq 3^d$. An application of Theorem \ref{thm_401} sharpens these bounds as follows. 

	\begin{cor}\label{cor_402}
		Let $K_{dc}(\CA_r^d)$ be the smallest constant for which the polyannulus $\CA_r^d$ is a $K_{dc}(\CA_r^d)$-spectral set for every doubly commuting operator $d$-tuple in $Q\A_r$. Then 
		\[
		2^d \leq K_{dc}(\CA_r^d) \leq \left[2\left(1+\frac{2r^2}{(r^2+1)\sqrt{r^4-1}}\right)\right]^d \quad \text{and} \quad \displaystyle \lim_{r \to \infty} K_{dc}(\CA_r^d)= 2^d.
		\] 
	\end{cor}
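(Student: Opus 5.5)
The upper bound $K_{dc}(\CA_r^d)\le \big[2\big(1+\tfrac{2r^2}{(r^2+1)\sqrt{r^4-1}}\big)\big]^d$ comes straight from Theorem \ref{thm_401}. Theorem \ref{thm_401} gives $\|g(T_1,\dotsc,T_d)\|\le K_t^d\|g\|_{\infty,\CA_r^d}$ for every doubly commuting $d$-tuple in $Q\A_r$ and every $g\in\text{Rat}(\CA_r^d)$. We also need $\sigma_T(\underline{T})\subseteq \CA_r^d$. This follows from the projection property of the Taylor spectrum, since $\sigma(T_i)\subseteq\CA_r$ whenever $\|T_i\|,\|T_i^{-1}\|\le r$. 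Hence $\CA_r^d$ is a $K_t^d$-spectral set for every such tuple, and the optimal constant is at most $K_t^d$.

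The lower bound $2^d\le K_{dc}(\CA_r^d)$ has already been established as part of Theorem 4.3 in \cite{Pal_Pascoe}, and I would simply cite it. If a self-contained argument is wanted, the plan is a tensor-product construction. By Tsikalas \cite{TsikalasII}, for each $\varepsilon>0$ there is $T\in Q\A_r$ on some $\HS$ and $f\in\text{Rat}(\CA_r)$ with $\|f\|_{\infty,\CA_r}=1$ and $\|f(T)\|\ge 2-\varepsilon$. Set
\[
T_i=I\otimes\dotsb\otimes T\otimes\dotsb\otimes I \quad (T \text{ in the } i\text{-th slot})
\]
on $\HS^{\otimes d}$, and $g(z_1,\dotsc,z_d)=f(z_1)\cdots f(z_d)$. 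The tuple $(T_1,\dotsc,T_d)$ is doubly commuting, and each $T_i\in Q\A_r$ because tensoring with the identity preserves the norms of $T$ and $T^{-1}$. Moreover $g(T_1,\dotsc,T_d)=f(T)^{\otimes d}$, whose norm is $\|f(T)\|^d\ge(2-\varepsilon)^d$, while $\|g\|_{\infty,\CA_r^d}=1$. Letting $\varepsilon\to0$ gives $2^d\le K_{dc}(\CA_r^d)$.

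For the limit, note that
\[
\frac{2r^2}{(r^2+1)\sqrt{r^4-1}}\sim \frac{2}{r^2}\to0 \quad (r\to\infty),
\]
so $K_t\to 2$ and therefore $K_t^d\to 2^d$. The squeeze $2^d\le K_{dc}(\CA_r^d)\le K_t^d$ then yields $\lim_{r\to\infty}K_{dc}(\CA_r^d)=2^d$.

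No step here is a genuine obstacle. The only point needing care is the lower bound, namely checking that the tensor extremal construction stays in the doubly commuting class with each $T_i\in Q\A_r$; this is immediate. The substantive work was already done in Theorem \ref{thm_401}.
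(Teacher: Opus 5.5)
Your proposal is correct and follows the paper's proof: the lower bound $2^d\le K_{dc}(\CA_r^d)$ is cited from \cite{Pal_Pascoe}, the upper bound comes from Theorem \ref{thm_401}, and the limit follows by squeezing, since $K_t\to 2$. Two of your additions go beyond the paper and are both valid. The tensor-product construction $T_i=I\otimes\dotsb\otimes T\otimes\dotsb\otimes I$ with $g=f(z_1)\cdots f(z_d)$ makes the lower bound self-contained from Tsikalas's one-variable bound $K(\CA_r)\ge 2$, and your projection-property argument for $\sigma_T(\underline{T})\subseteq\CA_r^d$ fills a point the paper leaves implicit.
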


\begin{proof}
By Theorem 5.1 in \cite{Pal_Pascoe}, $2^d \leq K_{dc}(\CA_r^d)$. The conclusion now follows from Theorem \ref{thm_401}.
\end{proof}
	We now show that the estimate obtained in Theorem \ref{thm_401} holds for every matrix-valued rational function on $\CA_r^d$, thereby establishing a complete $K$-spectral set analog of Theorem \ref{thm_401}.
	
	\begin{thm}\label{thm_403}
		Let $(T_1,\ldots,T_d)$ be a doubly commuting tuple of operators in $Q\A_r$ acting on a Hilbert space $\HS$. Then $\CA_r^d$ is a complete $K_{dc}^{(d)}$-spectral set for $(T_1,\ldots,T_d)$, where
		\[
		K_{dc}^{(d)}
		=
		\left[2\left(1+\frac{2r^2}{(r^2+1)\sqrt{r^4-1}}\right)
		\right]^d.
		\]
	\end{thm}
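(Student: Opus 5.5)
We follow the proof of Theorem \ref{thm_401}, replacing scalar rational functions by matrix-valued ones $G=[g_{ij}]_{i,j=1}^n\in M_n(\text{Rat}(\CA_r^d))$, and we argue by induction on $d$. The case $d=1$ is exactly Theorem \ref{thm_206}.

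First we reduce to the dilation. By Theorem \ref{thm_gen}, the tuple $\underline{T}$ extends to a doubly commuting tuple $\underline{J}=(J_1,\dots,J_d)$ of quantum annulus unitaries on $\KS\supseteq\HS$. In particular $G(\underline{T})=G(\underline{J})|_{\C^n\otimes\HS}$, so it suffices to show
\[
\|G(\underline{J})\|\le K_{dc}^{(d)}\|G\|_{\infty,\CA_r^d}.
\]
Write $\widetilde J_i=I_n\otimes J_i$. Each $\widetilde J_i$ is again a quantum annulus unitary, and the tuple $(\widetilde J_1,\dots,\widetilde J_d)$ is still doubly commuting. Moreover, each $\widetilde J_i$ doubly commutes with all matrices of the form $A\otimes I_{\KS}$.

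Next we separate off the first variable, as in Step 1 of Theorem \ref{thm_401}. Expand $G$ in an absolutely convergent Laurent series with matrix coefficients on a slightly larger polyannulus, and set
\[
F(z_1)=G(z_1,\widetilde J_2,\dots,\widetilde J_d)=\sum_{k\in\Z}B_kz_1^k,
\]
where the $B_k$ are operators on $\C^n\otimes\KS$ built from $\widetilde J_2,\dots,\widetilde J_d$ and scalar matrices. Since $\widetilde J_1$ doubly commutes with $\widetilde J_2,\dots,\widetilde J_d$ and with every $A\otimes I$, it doubly commutes with each $B_k$. We split
\[
F(z_1)=B_0+z_1F^+(z_1)+z_1^{-1}F^-(z_1^{-1}).
\]
With $U_1=a_r(\widetilde J_1+\widetilde J_1^{-*})$, which is unitary by Theorem \ref{thm_204}, we get the identity \eqref{eqn_4002} verbatim. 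The von Neumann-type bound \eqref{eqn_4004} follows from Proposition 2.3 of \cite{Hartz}, whose only hypothesis is that the coefficients doubly commute with the rescaled contraction $r^{-1}\widetilde J_1$ (respectively $r^{-1}\widetilde J_1^{-1}$).

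The coefficient estimates \eqref{eqn_4005} are the step needing the most care, because the coefficients $B_k$ are now operators rather than matrices. We run the Parseval argument of Theorem \ref{thm_206} with unit vectors $x,y\in\C^n\otimes\KS$ and the scalar function $\zeta\mapsto\langle F(\zeta)x,y\rangle$. This yields
\[
\sum_k|\langle B_kx,y\rangle|^2r^{\mp 2k}\le\|F\|_{\infty,\CA_r}^2 .
\]
The Cauchy--Schwarz steps then go through unchanged, giving the same constants $1+(r^4-1)^{-1/2}$ and $1+r^2(r^4-1)^{-1/2}$. Combining these as in Step 3 of Theorem \ref{thm_401} gives
\[
\|F(\widetilde J_1)\|\le K_t\|F\|_{\infty,\CA_r}.
\]

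Finally, we bound $\|F\|_{\infty,\CA_r}$. For fixed $z_1\in\CA_r$, the function $G_{z_1}(z_2,\dots,z_d)=G(z_1,z_2,\dots,z_d)$ lies in $M_n(\text{Rat}(\CA_r^{d-1}))$, and $(J_2,\dots,J_d)$ is a doubly commuting $(d-1)$-tuple in $Q\A_r$. We have $F(z_1)=G_{z_1}(J_2,\dots,J_d)$ after the identification $\C^n\otimes\KS$. The induction hypothesis then gives
\[
\|F(z_1)\|\le (K_t)^{d-1}\|G\|_{\infty,\CA_r^d},
\]
and therefore
\[
\|G(\underline{T})\|\le\|G(\underline{J})\|=\|F(\widetilde J_1)\|\le (K_t)^d\|G\|_{\infty,\CA_r^d}.
\]
For the inductive application one should check that the identification of $G_{z_1}(J_2,\dots,J_d)$ with the matricial functional calculus is consistent, since the hypothesis is used on tuples in $Q\A_r$ acting on $\KS$ rather than on $\HS$. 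This is fine because the statement is proved for arbitrary Hilbert spaces.
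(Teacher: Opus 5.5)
Your proposal is correct and follows essentially the same route as the paper's proof. Both argue by induction on $d$ with Theorem \ref{thm_206} as the base case, pass to the dilation of Theorem \ref{thm_gen}, and slice off the first variable to obtain the $\mathcal{B}(\C^n\otimes\KS)$-valued function $F$. Both then use the unitary identity \eqref{eqn_4002} with $\widetilde U_1$, the doubly commuting von Neumann inequality (Proposition 2.3 of Hartz), and the Parseval estimates with unit vectors in $\C^n\otimes\KS$, before applying the induction hypothesis to $(J_2,\dots,J_d)$ on $\KS$.

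The only point worth stating explicitly is that each $J_i$ lies in $Q\A_r$, which you use implicitly when invoking the induction hypothesis. This follows, for instance, from Theorem \ref{thm_204}, since $\Phi_r(J_i)$ is unitary.
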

	
\begin{proof}
	We proceed by induction on $d$. The case $d=1$ follows from Theorem \ref{thm_206}. Let $d\ge 2$ and assume that $\CA_r^{d-1}$ is a complete $K_{dc}^{(d-1)}$-spectral set for every doubly commuting $(d-1)$-tuple of operators in $Q\A_r$. Let $(T_1,\dotsc, T_d)$ be a doubly commuting tuple of operators in $Q\A_r$ and let
	$
	G=[g_{ij}] \in M_n\big(\text{Rat}(\CA_r^d)\big).
	$
	By Theorem \ref{thm_gen}, there exists a doubly commuting tuple
		$
		(J_1,\ldots,J_d)
		$
		of quantum annulus unitaries on a Hilbert space $\KS\supseteq\HS$ such that
		$
		g(T_1,\ldots,T_d)
		=
		g(J_1,\ldots,J_d)\big|_{\HS}
		$
		for every $g\in\text{Rat}(\CA_r^d)$. Consequently, $G(T_1,\ldots,T_d)
		=
		G(J_1,\ldots,J_d)\big|_{\C^n \otimes \HS}$ and so,
		$
		\|G(T_1,\ldots,T_d)\|
		\leq
		\|G(J_1,\ldots,J_d)\|.
		$
		A repeated application of one variable Laurent series representation for $M_n(\C)$-valued holomorphic functions on an annulus gives that	
		\[
		G(z_1,\dots,z_d)=\sum_{\nu_1, \dotsc, \nu_d \in\mathbb{Z}} A_{\nu_1, \dotsc, \nu_d} z_1^{\nu_1}\cdots z_d^{\nu_d} \ \text{and so,} \ G(J_1, \dotsc, J_d)=\sum_{\nu_1, \dotsc, \nu_d \in \Z}A_{\nu_1, \dotsc, \nu_d} \otimes (J_1^{\nu_1}\dotsc J_d^{\nu_d})
		\]
is an operator on $\C^n \otimes \KS$. The coefficients are explicitly given by
\[
 A_{\nu_1, \dotsc, \nu_d}=
		\frac{1}{(2\pi i)^d}
		\int_{|\zeta_1|=\rho_1}\cdots\int_{|\zeta_d|=\rho_d}
		\frac{G(\zeta_1,\dots,\zeta_d)}
		{\zeta_1^{\nu_1+1}\cdots \zeta_d^{\nu_d+1}}
		\,d\zeta_1\cdots d\zeta_d,
\]
where $ r^{-1}<\rho_j<r$ and $1 \leq j \leq d$. Furthermore, the Laurent series of $G$ converges uniformly and absolutely on $\CA_r^d$. Define $F: \CA_r \to \mathcal{B}(\C^n \otimes \KS)$ as $F(z_1)=G(z_1, J_2, \dotsc, J_d)$, which is a holomorphic map. The map $F$ can be re-written as
		\[
		F(z_1)=\sum_{\nu_1\in\mathbb{Z}} B_{\nu_1}(J_2, \dotsc, J_d)z_1^{\nu_1}, \quad \text{where} \quad B_{\nu_1}(J_2, \dotsc, J_d)=\sum_{\nu_2, \dotsc, \nu_d \in \Z}A_{\nu_1, \nu_2, \dotsc, \nu_d} \otimes (J_2^{\nu_2}\dotsc J_d^{\nu_d}).
		\]
The decomposition of $F$ into analytic and principal parts is given by
	\[
	F(z_1)=B_0(J_2, \dotsc, J_d)+z_1F^+(z_1)+z_1^{-1}F^-(1\slash z_1)=B_0(J_2, \dotsc, J_d)+F_1(z_1)+F_2(1\slash z_1)
	\]	
where  $F_1(z_1)=z_1F^+(z_1)$ and $F_2(z_1)=z_1F^-(z_1)$ with
\[
F^+(z_1)=\overset{\infty}{\underset{\nu_1=1}{\sum}} B_{\nu_1}(J_2, \dotsc, J_d)z_1^{\nu_1-1} \quad \text{and} \quad F^-(z_1)=\overset{-1}{\underset{\nu_1=-\infty}{\sum}} B_{-\nu_1}(J_2, \dotsc, J_d)z_1^{\nu_1-1}.
\] 
Let $\widetilde{J}_1=I_n \otimes J_1$. Then
\begin{align*}
F_1(J_1)
=\widetilde J_1F^+(J_1)
&=(I_n \otimes J_1)\overset{\infty}{\underset{\nu_1=1}{\sum}} B_{\nu_1}(J_2, \dotsc, J_d)(I_n \otimes J_1^{\nu_1-1})\\
&=\overset{\infty}{\underset{\nu_1=1}{\sum}} \sum_{\nu_2, \dotsc, \nu_d \in \Z}A_{\nu_1, \nu_2, \dotsc, \nu_d} \otimes (J_1^{\nu_1} J_2^{\nu_2}\dotsc J_d^{\nu_d})
\end{align*}
and
\begin{align*} 
F_2(J_1^{-1})
=\widetilde J_1^{-1}F^-(J_1^{-1})
&=(I_n \otimes J_1^{-1})\overset{-1}{\underset{\nu_1=-\infty}{\sum}} B_{-\nu_1}(J_2, \dotsc, J_d)(I_n \otimes J_1^{\nu_1-1})^{-1}\\
&=\overset{-1}{\underset{\nu_1=-\infty}{\sum}} \sum_{\nu_2, \dotsc, \nu_d \in \Z}A_{-\nu_1, \nu_2, \dotsc, \nu_d} \otimes \left(J_1^{-\nu_1} J_2^{\nu_2}\dotsc J_d^{\nu_d}\right).
\end{align*}
Let $z_1 \in \CA_r$. Let $a_r=r(1+r^2)^{-1}$ and $U_1=a_r(J_1+J_1^{-*})$. Since $J_1, \dotsc, J_d$ are doubly commuting operators, it follows that $B_{\nu_1}(J_2, \dotsc, J_d)$ doubly commutes with $\widetilde J_1=I_n\otimes J_1$ and $\widetilde U_1=I_n \otimes U_1$. Following the proof of \eqref{eqn_4002} in Theorem \ref{thm_401}, one can show that
\begin{align*}
	& \quad \widetilde U_1F(J_1)\widetilde U_1^*\\
	&=a_r\left[\widetilde U_1\left(B_0+\widetilde J_1F^+(J_1)\right)(\widetilde J_1)^*+(\widetilde J_1)^{-*}\left(B_0+(\widetilde J_1)^{-1}F^-(J_1^{-1})\right)\widetilde U_1^*+\widetilde U_1F^+(J_1)+F^-(J_1^{-1})\widetilde U_1^*\right]
\end{align*}
and thus, 
\begin{align*}
 \|F(J_1)\| 	
	& \leq a_r\left(r\|B_0+\widetilde J_1F^+(J_1)\|+r\|B_0+(\widetilde J_1)^{-1}F^-(J_1^{-1})\|+\|F^+(J_1)\|+\|F^-(J_1^{-1})\|\right)\\
	& = a_r\left(r\|B_0+F_1(J_1)\|+r\|B_0+F_2(J_1^{-1})\|+\|F^+(J_1)\|+\|F^-(J_1^{-1})\|\right).
\end{align*}
We now wish to apply von Neumann's type inequality to each of the norm entity appearing in the last equality. For the first entity, we consider the map $q: r\overline{\D} \to \mathcal{B}(\C^n \otimes \KS)$ defined as 
$
q(z_1)= B_0+F_1(z_1)=B_0+z_1F^+(z_1)=\overset{\infty}{\underset{\nu_1=0}{\sum}} B_{\nu_1}(J_2, \dotsc, J_d)z_1^{\nu_1}.
$
Then $q$ is a holomorphic function on $r\overline{\D}$. Also, the doubly commutativity of $J_1, \dotsc, J_d$ implies that $I_n \otimes J_1$ doubly commutes with each $B_{\nu_1}(J_2, \dotsc, J_d)$. Using the same application of Proposition 2.3 from \cite{Hartz} as employed in the proof of Theorem \ref{thm_401}, we have that
\[
\|B_0+F_1(J_1)\|=\|q(I_n \otimes J_1)\| \leq \|q\|_{\infty, r\overline{\D}}=\|q\|_{\infty, r\T}=\|B_0+z_1F^+(z_1)\|_{\infty, r\T}.
\]
For a pair of unit vectors $x, y \in \C^n \otimes \KS$, define 
\[
\varphi_{x, y}: \CA_r \to \C \quad \text{as} \quad \varphi_{x, y}(\zeta)=\left\langle F(\zeta)x, y \right \rangle	=
\sum_{\nu_1=-\infty}^{\infty}
\langle B_{v_1}(J_2, \dotsc, J_d)x,y\rangle \zeta^{\nu_1}.
\] 
By Parseval's identity, we have that
\begin{align}\label{eqn_Parseval1}
	\sum_{\nu_1=-\infty}^{\infty}
	|\langle B_{\nu_1}(J_2, \dotsc, J_d)x,y\rangle|^2 r^{-2\nu_1}
	=
	\frac1{2\pi}
	\int_0^{2\pi}
	\left|
	\varphi_{x,y}\left(\frac{e^{i\theta}}{r}\right)
	\right|^2\,d\theta 
	\leq
	\|F\|_{\infty,\CA_r}^2. 
\end{align}
For every $z_1 \in r\T$, it follows that
\begin{align*}
		\left|\left\langle (B_0+z_1F^+(z_1))x,y\right\rangle\right|
		&=
		\left|
		\left\langle
		F(z_1)x,y
		\right\rangle
		-
		\sum_{\nu_1=1}^{\infty}
		\left\langle B_{-\nu_1}(J_2, \dotsc, J_d)x,y\right\rangle z^{-\nu_1}
		\right|  \\
		& \leq
		|\langle F(z)x,y\rangle|
		+
		\sum_{\nu_1=1}^{\infty}
		\frac{|\langle B_{-\nu_1}(J_2, \dotsc, J_d)x,y\rangle|}{r^\nu_1}\\
		& \leq \|F\|_{\infty, \CA_r}+	\left(
		\sum_{\nu_1=1}^{\infty}
		|\langle B_{-\nu_1}(J_2, \dotsc, J_d)x,y\rangle|^2 r^{2\nu_1}
		\right)^{1/2}
		\left(
		\sum_{\nu_1=1}^{\infty}
		\frac1{r^{4\nu_1}}
		\right)^{1/2} \\
		& \leq \|F\|_{\infty, \CA_r}\left[1+\left(\sum_{\nu_1=1}^{\infty}
		\frac1{r^{4\nu_1}}
		\right)^{1/2} \right] \quad \text{[by \eqref{eqn_Parseval1}]}\\
		&=\|F\|_{\infty,\CA_r}
		\left(
		1+\frac{1}{\sqrt{r^4-1}}
		\right)
	\end{align*} 
and so, 
\begin{align*}
\|B_0+F_1(J_1)\| \leq \|B_0+z_1F^+(z_1)\|_{\infty, r\T}
&=\underset{z_1 \in r\T}{\sup}\left[\underset{\|x\|=\|y\|=1}{\sup}\left|\left\langle (B_0+z_1F^+(z_1))x,y\right\rangle\right|\right] \\
& \leq \|F\|_{\infty,\CA_r}
\left(
1+\frac{1}{\sqrt{r^4-1}}
\right) .
\end{align*}
Similarly, one can show that 
{\small 
	\[	
\|B_0+F_2(J_1^{-1})\| \leq \|F\|_{\infty, \CA_r}\left(1+\frac{1}{\sqrt{r^4-1}}\right)
 \ \text{and} \ 
\|F^+(J_1)\|, \ \|F^-(J_1^{-1})\|_{\infty, r\T}   \leq \frac{\|F\|_{\infty, \CA_r}}{r}\left(1+\frac{r^2}{\sqrt{r^4-1}}\right).
\]
}
Combining everything together, we have
\begin{align*} 
		\|G(T_1,\dots, T_d)\|
		& \leq \|G(J_1,\dots,J_d)\|\\
		&=\|F(J_1)\|\\
		& \leq  a_r\left(r\|B_0+F_1(J_1)\|+r\|B_0+F_2(J_1^{-1})\|+\|F^+(J_1)\|+\|F^-(J_1^{-1})\|\right)\\
	&\leq
	\frac{r}{1+r^2}
	\bigg[
	2r\left(1+\frac{1}{\sqrt{r^4-1}}\right)
	+
	\frac{2}{r}\left(1+\frac{r^2}{\sqrt{r^4-1}}\right)
	\bigg]
	\|F\|_{\infty,\CA_r}\\
	&= 2\left(1+\frac{2r^2}{(r^2+1)\sqrt{r^4-1}}\right)
	\|F\|_{\infty,\CA_r}\\
	&=K_{dc}^{(1)}\|F\|_{\infty, \CA_r},	
		\end{align*}
where $K_{dc}^{(d)}$ is as in the statement of the theorem.	Fix $z_1 \in \CA_r$ and define $G_{z_1}: \CA_r^{d-1} \to M_n(\C)$ as $G_{z_1}(z_2, \dotsc, z_d)= G(z_1, z_2, \dotsc, z_{d})$. By induction hypothesis, it follows that
		\begin{align*}
			\|F(z_1)\|=\|G(z_1, J_2, \dotsc, J_d)\|
			&=\|G_{z_1}(J_2, \dotsc, J_d)\| \leq K_{dc}^{(d-1)}\|G_{z_1}\|_{\infty, \CA_r^{d-1}} \leq K_{dc}^{(d-1)}\|G\|_{\infty, \CA_r^{d}}
		\end{align*}
		and so, $
		\|G(T_1,\dots,T_d)\| \leq \|F(J_1)\|
		\leq	K_{dc}^{(1)}\|F\|_{\infty, \CA_r} \leq K_{dc}^{(1)}K_{dc}^{(d-1)}
		\|G\|_{\infty,\CA_r^d}=K_{dc}^{(d)}\|G\|_{\infty, \CA_r^d}$.
	\end{proof} 
	
	As an application of the above theorem, we have the following result.
	
	\begin{cor}\label{thm:polyannulusII}
		Let $K_{dc}(\CA_r^d)$ and $K_{dc}^{\mathrm{full}}(\CA_r^d)$ be the smallest constants for which the polyannulus $\CA_r^d$ is a $K_{dc}(\CA_r^d)$-spectral set and a complete $K_{dc}^{\mathrm{full}}(\CA_r^d)$-spectral set, respectively, for every doubly commuting $d$-tuple of operators in $Q\A_r$. Then 
		\[
		2^d \leq K_{dc}(\CA_r^d) \leq K_{dc}^{\text{full}}(\CA_r^d) \leq \left[2\left(1+\frac{2r^2}{(r^2+1)\sqrt{r^4-1}}\right)\right]^d
		\ \ \text{and} \ \
		\underset{r \to \infty}{\lim} \ K_{dc}(\CA_r^d)=\underset{r \to \infty}{\lim}  K_{dc}^{\text{full}}(\CA_r^d)=2^d.
		\]
	\end{cor}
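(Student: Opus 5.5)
The corollary follows by chaining three inequalities and then taking limits; all the ingredients are already in place.

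For the lower bound $2^d \leq K_{dc}(\CA_r^d)$, I will invoke Theorem 5.1 in \cite{Pal_Pascoe}, exactly as in the proof of Corollary \ref{cor_402}. The middle inequality $K_{dc}(\CA_r^d)\leq K_{dc}^{\mathrm{full}}(\CA_r^d)$ holds by definition. Indeed, if $\CA_r^d$ is a complete $K$-spectral set for a tuple, taking $n=1$ in the matricial inequality shows that it is a $K$-spectral set. So every admissible complete spectral constant is an admissible spectral constant, and the infimum over the larger collection is no bigger.

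For the upper bound $K_{dc}^{\mathrm{full}}(\CA_r^d)\leq K_{dc}^{(d)}$, Theorem \ref{thm_403} states that $\CA_r^d$ is a complete $K_{dc}^{(d)}$-spectral set for every doubly commuting $d$-tuple in $Q\A_r$. Hence $K_{dc}^{(d)}$ is admissible, and the smallest such constant is at most $K_{dc}^{(d)}$.

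For the limits, observe that
\[
\frac{2r^2}{(r^2+1)\sqrt{r^4-1}}\sim \frac{2}{r^2}\to 0 \quad \text{as } r\to\infty,
\]
so $K_{dc}^{(d)}\to 2^d$. Applying the squeeze theorem to $2^d\le K_{dc}(\CA_r^d)\le K_{dc}^{\mathrm{full}}(\CA_r^d)\le K_{dc}^{(d)}$ gives both limits equal to $2^d$. I expect no real obstacle here. The only point to check is that the lower bound from \cite{Pal_Pascoe} is stated for the (non-complete) spectral constant of doubly commuting tuples, which is exactly what the chain requires.
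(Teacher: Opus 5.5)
Your proposal is correct and follows the paper's proof. The lower bound $2^d$ comes from Theorem 5.1 of Pal--Pascoe, as in Corollary \ref{cor_402}; the comparison $K_{dc}(\CA_r^d)\le K_{dc}^{\mathrm{full}}(\CA_r^d)$ holds by definition; and the upper bound comes from Theorem \ref{thm_403}. For the limits, your squeeze argument already gives both existence and the value $2^d$, so you do not need the paper's extra remark that $r\mapsto K_{dc}^{\mathrm{full}}(\CA_r^d)$ is non-increasing.
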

	
	\begin{proof}
		By definition of spectral sets and complete spectral sets, it follows that $K_{dc}(\CA_r^d) \leq K_{dc}^{\text{full}}(\CA_r^d)$. We have by Corollary \ref{cor_402} and Theorem \ref{thm_403} that
		\[
		2^d \leq K_{dc}(\CA_r^d) \leq K_{dc}^{\text{full}}(\CA_r^d) \leq \left[2\left(1+\frac{2r^2}{(r^2+1)\sqrt{r^4-1}}\right)\right]^d.
		\]
Since $\CA_r\subseteq \CA_s$ whenever $1<r\leq s$, the function
$\mathfrak{K}^{\mathrm{full}}:(1,\infty)\to\mathbb R$ given by
$\mathfrak{K}^{\mathrm{full}}(r)=K_{dc}^{\mathrm{full}}(\CA_r^d)
$
is non-increasing and so,
$
\underset{r\to\infty}{\lim}K_{dc}^{\mathrm{full}}(\CA_r^d)
$
exists. Consequently, $\underset{r\to\infty}{\lim}K_{dc}(\CA_r^d)=\underset{r\to\infty}{\lim}K_{dc}^{\mathrm{full}}(\CA_r^d)=
2^d$.
	\end{proof}
	
	We conclude this section with the following similarity theorem for doubly commuting operators in $Q\A_r$. The key ingredient in the proof is Paulsen's similarity theorem (see Theorem 9.1 in \cite{Paulsen}). 
	
	\begin{thm}\label{thm_sim}
		Let $\underline{T}=(T_1, \dotsc, T_d)$ be a doubly commuting tuple of operators in $Q\A_r$ acting on a Hilbert space $\HS$. Then there exists an invertible operator $S$ on $\HS$ satisfying
		\[
		\|S\|\cdot \|S^{-1}\| \leq \left[2\left(1+\frac{2r^2}{(r^2+1)\sqrt{r^4-1}}\right)\right]^d
		\] 
		such that the tuple 
		$
		\underline{R}=(S^{-1}T_1S, \dotsc, S^{-1}T_dS)
		$ 
		has $\CA_r^d$ as a complete spectral set.
	\end{thm}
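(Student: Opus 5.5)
The plan is to combine Theorem \ref{thm_403} with Paulsen's similarity theorem (Theorem 9.1 in \cite{Paulsen}), in its multivariable form for unital completely bounded homomorphisms of the operator algebra $\text{Rat}(\CA_r^d)$. This parallels the one-variable corollary in Section \ref{sec_02}, where Corollary 9.12 in \cite{Paulsen} was used.

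\textbf{Step 1: the functional calculus homomorphism.} Let $\underline{T}=(T_1,\dotsc,T_d)$ be doubly commuting with each $T_i\in Q\A_r$. Since the Taylor spectrum of $\underline{T}$ lies in $\CA_r^d$, we define
\[
\rho:\text{Rat}(\CA_r^d)\to\mathcal B(\HS),\qquad \rho(g)=g(T_1,\dotsc,T_d).
\]
This map is a unital algebra homomorphism. We regard $\text{Rat}(\CA_r^d)$, with the supremum norm on $\CA_r^d$, as a unital subalgebra of the C$^*$-algebra $C(\CA_r^d)$, so it is a unital operator algebra with its natural matrix norms $\|[g_{ij}]\|_{\infty,\CA_r^d}$. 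By Theorem \ref{thm_403},
\[
\|[\rho(g_{ij})]\|\le K_{dc}^{(d)}\,\|[g_{ij}]\|_{\infty,\CA_r^d}
\]
for all $n$. Hence $\rho$ is completely bounded with $\|\rho\|_{cb}\le K_{dc}^{(d)}$.

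\textbf{Step 2: Paulsen's theorem.} Paulsen's theorem applied to $\rho$ gives an invertible $S$ on $\HS$ with $\|S\|\,\|S^{-1}\|=\|\rho\|_{cb}\le K_{dc}^{(d)}$. It also guarantees that $\pi(\cdot)=S^{-1}\rho(\cdot)S$ is a unital completely contractive homomorphism.

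\textbf{Step 3: identifying the similar tuple.} Evaluating $\pi$ on the coordinate functions gives $\pi(z_i)=S^{-1}T_iS=R_i$, and multiplicativity gives $\pi(z_i^{-1})=R_i^{-1}$. Hence $\pi(g)=g(\underline{R})$ for every rational $g$, since both sides are homomorphisms agreeing on generators and inverses of their denominators. Complete contractivity then says exactly that $\|[g_{ij}(\underline{R})]\|\le\|[g_{ij}]\|_{\infty,\CA_r^d}$. The Taylor spectrum is similarity invariant, so $\sigma_T(\underline{R})=\sigma_T(\underline{T})\subseteq\CA_r^d$. Therefore $\CA_r^d$ is a complete spectral set for $\underline{R}$.

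\textbf{Main obstacle.} The only delicate point is justifying that Paulsen's theorem applies to $\text{Rat}(\CA_r^d)$ and yields $S$ on $\HS$ itself, with constant exactly $\|\rho\|_{cb}$. I would handle this by citing the general form, which covers any unital operator algebra $\mathcal A$ and any unital completely bounded homomorphism into $\mathcal B(\HS)$, rather than the single-variable Corollary 9.12.

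If needed, the direct route is as follows. Extend $\rho$ via Wittstock/Arveson to a completely bounded map on $C(\CA_r^d)$. Write it as $V^*\sigma(\cdot)W$ with $\|V\|\|W\|=\|\rho\|_{cb}$. Then take $S^{-1}$ built from the invariant-subspace construction $x\mapsto\|P\sigma(\mathcal A)Wx\|$ in Paulsen's proof.
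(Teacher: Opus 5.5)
Your proposal is correct and follows the paper's proof. The paper also uses Theorem \ref{thm_403} to make $\rho(g)=g(\underline{T})$ a unital completely bounded homomorphism on $\text{Rat}(\CA_r^d)$ with $\|\rho\|_{cb}\le K_{dc}^{(d)}$, and then applies Paulsen's Theorem 9.1 to get $S$ with $S^{-1}\rho(\cdot)S$ completely contractive. The one difference is the spectral containment: you use similarity invariance of the Taylor spectrum, which is slightly cleaner than the paper's argument via the bounds on $\|R_j\|$ and $\|R_j^{-1}\|$.
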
	
	
	\begin{proof}
		Let $\mathfrak{A}=\text{Rat}(\CA_r^d)$, which is a unital subalgebra of the $C^*$-algebra $C(\CA_r^d)$ equipped with the supremum norm $\|.\|_{\infty, \CA_r^d}$. In other words, $\mathfrak{A}$ is an operator algebra. Consider the map $\rho: \mathfrak{A} \to \mathcal{B}(\HS)$ given by $\rho(f)=f(T_1, \dotsc, T_d)$. For $n \in \mathbb{N}$, the $n$-th amplification map of $\rho$ is the map
		\[
		\rho^{(n)}: M_n(\mathfrak{A}) \to M_n(\mathcal{B}(\HS))\quad \text{defined as} \quad \rho^{(n)}([f_{ij}]_{i, j=1}^n)=[\rho(f_{ij})]_{i, j=1}^n.
		\] 
		We say that $\rho$ is completely bounded if $\|\rho\|_{cb}=\sup \{\|\rho^{(n)}\|: n \in \mathbb{N} \}< \infty$. Clearly, $\rho$ is a unital homomorphism. We show that $\rho$ is completely bounded. Let $n \in \mathbb{N}$ and $[f_{ij}]_{i, j=1}^n \in M_n(\mathfrak{A})$. Then
		\[
		\|[\rho(f_{ij})]_{i, j=1}^n\| =\|[f_{ij}(T_1, \dotsc, T_d)]_{i, j=1}^n\| \leq \left[2\left(1+\frac{2r^2}{(r^2+1)\sqrt{r^4-1}}\right)\right]^d \|[f_{ij}]_{i, j=1}^n\|_{\infty, \CA_r^d},
		\] 
		where the last inequality follows from Theorem \ref{thm_403}. Therefore, $\rho$ is a unital, completely bounded homomorphism with 
		\[
		\|\rho\|_{cb}\leq \left[2\left(1+\frac{2r^2}{(r^2+1)\sqrt{r^4-1}}\right)\right]^d.
		\]
		It follows from Theorem 9.1 in \cite{Paulsen}
		that there exists an invertible operator $S$ with $\|S\| \cdot \|S^{-1}\|\leq \|\rho\|_{cb}$ such that the map $\zeta: \mathfrak{A} \to \mathcal{B}(\HS)$ given by $f\mapsto S^{-1}\rho(f)S$ is completely contractive. Let $\pi_j(z_1, \dotsc, z_d)=z_j$ be the $j$-th coordinate map for $1 \leq j \leq d$. Define $\underline{R}=(R_1, \dotsc, R_d)=(\zeta(\pi_1), \dotsc, \zeta(\pi_d))$. Then
		\[
		(R_1, \dotsc, R_d)=(\zeta(\pi_1), \dotsc, \zeta(\pi_d))=(S^{-1}\rho(\pi_1)S, \dotsc, S^{-1}\rho(\pi_d)S)=(S^{-1}T_1S, \dotsc, S^{-1}T_dS).
		\]
		Let $[f_{ij}]_{i, j=1}^n \in M_n(\text{Rat}(\CA_r^d))$. Since $\zeta$ is a completely contractive map, it follows that
		\begin{align*}
			\|[f_{ij}(\underline{R})]_{i, j=1}^n\|
			=\|[f_{ij}(S^{-1}T_1S,  \dotsc, S^{-1}T_d S)]_{i, j=1}^n\|
			&=\|[S^{-1}f_{ij}(T_1, \dotsc, T_d)S]_{i, j=1}^n\|\\
			&=\|[S^{-1}\rho(f_{ij})S]_{i, j=1}^n\|\\
			&=	\|[\zeta(f_{ij})]_{i, j=1}^n\|\\
			& \leq \|[f_{ij}]_{i, j=1}^n\|_{M_n(\mathfrak{A})}\\
			&= \|[f_{ij}]_{i, j=1}^n\|_{\infty, \CA_r^d}.
		\end{align*} 
		By above inequality, it follows that $\|R_j\|, \|R_j^{-1}\|$ and $\sigma(R_j) \subseteq \CA_r$ for $1 \leq j \leq d$. Thus, the Taylor joint spectrum $\sigma_T(R_1, \dotsc, R_d)$ of $(R_1, \dotsc, R_d)$ is a subset of $\CA_r^d$ and so, $\CA_r^d$ is a complete spectral set for $\underline{R}$. The proof is now complete. 
	\end{proof}	
	
\section{Commuting tuples of operators in $Q\A_r$}\label{sec_05}	

\noindent We proved in Section \ref{sec_04} that $\CA_r^d$ is a complete $K_{dc}^{(d)}$-spectral set for doubly commuting $d$-tuples of operators in $Q\A_r$. In this section, we study spectral constant estimates for a subclass of commuting tuples of operators in $Q\A_r$. Needless to mention, one should not expect the doubly commuting arguments to extend to the commutative setting. To begin with, we present the following result.

\begin{prop}\label{prop_501}
	Let $g$ be a holomorphic function on the polyannulus $\CA_r^d$. Then we have a decomposition of $g$ into $2^d$ functions given by
	\begin{align}\label{eqn_g2n} 
		g(z_1, \dotsc, z_d)=\underset{\mu}{\sum} \ g_{\mu}\left(z_1^{\mu(1)}, \dotsc, z_n^{\mu(d)}\right) \qquad \text{on \ $\CA_r^d$},
	\end{align}
	where the sum varies over all functions $\mu: \{1, \dotsc, d\} \to \{1, -1\}$. Moreover, each $g_{\mu}$ is a holomorphic function on the polydisc $(r\overline{\mathbb{D}})^d$ and 
	\begin{align}\label{eqn_g2n_esti}
		\|g_{\mu}\|_{\infty, (r\mathbb{T})^d} \leq
		\left(1+\frac{1}{\sqrt{r^4-1}}\right)^{t}\left(1+\frac{r^2}{\sqrt{r^4-1}}\right)^{d-t}\|g\|_{\infty, \CA_r^d},
	\end{align}
	where $t \in \{0, \dotsc, d\}$ is the cardinality of the set $\mu^{-1}(\{1\})$.
\end{prop}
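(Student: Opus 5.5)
We will work from the Laurent expansion of $g$ on a neighbourhood $A_{s_1,s_2}^d$ of $\CA_r^d$, namely
\[
g(z)=\sum_{\nu\in\Z^d}a_\nu z_1^{\nu_1}\cdots z_d^{\nu_d},
\]
which converges absolutely and uniformly on $\CA_r^d$. We split the index set $\Z^d$ according to signs. For $\mu:\{1,\dots,d\}\to\{1,-1\}$, let $N_\mu$ be the set of $\nu$ with $\nu_j\ge 0$ when $\mu(j)=1$ and $\nu_j\le -1$ when $\mu(j)=-1$. These sets partition $\Z^d$. We then put
\[
g_\mu(w_1,\dots,w_d)=\sum_{\nu\in N_\mu}a_\nu\, w_1^{\mu(1)\nu_1}\cdots w_d^{\mu(d)\nu_d}.
\]
Every exponent $\mu(j)\nu_j$ is nonnegative, so $g_\mu$ is a power series. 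Since the Laurent series converges absolutely on the larger annulus, the power series converges on a polydisc of radius greater than $r$. Hence $g_\mu$ is holomorphic on $(r\overline{\D})^d$, and \eqref{eqn_g2n} follows by regrouping an absolutely convergent series.

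For the estimate \eqref{eqn_g2n_esti} we iterate the one-variable argument from the proof of Theorem \ref{thm_206}, one variable at a time, with operator-valued (here function-valued) coefficients. The cleanest way to organise this is as a composition of projections. For each coordinate $j$, define $P_j^{+}$ on Laurent series in $z_j$ to keep the terms $\nu_j\ge 0$, and $P_j^{-}$ to keep the terms $\nu_j\le-1$. Then $g_\mu(z^{\mu})=\prod_j P_j^{\mu(j)}g$, and these projections commute. We will prove the following one-variable claim for a function $h$ holomorphic near $\CA_r$ with values in a Banach space of bounded functions (sup norm in the remaining variables):
\[
\|P^+h\|_{\infty,\CA_r}\le\Big(1+\tfrac{1}{\sqrt{r^4-1}}\Big)\|h\|_{\infty,\CA_r},\qquad \|P^-h\|_{\infty,\CA_r}\le\Big(1+\tfrac{r^2}{\sqrt{r^4-1}}\Big)\|h\|_{\infty,\CA_r}.
\]
Here norms are taken over the closed annulus, in the fixed variable and the remaining variables jointly.

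The claim is proved pointwise in the other variables, and this reduces it to the scalar Parseval computation of Theorem \ref{thm_206}. On $|z|=r$ we write $P^+h=h-P^-h$ and bound the tail by Cauchy–Schwarz together with Parseval on the circle $|z|=r^{-1}$, exactly as in \eqref{eqn_Parseval}. On $|z|=r^{-1}$ the same argument applies with the roles of the circles interchanged. Together these cover both boundary circles, and the maximum principle on the annulus then gives the bound on all of $\CA_r$. For $P^-$ we bound it as $h-P^+h$ and use the analogous sum $\sum_{k\ge0}r^{-4k}$, which gives the constant $1+r^2/\sqrt{r^4-1}$. One must check that this larger constant also covers the circle of radius $r^{-1}$; if it does not, we will bound $P^-$ only on $|z_j|=r^{-1}$, which is all that is needed, since $w_j=z_j^{-1}$ then ranges over $r\T$.

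Finally, we apply the projections coordinate by coordinate. Each application multiplies the sup norm by at most the corresponding constant, and there are $t$ factors of the first kind and $d-t$ of the second. The norm $\|g_\mu\|_{\infty,(r\T)^d}$ is exactly the sup of $\prod_j P_j^{\mu(j)}g$ over points with $|z_j|=r$ when $\mu(j)=1$ and $|z_j|=r^{-1}$ when $\mu(j)=-1$, which yields \eqref{eqn_g2n_esti}.

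The main obstacle is making the iteration legitimate. After the first projection, the resulting function must still be holomorphic near $\CA_r$ in the remaining variables, and its sup over the full annulus, not just one circle, must be controlled so that the next one-variable step can use Parseval on both circles. I would handle this with the maximum principle for vector-valued holomorphic functions on the annulus, as in \cite{Gohberg}. The sup over $\CA_r$ equals the sup over $\partial\A_r$, so it suffices to bound each projection on both boundary circles. This is why the one-variable claim must be verified on both circles rather than only on the one actually used at the end.
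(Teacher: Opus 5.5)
Your proposal is correct, but it is organized differently from the paper. The paper treats each $\mu$ by hand and writes out only $d=1,2$. On $(r\T)^2$ it expresses $g_\mu$ as a one-variable projection of $g$ minus a complementary tail; the projection is bounded by the estimate it cites from Pal--Pascoe. It then splits that tail into a full sum in one index minus a double tail, and bounds these by Cauchy--Schwarz with one- and two-variable Parseval identities. The pieces add up to $P_r^2$, $P_rQ_r$, $P_rQ_r$, $Q_r^2$, where $P_r=1+(r^4-1)^{-1/2}$ and $Q_r=1+r^2(r^4-1)^{-1/2}$. General $d$ is left to ``the same techniques''.

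You instead factor $g_\mu(z_1^{\mu(1)},\dots,z_d^{\mu(d)})=\prod_j P_j^{\mu(j)}g$ into commuting coefficient projections, prove a single scalar estimate pointwise in the remaining variables, and compose. This gives $P_r^tQ_r^{d-t}$ for every $d$ and every $\mu$ at once, using only one-variable Parseval. The holomorphy you worry about is automatic: each intermediate $\prod_{j\in S}P_j^{\mu(j)}g$ is a sub-series of the Laurent series of $g$, so it still converges absolutely and locally uniformly on $A_{s_1,s_2}^d$. The paper's version is fully explicit for each $\mu$. Yours replaces the multi-index tail bookkeeping, which grows with $d$, by a clean induction.

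One step should be stated more carefully. On $|z|=r^{-1}$ you cannot literally redo the tail argument for $P^+h=h-P^-h$ ``with the roles of the circles interchanged''. There $|z^{-k}|=r^{k}$, and $\sum_{k\ge1}|a_{-k}|r^{k}$ is not bounded by Cauchy--Schwarz together with either Parseval identity.

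What settles this, and your ``one must check'' for $P^-$, is the maximum principle: on $\{|z|\le r\}$ for $P^+h$, and on $\{|z|\ge r^{-1}\}\cup\{\infty\}$ for $P^-h$, which vanishes at $\infty$. Each is then bounded on all of $\CA_r$ by its bound on exactly the circle where you computed it. Directly, one also has $|P^+h|\le r^2(r^4-1)^{-1/2}\|h\|_{\infty,\CA_r}\le P_r\|h\|_{\infty,\CA_r}$ on $|z|=r^{-1}$, and $|P^-h|\le(r^4-1)^{-1/2}\|h\|_{\infty,\CA_r}$ on $|z|=r$.

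Moreover, since your bound is pointwise in the other variables, an already-projected variable can simply be frozen on its target circle while the unprojected ones range over $\CA_r$. So your fallback of bounding $P^-$ only on $|z_j|=r^{-1}$ would also have sufficed.
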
	

\begin{proof}
	Let $g$ be a holomorphic map on $\CA_r^d$. The existence of a decomposition of $g$ into $2^d$ functions follows from Proposition 4.1 in \cite{Pal_Pascoe}. We briefly recall it here. By Laurent series representation,
	\begin{align}\label{eqn_gsplit1}
		g(z_1, \dotsc, z_d)=\overset{\infty}{\underset{\nu_1=-\infty}{\sum}}\dotsc \overset{\infty}{\underset{\nu_d=-\infty}{\sum}}a_{\nu_1,\dotsc, \nu_d}z_1^{\nu_1}\dotsc z_d^{\nu_d}=\underset{\mu}{\sum} \ g_{\mu}\left(z_1^{\mu(1)}, \dotsc, z_d^{\mu(d)}\right), 
	\end{align}
	where the sum varies over all functions $\mu: \{1, \dotsc, d\} \to \{1, -1\}$. The functions $g_{\mu}$ are given by 
\[
g_\mu(w_1, \dotsc, w_d)=\overset{\infty}{\underset{\nu_1=\sigma(1)}{\sum}}\dotsc \overset{\infty}{\underset{\nu_d=\sigma(d)}{\sum}}a_{\mu(1)\nu_1,\dotsc, \mu(d)\nu_d}w_1^{\nu_1}\dotsc w_d^{\nu_d}, \ \  \text{where} \ \ 
\sigma(k)= \left\{
\begin{array}{ll}
	0 & \mbox{ if } \; \mu(k)=1 \\
	1 &  \mbox{ if} \; \mu(k)=-1 \\
\end{array} 
\right.
\]
for $1 \leq k \leq d$. Also, the above series converges uniformly and absolutely on the polydisc $(r\overline{\mathbb{D}})^d$ and each $g_\mu$ defines a holomorphic function on $(r\overline{\mathbb{D}})^d$. It only remains to show the inequality as in \eqref{eqn_g2n_esti}. To make the algorithm clear to the readers, we first discuss the case when $d=1$ and then we prove the desired conclusion for $d=2$. The general induction method follows the same techniques. For a holomorphic function $g$ on $\CA_r$, we can write $g(z)=g_1(z)+g_2(1\slash z)$, where $g_1(z)=\overset{\infty}{\underset{n=0}{\sum}}a_nz^n$ and $g_2(z)=\overset{\infty}{\underset{n=1}{\sum}}a_{-n}z^n$. Let $P_r=1+(1\slash \sqrt{r^4-1})$ and $Q_r=1+(r^2\slash \sqrt{r^4-1})$.
For $\xi \in r\T$, an application of Parseval's identity as in Theorem \ref{thm_206} gives that
\begin{align*}
	\left|g_1(\xi)\right|
	&=\left|g(\xi)-\overset{\infty}{\underset{n=1}{\sum}}\frac{a_{-n}}{\xi^n}\right| 
	\leq \|g\|_{\infty, \CA_r}+\left(\overset{\infty}{\underset{n=1}{\sum}}|a_{-n}|^2r^{2n}\right)^{1\slash 2}\left(\overset{\infty}{\underset{n=1}{\sum}}\frac{1}{r^{4n}}\right)^{1\slash 2} \leq  P_r \|g\|_{\infty, \CA_r} \ \text{and} \\
	 \left|g_2(\xi)\right|&=\left|g(1\slash \xi)-\overset{\infty}{\underset{n=0}{\sum}}\frac{a_{n}}{\xi^n}\right| 
	\leq 
	\|g\|_{\infty, \CA_r}+\left(\overset{\infty}{\underset{n=0}{\sum}}|a_{n}|^2r^{2n}\right)^{1\slash 2}\left(\overset{\infty}{\underset{n=0}{\sum}}\frac{1}{r^{4n}}\right)^{1\slash 2}\leq   Q_r\|g\|_{\infty, \CA_r}. 
\end{align*}
This establishes the conclusion for $d=1$. Next, consider the case $d=2$. Let $g$ be a holomorphic function on $\CA_r^2$. Then $g(z_1, z_2)
=g_1(z_1, z_2)+g_2(z_1, z_2^{-1})+g_3(z_1^{-1}, z_2)+g_4(z_1^{-1}, z_2^{-1})$, where 
\begin{align*}
	& g_1(z_1, z_2)=\overset{\infty}{\underset{n=0}{\sum}}\ \overset{\infty}{\underset{m=0}{\sum}}a_{n,m} \ z_1^nz_2^m, \quad \qquad g_2(z_1, w_2)=\overset{\infty}{\underset{n=0}{\sum}}\ \overset{\infty}{\underset{m=1}{\sum}}a_{n,-m}z_1^nw_2^m, \notag \\
	& g_3(w_1, z_2)=\overset{\infty}{\underset{n=1}{\sum}}\ \overset{\infty}{\underset{m=0}{\sum}}a_{-n,m} w_1^nz_2^m, \qquad g_4(w_1, w_2)=\overset{\infty}{\underset{n=1}{\sum}}\ \overset{\infty}{\underset{m=1}{\sum}}a_{-n,-m}w_1^nw_2^m.
\end{align*}
We mention here that the main idea in the case $d=2$ is similar to that of Proposition 3.1 in \cite{Pal_Pascoe}. Therefore, we only describe the modifications needed for the desired estimates. First, we estimate $\|g_1\|_{\infty, r\mathbb{T} \times r\mathbb{T}}$. Let $\xi_1, \xi_2 \in r\T$ and define
\[
b_{-n}=\overset{\infty}{\underset{m=-\infty}{\sum}}a_{-n,m} \xi_2^m \quad \text{and} \quad c_{-m}=\overset{\infty}{\underset{n=-\infty}{\sum}}a_{n, -m} \xi_1^n \quad (n, m \in \mathbb{Z}).
\] 
We have by Equation (12) in \cite{Pal_Pascoe} that 
\begin{align*}
	\left|\overset{\infty}{\underset{n=0}{\sum}} \ \overset{\infty}{\underset{m=-\infty}{\sum}}a_{n,m} \xi_1^n \xi_2^m\right| = \left|g(\xi_1, \xi_2)-\overset{\infty}{\underset{n=1}{\sum}} \ \overset{\infty}{\underset{m=-\infty}{\sum}}a_{-n,m} \frac{\xi_2^m}{\xi_1^n}\right| \leq \|g\|_{\infty, \CA_r^2}\left(1+ \frac{1}{\sqrt{r^4-1}}\right).
\end{align*}
A straightforward calculation gives that
\begin{align*}
	& \quad \left|	\overset{\infty}{\underset{n=0}{\sum}} \ \overset{\infty}{\underset{m=1}{\sum}}\frac{a_{n,-m}}{\xi_2^m}\xi_1^n
	\right|\\
	&=\left|\overset{\infty}{\underset{n=-\infty}{\sum}} \ \overset{\infty}{\underset{m=1}{\sum}}\frac{a_{n,-m}}{\xi_2^m}\xi_1^n-\overset{-1}{\underset{n=-\infty}{\sum}} \ \overset{\infty}{\underset{m=1}{\sum}}\frac{a_{n,-m}}{\xi_2^m}\xi_1^n
	\right| \notag \\
	& \leq 	\sqrt{\left(\overset{\infty}{\underset{m=1}{\sum}}|c_{-m}|^2r^{2m}\right)\left(\overset{\infty}{\underset{m=1}{\sum}}\frac{1}{r^{4m}}\right)}
	+
	\sqrt{\left(\overset{\infty}{\underset{n, m=1}{\sum}}|a_{-n, -m}|^2r^{2n+2m}\right)\left(\overset{\infty}{\underset{n, m=1}{\sum}}\frac{1}{r^{4n+4m}}\right)} \notag \\
	&\leq \|g\|_{\infty, \CA_r^2}\left[\frac{1}{\sqrt{r^4-1}}+\left(\frac{1}{\sqrt{r^4-1}}\right)\left(\frac{1}{\sqrt{r^4-1}}\right)\right] \quad \quad [\text{by Parseval's identity}].	
\end{align*}
Consequently,
\begin{align}\label{eqn_0308}
\|g_1\|_{\infty, r\mathbb{T} \times r\mathbb{T}}=\sup_{\xi_1, \xi_2 \in r\T}\left| \overset{\infty}{\underset{n=0}{\sum}}\ \overset{\infty}{\underset{m=-\infty}{\sum}}a_{n,m} \xi_1^n\xi_2^m-\overset{\infty}{\underset{n=0}{\sum}}  \ \overset{\infty}{\underset{m=1}{\sum}}\frac{a_{n,-m}}{\xi_2^m} \xi_1^n\right| 
	\leq P_r^2\|g\|_{\infty, \CA_r^2}.
\end{align}
Next, we provide an estimate for $\|g_2\|_{\infty, r\mathbb{T} \times r\mathbb{T}}$. Let $\xi_1, \xi_2 \in r\T$  and set
\[
d_{-n}=\overset{\infty}{\underset{m=-\infty}{\sum}}a_{-n,m}\xi_2^{-m} \quad \text{and} \quad e_{-m}=\overset{\infty}{\underset{n=-\infty}{\sum}}a_{n, m} \xi_1^n \quad (n, m \in \mathbb{Z}).
\]  
We have by Equation (17) in \cite{Pal_Pascoe} that
\begin{align*}
	\left|\overset{\infty}{\underset{n=0}{\sum}} \  \overset{\infty}{\underset{m=-\infty}{\sum}}a_{n,-m} \xi_2^{m} \xi_1^n\right|
	\leq \|g\|_{\infty, \CA_r^2}\left(1+ \frac{1}{\sqrt{r^4-1}}\right).
\end{align*}
Also, we have
\begin{align*}
	& \quad \left|	\overset{\infty}{\underset{n=0}{\sum}} \ \overset{\infty}{\underset{m=0}{\sum}}\frac{a_{n,m}\xi_1^n}{\xi_2^m}
	\right|\\
	&=\left|\overset{\infty}{\underset{m=0}{\sum}} \ \overset{\infty}{\underset{n=-\infty}{\sum}}\frac{a_{n,m}\xi_1^n}{\xi_2^m}-\overset{\infty}{\underset{m=0}{\sum}} \ \overset{-1}{\underset{n=-\infty}{\sum}}\frac{a_{n,m}\xi_1^n}{\xi_2^m}
	\right| \notag  \\
	& \leq 	\sqrt{\left(\overset{\infty}{\underset{m=0}{\sum}}|e_{-m}|^2r^{2m}\right)\left(\overset{\infty}{\underset{m=0}{\sum}}\frac{1}{r^{4m}}\right)} 	+
	\sqrt{\left(\sum_{\substack{n=1\\ m=0}}^{\infty}|a_{-n, m}|^2r^{2n+2m}\right)\left(\sum_{\substack{n=1\\ m=0}}^{\infty}\frac{1}{r^{4n+4m}}\right)} \notag \\
	&\leq \|g\|_{\infty, \CA_r^2}\left[\frac{r^2}{\sqrt{r^4-1}}+\left(\frac{r^2}{\sqrt{r^4-1}}\right)\left(\frac{1}{\sqrt{r^4-1}}\right)\right] \quad [\text{by Parseval's identity}].
\end{align*}
Therefore,
\begin{align}\label{eqn_0315}
\|g_2\|_{\infty, r\mathbb{T} \times r\mathbb{T}}=\sup_{\xi_1, \xi_2 \in r\T}\left| \overset{\infty}{\underset{n=0}{\sum}}\ \overset{\infty}{\underset{m=-\infty}{\sum}}a_{n, -m} \xi_1^n\xi_2^m-\overset{\infty}{\underset{n=0}{\sum}}   \ \overset{\infty}{\underset{m=0}{\sum}}\frac{a_{n,m}}{\xi_2^m} \xi_1^n\right| 
	\leq P_rQ_r\|g\|_{\infty, \CA_r^2}.
\end{align}
We now present an estimate for $\|g_3\|_{\infty, r\mathbb{T} \times r\mathbb{T}}$.  Let $\xi_1, \xi_2 \in r\T$ and take 
\[
\omega_{-n}=\overset{\infty}{\underset{m=-\infty}{\sum}}a_{n,m} \xi_2^m \quad \text{and} \quad \gamma_{-m}=\overset{\infty}{\underset{n=-\infty}{\sum}}a_{n, m}\xi_1^{-n} \quad (n, m \in \mathbb{Z}).
\] 
It follows from Equation $(21)$ in \cite{Pal_Pascoe} that
\begin{align*}
	\left|\overset{\infty}{\underset{m=0}{\sum}} \  \overset{\infty}{\underset{n=-\infty}{\sum}}a_{-n, m} \xi_1^{n} \xi_2^m\right|
\leq \|g\|_{\infty, \CA_r^2}\left(1+ \frac{1}{\sqrt{r^4-1}}\right).
\end{align*}
A routine computation gives that
\begin{align*}
	& \quad \left|	\overset{\infty}{\underset{n=0}{\sum}} \ \overset{\infty}{\underset{m=0}{\sum}}\frac{a_{n,m}\xi_2^m}{\xi_1^n}
	\right|\\
	& =\left|\overset{\infty}{\underset{n=0}{\sum}} \ \overset{\infty}{\underset{m=-\infty}{\sum}}\frac{a_{n,m}\xi_2^m}{\xi_1^n}-\overset{\infty}{\underset{n=0}{\sum}} \ \overset{\infty}{\underset{m=1}{\sum}}\frac{a_{n,-m}}{\xi_1^n\xi_2^m}
	\right| \notag  \\
	& \leq 	\sqrt{\left(\overset{\infty}{\underset{n=0}{\sum}}|\omega_{-n}|^2r^{2n}\right)\left(\overset{\infty}{\underset{n=0}{\sum}}\frac{1}{r^{4n}}\right)} 	+
	\sqrt{\left(\sum_{\substack{n=0\\ m=1}}^{\infty}|a_{n, -m}|^2r^{2n+2m}\right)\left(\sum_{\substack{n=0\\ m=1}}^{\infty}\frac{1}{r^{4n+4m}}\right)} \notag \\
		&\leq \|g\|_{\infty, \CA_r^2}\left[\frac{r^2}{\sqrt{r^4-1}}+\left(\frac{r^2}{\sqrt{r^4-1}}\right)\left(\frac{1}{\sqrt{r^4-1}}\right)\right] \quad [\text{by Parseval's identity}].
\end{align*}
Combining the above last two inequalities, we have
\begin{align}\label{eqn_3016}
	\|g_3\|_{\infty, r\mathbb{T} \times r\mathbb{T}}=\sup_{\xi_1, \xi_2 \in r\T}\left| \overset{\infty}{\underset{m=0}{\sum}} \  \overset{\infty}{\underset{n=-\infty}{\sum}}a_{-n, m} \xi_1^{n} \xi_2^m-\overset{\infty}{\underset{n=0}{\sum}} \ \overset{\infty}{\underset{m=0}{\sum}}\frac{a_{n,m}\xi_2^m}{\xi_1^n}\right| \leq P_rQ_r\|g\|_{\infty, \CA_r^2}.
\end{align}
Finally, we prove an estimate for $\|g_4\|_{\infty, r\mathbb{T} \times r\mathbb{T}}$. Let $\xi_1, \xi_2 \in r\T$ and let
\[
\nu_{-n}=\overset{\infty}{\underset{m=-\infty}{\sum}}a_{n,m}\xi_2^{-m} \quad \text{and} \quad \mu_{-m}=\overset{\infty}{\underset{n=-\infty}{\sum}}a_{-n, m} \xi_1^n \quad (n, m \in \mathbb{Z}).
\]  
We have by Equation (25) in \cite{Pal_Pascoe} that
\begin{align*}
	\left|\overset{\infty}{\underset{n=1}{\sum}} \  \overset{\infty}{\underset{m=-\infty}{\sum}}a_{-n, -m} \xi_1^{n} \xi_2^m\right|
\leq \|g\|_{\infty, \CA_r^2}\left(1+ \frac{r^2}{\sqrt{r^4-1}}\right).
\end{align*}
Following the similar arguments as above for estimating the previous norm entities, we have
\begin{align*}
	 & \quad \left|	\overset{\infty}{\underset{n=1}{\sum}} \ \overset{\infty}{\underset{m=0}{\sum}}\frac{a_{-n,m}\xi_1^n}{\xi_2^m}
	\right|\\
	& \leq \left|\overset{\infty}{\underset{m=0}{\sum}} \frac{\mu_{-m}}{\xi_2^m}\right|+\left|\overset{\infty}{\underset{n=0}{\sum}} \ \overset{\infty}{\underset{m=0}{\sum}}\frac{a_{n, -m}}{\xi_1^n\xi_2^m}
	\right| \notag \\
	& \leq 	\sqrt{\left(\overset{\infty}{\underset{m=0}{\sum}}|\mu_{-m}|^2r^{2m}\right)\left(\overset{\infty}{\underset{m=0}{\sum}}\frac{1}{r^{4m}}\right)} +
	\sqrt{\left(\sum_{\substack{n, m=0}}^{\infty}|a_{n, -m}|^2r^{2n+2m}\right)\left(\sum_{\substack{n, m=0}}^{\infty}\frac{1}{r^{4n+4m}}\right)} \notag \\
&\leq \|g\|_{\infty, \CA_r^2}\left[\frac{r^2}{\sqrt{r^4-1}}+\left(\frac{r^2}{\sqrt{r^4-1}}\right)\left(\frac{r^2}{\sqrt{r^4-1}}\right)\right] \quad \quad [\text{by Parseval's identity}].	
\end{align*}
Thus, 
\begin{align}\label{eqn_3020}
	\|g_4\|_{\infty, r\mathbb{T} \times r\mathbb{T}} \leq \sup_{\xi_1, \xi_2 \in r\T} \left|\overset{\infty}{\underset{n=1}{\sum}} \  \overset{\infty}{\underset{m=-\infty}{\sum}}a_{-n, -m} \xi_1^{n} \xi_2^m-\overset{\infty}{\underset{n=1}{\sum}} \ \overset{\infty}{\underset{m=0}{\sum}}\frac{a_{-n,m}\xi_1^n}{\xi_2^m}\right|
	\leq Q_r^2\|g\|_{\infty, \CA_r^2}.
\end{align}
Combining \eqref{eqn_0308}, \eqref{eqn_0315}, \eqref{eqn_3016} and \eqref{eqn_3020}, the desired conclusion follows.
\end{proof}

Proposition \ref{prop_501} provides an explicit upper bound on the supremum norm $\|.\|_{\infty, (r\T)^d}$ of each component $g_\mu$ as in the decomposition \eqref{eqn_g2n}. We mention here that Proposition 4.1 of \cite{Pal_Pascoe} yields
\[
\|g_{\mu}\|_{\infty,(r\T)^d}
\leq
\left(\frac{r^2}{r^2-1}\right)^t
\left(\frac{2r^2-1}{r^2-1}\right)^{d-t}
\|g\|_{\infty,\CA_r^d},
\]
where $t$ is the cardinality of the set $\mu^{-1}(\{1\})$. Note that
\[
\left(1+\frac{1}{\sqrt{r^4-1}}\right) 
\lneq 
\frac{r^2}{r^2-1}
\qquad\text{and}\qquad
\left(1+\frac{r^2}{\sqrt{r^4-1}}\right) 
\lneq 
\frac{2r^2-1}{r^2-1}
\]
for every $r>1$ and so, the estimate in \eqref{eqn_g2n_esti} improves the corresponding bounds from \cite{Pal_Pascoe}. Let us denote by $Q\A_r^{\text{von}}$ the class of all commuting tuples $(T_1, \dotsc, T_d)$ of operators in $Q\A_r$ satisfying the hypothesis of Theorem \ref{thm_502}, namely,
\[
\|f(T_1^{\mu(1)}, \dotsc, T_d^{\mu(d)})\| \leq \|f\|_{\infty, (r\overline{\D})^d}
\]
for every holomorphic function $f$ on $(r\overline{\D})^d$ and every map $\mu: \{1, \dotsc, d\} \to \{1, -1\}$. An application of Proposition \ref{prop_501} gives that $\CA_r^d$ is a $K$-spectral set for tuples in $Q\A_r^{\text{von}}$. Needless to mention, the class $Q\A_r^{\text{von}}$ is non-empty. For example, doubly commuting tuples of operators in $Q\A_r$ belong to $Q\A_r^{\text{von}}$. Holbrook \cite{Holbrook} proved the success of unitary dilation of commuting contractions acting on $\C^2$. Consequently, a re-scaling argument gives that commuting tuples of operators in $Q\A_r$ acting on $\C^2$ belong to $Q\A_r^{\text{von}}$. We conclude this section with the following result.
	
\begin{thm}\label{thm_502}
$\CA_r^d$ is a $K(r, d)$-spectral for commuting tuples of operators in $Q\A_r^{\text{von}}$, where
	\[
	K(r, d)=\left(2+\frac{1+r^2}{\sqrt{r^4-1}}\right)^d.
	\]
\end{thm}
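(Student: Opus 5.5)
The plan is to combine the decomposition of Proposition \ref{prop_501} with the defining inequality of the class $Q\A_r^{\text{von}}$, and then sum the resulting bounds using the binomial theorem.

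Let $(T_1,\dotsc,T_d)\in Q\A_r^{\text{von}}$ and $g\in \text{Rat}(\CA_r^d)$. Since $g$ is holomorphic on a neighbourhood of $\CA_r^d$, Proposition \ref{prop_501} writes
\[
g(z_1,\dotsc,z_d)=\sum_\mu g_\mu\big(z_1^{\mu(1)},\dotsc,z_d^{\mu(d)}\big),
\]
with each $g_\mu$ holomorphic on $(r\overline{\D})^d$.

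First I will check that the functional calculus respects this decomposition:
\[
g(T_1,\dotsc,T_d)=\sum_\mu g_\mu\big(T_1^{\mu(1)},\dotsc,T_d^{\mu(d)}\big).
\]
The Laurent series of $g$ converges absolutely on $\CA_r^d$. Since $\|T_j^{\pm1}\|\le r$, the series $\sum a_{\nu}T_1^{\nu_1}\cdots T_d^{\nu_d}$ converges absolutely in norm, and it equals $g(\underline T)$ by continuity of the rational (Taylor) functional calculus. Regrouping this series according to the signs of $\nu_1,\dotsc,\nu_d$ gives exactly the $2^d$ operator series $g_\mu(T^{\mu})$. Here the commutativity of the $T_j$ is used: it guarantees that $T_1^{\mu(1)},\dotsc,T_d^{\mu(d)}$ commute, so each $g_\mu(T^\mu)$ is well defined. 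The spectrum condition $\sigma_T(\underline T)\subseteq\CA_r^d$ should follow from the spectral mapping property, because $\sigma(T_j)\subseteq \CA_r$ for each $j$ (as $\|T_j\|,\|T_j^{-1}\|\le r$) and the Taylor spectrum projects onto the individual spectra.

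Next I apply the hypothesis defining $Q\A_r^{\text{von}}$ to each term. This gives
\[
\|g_\mu(T^\mu)\|\le \|g_\mu\|_{\infty,(r\overline{\D})^d}.
\]
By the maximum modulus principle on the polydisc (the distinguished boundary suffices), this equals $\|g_\mu\|_{\infty,(r\T)^d}$. I then bound it by the estimate \eqref{eqn_g2n_esti}, namely $P_r^{t}Q_r^{d-t}\|g\|_{\infty,\CA_r^d}$, where $t=|\mu^{-1}(\{1\})|$, $P_r=1+1/\sqrt{r^4-1}$ and $Q_r=1+r^2/\sqrt{r^4-1}$.

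Summing over $\mu$, there are $\binom{d}{t}$ maps $\mu$ with $|\mu^{-1}(\{1\})|=t$. Hence
\[
\|g(\underline T)\|\le \sum_{t=0}^d\binom{d}{t}P_r^tQ_r^{d-t}\|g\|_{\infty,\CA_r^d}=(P_r+Q_r)^d\|g\|_{\infty,\CA_r^d}.
\]
Finally $P_r+Q_r=2+\dfrac{1+r^2}{\sqrt{r^4-1}}$, which is the constant $K(r,d)$ in the statement.

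The main work has already been done in Proposition \ref{prop_501}. The only step needing care here is justifying that the operator series rearrange, so that $g(\underline T)$ genuinely splits as $\sum_\mu g_\mu(T^\mu)$. This relies on absolute convergence of the Laurent series on $\CA_r^d$ together with $\|T_j^{\pm1}\|\le r$, which dominate the operator series term by term.
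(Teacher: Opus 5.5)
Your proposal is correct and follows essentially the same route as the paper. You decompose $g$ via Proposition \ref{prop_501}, apply the defining inequality of $Q\A_r^{\text{von}}$ to each term $g_\mu(T_1^{\mu(1)},\dotsc,T_d^{\mu(d)})$, bound it by \eqref{eqn_g2n_esti}, and sum with the binomial theorem to get $(P_r+Q_r)^d$. You also justify two points the paper's proof leaves implicit: that $\sigma_T(\underline T)\subseteq\CA_r^d$, and that $g(\underline T)=\sum_\mu g_\mu(T_1^{\mu(1)},\dotsc,T_d^{\mu(d)})$, which follows from norm-absolute convergence of the operator Laurent series when $\|T_j^{\pm1}\|\le r$.
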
	
	
\begin{proof}
Let $(T_1, \dotsc, T_d)$ be a commuting tuple of operators in $Q\A_r^{\text{von}}$, and let $\mathfrak{M}_d$ be the class of all functions $\mu: \{1, \dotsc, d\} \to \{1, -1\}$. Then $(T_1^{\mu(1)}, \dotsc, T_d^{\mu(d)})$ satisfies the von Neumann's inequality on $(r\overline{\D})^d$ for all holomorphic functions $f$ on $(r\overline{\D})^d$. For $\mu \in \mathfrak{M}_d$, let $|\mu^{-1}(\{1\})|$ be the cardinality of $\mu^{-1}\{1\}$. We have by Proposition \ref{prop_501} that
	\begin{align*} 
		g(z_1, \dotsc, z_d)=\underset{\mu \in \mathfrak{M}_d}{\sum} \ g_{\mu}\left(z_1^{\mu(1)}, \dotsc, z_d^{\mu(d)}\right) \quad \text{on \ $\CA_r^d$}
	\end{align*}
	such that each $g_{\mu}$ is holomorphic on $(r\overline{\mathbb{D}})^d$. Let $\mu \in \mathfrak{M}_d$. Since each $T_j$ is in $Q\A_r$, it follows that $\|T_j\|, \|T_j^{-1}\| \leq r$ and so, $\sigma_T(T_1^{\mu(1)}, \dotsc, T_d^{\mu(d)}) \subseteq r(\overline{\D})^d$ and so, $g(T_1^{\mu(1)}, \dotsc, T_d^{\mu(d)})$ is a well-defined operator. Consequently, we have that
		\begin{align*}
		& \quad \|g(T_1, \dotsc, T_d)\|\\ 
		& \leq \underset{\mu \in \mathfrak{M}_d}{\sum} \left\|g_{\mu}\left(T_1^{\mu(1)}, \dotsc, T_d^{\mu(d)}\right)\right\| \\
			& \leq \underset{\mu \in \mathfrak{M}_d}{\sum} \left\|g_{\mu}\left(z_1^{\mu(1)}, \dotsc, z_d^{\mu(d)}\right)\right\|_{\infty, (r\T)^d} \quad [\text{by given hypothesis}] \\
		& \leq \left[\underset{\mu \in \mathfrak{M}_d}{\sum} \left(1+\frac{1}{\sqrt{r^4-1}}\right)^{|\mu^{-1}(\{1\})|}\left(1+\frac{r^2}{\sqrt{r^4-1}}\right)^{d-|\mu^{-1}(\{1\})|}\right]\|g\|_{\infty, \CA_r^d}  \quad \text{[by Proposition \ref{prop_501}]}\\
	&=\left[\overset{n}{\underset{t=0}{\sum}}\binom{d}{t}\left(1+\frac{1}{\sqrt{r^4-1}}\right)^{t}\left(1+\frac{r^2}{\sqrt{r^4-1}}\right)^{d-t}\right]\|g\|_{\infty, \CA_r^d}\\
		&=\left(2+\frac{1+r^2}{\sqrt{r^4-1}}\right)^d\|g\|_{\infty, \CA_r^d},
	\end{align*}	
which completes the proof.	
\end{proof}	
	
The case $d=2$	in Theorem \ref{thm_502} corresponds to a commuting pair $(T_1, T_2)$ of operators in $Q\A_r$. For every function $\mu:\{1, 2\} \to \{1, -1\}$, it follows that $(r^{-1}T_1^{\mu(1)}, r^{-1}T_2^{\mu(2)})$ is a commuting pair of contractions and by Ando's dilation theorem (see \cite{Ando}), each pair $(T_1^{\mu(1)}, T_2^{\mu(2)})$ satisfies von Neumann's inequality on $r\overline{\D} \times r\overline{\D}$. Consequently, we have by Theorem \ref{thm_502} that
	\[
	\|g(T_1, T_2)\| \leq \left(2+\frac{1+r^2}{\sqrt{r^4-1}}\right)^2\|g\|_{\infty, \CA_r^2}
	\]
for all $g \in \text{Rat}(\CA_r^2)$ and every commuting pair $(T_1, T_2)$ of operators in $Q\A_r$. This improves Theorem 3.2 of \cite{Pal_Pascoe}, where it was shown that for every $g \in \text{Rat}(\CA_r^2)$,
	\[
\|g(T_1, T_2)\| \leq \left[4+4\left(\frac{r^2+1}{r^2-1}\right)^{1\slash 2}+\left(\frac{r^2+1}{r^2-1}\right)^2 \ \right] \|g\|_{\infty, \CA_r^2}. 
\]

\medskip

	\noindent \textbf{Acknowledgment.} The author is supported through the IIT Bombay RDF Grant of Prof. Sourav Pal with Project Code RI/0115-10001427. The author is grateful to Prof. Sourav Pal for carefully reading the manuscript and for several valuable suggestions.

\end{document}